\documentclass[12pt]{amsart}
\usepackage{color}
\usepackage{amssymb}
\usepackage{comment}
\usepackage{pdfpages}
\usepackage{graphicx}
\usepackage{dcolumn}
\usepackage{algorithmicx}
\usepackage{algorithm,caption}
\usepackage{algpseudocode}
\usepackage{bbm}



\RequirePackage[numbers]{natbib}
\RequirePackage[colorlinks=true, pdfstartview=FitV, linkcolor=blue,
  citecolor=blue, urlcolor=blue]{hyperref}
\usepackage{paralist}

\usepackage{tikz} 
\usepackage{dcolumn}
\usetikzlibrary{arrows,positioning}
\tikzset{
    >=stealth',
    pil/.style={
           ->,
           thick,
           shorten <=2pt,
           shorten >=2pt,}
}



\usepackage{graphicx}
\graphicspath{{./figures/}}
\usepackage{amsfonts}
\usepackage{amsmath}
\usepackage{amsthm}
\usepackage{amssymb}
\usepackage{amsbsy}
\usepackage{epsfig}
\usepackage{fullpage}
\usepackage{natbib, mathrsfs} 
\usepackage{verbatim}
\usepackage[latin1]{inputenc}
\usepackage{mhequ}
\usepackage{upgreek}
\numberwithin{equation}{section}


\def \be{\begin{equs}}
\def \ee{\end{equs}}

\newtheorem{theorem}{Theorem}[section]
\newtheorem{lemma}[theorem]{Lemma}
\newtheorem{rem}[theorem]{Remark}

\newtheorem{assumption}[theorem]{Assumption}

\theoremstyle{plain}
\newtheorem{thm}{Theorem}
\newtheorem*{thm-non}{Theorem}

\theoremstyle{definition}
\newtheorem{defn}[theorem]{Definition}


\begin{document}

\title[Rapid Mixing of Geodesic Walks]{Rapid Mixing of Geodesic Walks on Manifolds with Positive Curvature}


\author{Oren Mangoubi$^{\ddag}$}
\thanks{$^{\ddag}$omangoubi@gmail.com, 
 \'{E}cole Polytechnique F\'{e}d\'{e}rale de Lausanne (EPFL),
 IC IINFCOM THL3, Station 14, 1015 Lausanne, Switzerland}

\author{Aaron Smith$^{\sharp}$}
\thanks{$^{\sharp}$smith.aaron.matthew@gmail.com, 
   Department of Mathematics and Statistics,
University of Ottawa, 585 King Edward Avenue, Ottawa
ON K1N 7N5, Canada}

\maketitle

\begin{abstract}
We introduce a Markov chain for sampling from the uniform distribution on a Riemannian manifold $\mathcal{M}$, which we call the \textit{geodesic walk}. We prove that the mixing time of this walk on any manifold with positive sectional curvature $C_{x}(u,v)$ bounded both above and below by $0 < \mathfrak{m}_{2} \leq C_{x}(u,v) \leq \mathfrak{M}_2 < \infty$ is $\mathcal{O}^*\left(\frac{\mathfrak{M}_2}{\mathfrak{m}_2}\right)$.  In particular, this bound on the mixing time does not depend explicitly on the dimension of the manifold.  In the special case that $\mathcal{M}$ is the boundary of a convex body, we give an explicit and computationally tractable algorithm for approximating the exact geodesic walk. As a consequence, we obtain an algorithm for sampling uniformly from the surface of a convex body that has running time bounded solely in terms of the curvature of the body.
\end{abstract}


\section{Introduction}

Sampling from manifolds has applications to areas such as statistics \cite{bornn2015moment, diaconis2013sampling}, computer graphics \cite{oztireli2010spectral}, optimization \cite{billiards} and systems biology \cite{rhee2014iterative}. For a simple example, manifolds with nonnegative curvature appear in statistical applications as the level sets of log-concave, or more generally quasiconcave, distributions (see Remark \ref{rem:quasiconcave}). Many natural distributions in statistics are quasiconcave or log-concave, including the ``ridge regression" posterior associated with Gaussian priors for logistic regression \cite{durmus2016sampling2, durmus2016sampling}. Existing computer science literature shows that the problem of sampling from manifolds with nonnegative curvature is tightly linked to the problems of sampling from and computing integrals of quasiconcave distributions \cite{Heat, lovasz2007geometry, interior_sampler,  narayanan2008sampling}. The recent paper \cite{diaconis2013sampling} has a broader survey on this sampling problem in various fields.

In this paper we study a simple Markov chain, which we call the \textit{geodesic walk}, that can sample from the uniform distribution of a general Riemannian manifold $\mathcal{M}$. Our main result is that this walk mixes quickly when $\mathcal{M}$ has positive bounded curvature. We also give results concerning an efficient implementation of this walk in the important special case that $\mathcal{M}$ is the boundary of a convex set. 

\begin{rem}\label{rem:quasiconcave}
We give a technical caveat: a level set of a quasiconcave distribution need not be smooth, and in particular may not have well-defined sectional curvature.  However, there is a standard extension of the notion of sectional curvature to this situation, called the Alexandrov curvature. It turns out that the Alexandrov curvature of the level set of a quasiconcave distribution is always nonnegative \cite{alexandrov2005convex}. It is in this sense that we say that the compact level sets of quasiconcave distributions have nonnegative curvature. We now restrict our attention to situations in which the sectional curvature is well-defined, until a brief discussion in Section \ref{sec:discussion}.
\end{rem}

\subsection{Main Results}

The geodesic walk, defined precisely in Algorithm \ref{alg:Geodesic_Walk} below, is a Markov chain $\{X_{i}\}_{i \in \mathbb{N}}$ that is well-defined on any Riemannian manifold $\mathcal{M}$. It evolves by selecting, at each time step $i$, a random tangent vector $U_{i}$ in the tangent space $\mathcal{T}_{X_{i}}$ of $\mathcal{M}$ at $X_{i}$, and then following the associated geodesic for some period of time. The geodesic walk is a natural Markov chain on a manifold, and it is somewhat similar to the well-studied ``ball walk" on a manifold (see \textit{e.g.}  \cite{Ball_Walk}). It is well-known (see Example 7 of \cite{ollivier2009ricci}) that the ball walk mixes rapidly on manifolds with positive curvature. The first main result of this paper, Theorem \ref{thm:mixing}, is that the geodesic walk also mixes rapidly on a manifold with sectional curvature bounded above and below by two constants $0 < \mathfrak{m}_{2} \leq C_{x}(u,v) \leq \mathfrak{M}_2 < \infty$.

Although the ball walk is easy to define and mixes rapidly, it is generally not practical to implement a version of the ball walk with the desired stationary distribution on a computer, and so it does not give rise to practical sampling algorithms (see Example 3C of \cite{diaconis2013sampling}, titled \emph{How not to sample}). In contrast, we show that the geodesic walk automatically has the correct stationary distribution. Our second main result (Theorems \ref{ThmStatExist} and \ref{thm:uniquenessStat}) is that, unlike the ball walk, the geodesic walk has the uniform measure on the manifold as its unique stationary distribution.  

The geodesic walk is also much easier to implement than the ball walk. Our third main result, Theorem \ref{thm:e1}, shows that any reasonably good algorithm for approximating geodesics can be used to simulate an ``approximate" geodesic walk that mixes quickly and has near-uniform stationary measure. We also show that Algorithm \ref{alg:Approx_Manifold}, which is straightforward to implement, gives an efficient ``approximate" geodesic walk in this sense.

We believe our $\mathcal{O}^*\left(\frac{\mathfrak{M}_2}{\mathfrak{m}_2}\right)$ bound on the mixing time of the geodesic walk is the first dimensionless mixing time bound for an implementable Markov chain with uniform stationary distribution on bounded-positive curvature manifolds in general, and convex body boundaries in particular.  One reason our bound does not depend on the dimension is that, in contrast to previously-proposed algorithms, the geodesic walk can take long steps whose length is independent of the dimension.  For comparison, the best existing bound that we know of is given for the ``billiards walk" studied in \cite{billiards}, which is shown to have mixing time of $\mathcal{O}^*\left(d^2\cdot \frac{\mathfrak{M}_2}{\mathfrak{m}_2}\right)$  (See Remark \ref{rem:billiards}).
That walk cannot mix quickly in high dimensions, even if $\mathcal{M}$ is the surface of a sphere, because the typical step size is $\mathcal{O}(\frac{1}{\sqrt{d}})$.

In both statistics and computer science, one of the most important and well-studied special cases occurs when $\mathcal{M}$ is the boundary $\partial \mathcal{K}$ of a convex body $\mathcal{K} \subset \mathbb{R}^{d+1}$. There are many Markov chains for sampling from the uniform distribution on the boundary $\partial \mathcal{K}$ of a set under various assumptions (see \textit{e.g.} \cite{belisle1993hit}, \cite{comets2009billiards}, \cite{billiards}), all of which involve a walk that moves inside of $ \mathcal{K}$ and then reflects off of $\partial \mathcal{K}$.  In Section \ref{sec:approximating}, we explain how our geodesic walk can be well-approximated by such a ``reflecting walk," if the reflecting walk is allowed to have long-term momentum. Our final main result, Theorem \ref{thm:mixing_convex}, shows that this approximate version of the geodesic walk gives us an algorithm for sampling uniformly from the boundary $\partial  \mathcal{K}$ of a convex set $\mathcal{K}$ with arbitrarily small error $\epsilon > 0$ in roughly $\mathcal{O}^*\left((\frac{\mathfrak{M}_{2}}{\mathfrak{m}_{2}})^{3} \epsilon^{-1} \right)$ reflections, provided that $\mathcal{M}$ has inner and outer radii of curvature bounded below and above by $\frac{1}{\sqrt{\mathfrak{M}_2}}$ and $\frac{1}{\sqrt{\mathfrak{m}_2}}$, respectively. We believe the $\mathcal{O}^* \left((\frac{\mathfrak{M}_{2}}{\mathfrak{m}_{2}}) ^{3}\right)$ bound for the implementation of the geodesic walk is often very pessimistic and can be greatly improved in some important special cases (See Remark \ref{rem:smooth} in Section \ref{sec:Approx_Convex}). Furthermore, in situations where one has access to better geodesic approximations, such as when symplectic integrators or exact geodesic integrators are available, our results imply computational costs that are much closer to the mixing time $\mathcal{O}^*\left(\frac{\mathfrak{M}_2}{\mathfrak{m}_2}\right)$ of the true geodesic walk.

\begin{rem} \label{rem:billiards}
Note that the bound in \cite{billiards} is better for chains with a bound on the diameter of $\mathcal{K}$ that is much smaller than $\frac{2}{\sqrt{\mathfrak{m}_2}}$, while our bound is better in higher dimensions and when the bound on the inner radius of curvature of $\mathcal{K}$ is much smaller than $\frac{1}{\sqrt{\mathfrak{M}_2}}$. Also, a practical implementation of the geodesic walk requires additional computational costs that depend on the method used to approximate geodesics.
\end{rem}

\subsection{Proof Techniques}
The main component of our proof is a coupling argument, where we couple the initial velocity by parallel transport (Section \ref{sec:coupling}).  We then use comparison theorems from differential geometry \cite{abresch1997injectivity, cheeger2008comparison, rauch1951contribution} to show that our assumptions of positive curvature bounds imply that the distance between the two chains contracts over each step in the Markov chain (Section \ref{sec:contraction}).  This contraction estimate immediately implies a bound on the Wasserstein mixing time and other relevant quantities \cite{ollivier2009ricci} (Section \ref{sec:mixing}).  

\begin{rem} [Short and Long Steps]
The geodesic walk has an important tuning parameter: the amount of time $T$ to follow the geodesic. If $T$ is very small, then we can couple two copies $\{X_{i},Y_{i}\}_{i \in \mathbb{N}}$ via parallel transport of the momentum vector $U_{i}$ used at each step (see Definition \ref{DefCouplingConstruction} for a precise definition) to obtain the following contraction estimate (see Proposition 6 of \cite{ollivier2009ricci}):
\begin{equation*}
d(X_{i+1},Y_{i+1}) \leq (1 - \frac{T^{2}}{2} \mathfrak{m}_{2} + O(T^{3} + T^{2} d(X_{i},Y_{i}))) \, d(X_{i},Y_{i}).
\end{equation*}

In other words, for very small step size $T$, a lower bound on the sectional curvature implies that the geodesic walk contracts. This bound does not (directly) imply useful bounds for the geodesic walk with $T$ large, since the error term in this inequality may become very large.
\end{rem}

After studying the mixing properties of the geodesic walk, we study a possible implementation. Using comparison bounds for Markov chains, we show that one can  approximate geodesic trajectories with arbitrary accuracy in a dimension-independent number of steps provided one has access to an appropriate oracle, allowing one to generate samples from a stationary distribution that is arbitrarily close to uniform (in Wasserstein transportation distance) in a dimension-independent number of oracle calls.  We also show how to construct such an oracle in the special case where $\mathcal{M}$ is the boundary of a convex body (Section \ref{sec:approximating}). In the same special case, we explain how to use pre- and post-processing steps to replace a general manifold $\mathcal{M}$ with a manifold $\mathcal{M}'$ with sectional curvature bounded by $\mathcal{O}(d^{2})$ (Section \ref{sec:discussion}), allowing us to sample efficiently even from ``pointy" manifolds that have very high sectional curvature at some locations.

We mention that \cite{tat2016geodesic} obtains a bound on the mixing time of a  related (but different) geodesic walk. Although our Markov chains are somewhat similar, our settings are quite different: their bounds require $\mathcal{M}$ to be the boundary of a convex polytope, and are best when the polytope has few faces; our results require $\mathcal{M}$ to have curvature that is bounded away from both 0 and infinity. We also note that the mixing time bound for the geodesic walk in \cite{tat2016geodesic} is not dimension-independent.   A final difference is that the authors of \cite{tat2016geodesic} state that their proof follows the conductance-based approach that is popular for analyzing geometric random walks \cite{vempala2005geometric}, while our proof uses a probabilistic coupling argument as introduced in \cite{dobrushin70contraction}. See recent work such as \cite{ollivier2009ricci}, \cite{ollivier2011visual} for more on the relationship between couplings of Markov chains and curvature, and also work such as \cite{joulin2010curvature}, \cite{paulin2015concentration} for some consequences of the existence of such couplings, including various concentration inequalities.

\subsection{List of Sections} 
The rest of the paper is arranged as follows:
\begin{itemize}
\item In Section \ref{sec:assumptions} we go over the assumptions we make about $\mathcal{M}$ and Riemannian geometry preliminaries.
\item In Section \ref{sec:Walk} we define the geodesic walk on the manifold $\mathcal{M}$.
\item In Section \ref{sec:Stationary}, we prove that the stationary distribution of the geodesic walk is uniform on $\mathcal{M}$.
\item In Section \ref{sec:coupling} we define a coupling of two copies of the geodesic walk.
\item In Section \ref{sec:contraction} we prove the contraction bound described above.
\item In Section \ref{sec:mixing} we use this contraction bound to prove a bound on the mixing time.
\item In Section \ref{sec:approximating} we show that one can computationally approximate geodesic trajectories to sample the uniform distribution on $\mathcal{M}$ to arbitrary accuracy in a dimension-independent number of steps if one has access to an appropriate oracle (Section \ref{sec:Approx_Manifold}).  We construct such an oracle explicitly in the special case where $\mathcal{M}$ is the boundary of a convex body (Section \ref{sec:Approx_Convex}). 
\item In Section \ref{sec:discussion}, we discuss some future research directions left open by this paper:  In section \ref{sec:discussion_polytope} we give pre- and post-processing steps that allow us to assume the target manifold $\mathcal{M}$ satisfies $\mathfrak{M}_{2} = \mathcal{O}(d^{2})$. This is useful when one wishes to sample from a convex body $\mathcal{K}$ whose boundary does not have bounded curvature, such as the case that $\mathcal{K}$ is a convex polytope.  In section \ref{sec:discussion_HMC} we discuss connections between this paper and recent work \cite{HMC_logconcave} on the mixing times of the popular Hamiltonian Monte Carlo algorithm.
\end{itemize}

\section{Assumptions and Riemannian geometry preliminaries}\label{sec:assumptions}
In this section we recall results from Riemannian geometry and introduce definitions and assumptions that we will use in the rest of the paper.

\begin{itemize}
\item \textbf{Assumptions about our manifold $\mathcal{M}$:} Throughout this paper we will assume that, 
\begin{enumerate}
\item $(\mathcal{M},g)$ is a closed, connected, second-order differentiable Riemannian manifold, with associated inner product $g \equiv g_x$ on the tangent space $\mathcal{T}_x \equiv \mathcal{T}_x \mathcal{M}$ at $x \in \mathcal{M}$. 
\item$(\mathcal{M},g)$ has bounded positive sectional curvature. That is, there exist constants $0 < \mathfrak{m}_2  \leq \mathfrak{M}_2 < \infty$ so that, for all $x\in \mathcal{M}$ and $u, v\in \mathcal{T}_x$, \begin{equation}
\mathfrak{m}_2 \leq C_x(u,v) \leq \mathfrak{M}_{2},
\end{equation}
where $C_x(u,v)$ is the sectional curvature of $\mathcal{M}$ at $x$ in the directions $\frac{u}{\|u\|}$ and $\frac{v}{\|v\|}$.
\end{enumerate}

\item \textbf{Levi-Civita connection:}  The fundamental theorem of Riemannian geometry guarantees the existence and uniqueness of a torsion-free affine connection $\nabla$ on $(\mathcal{M},g)$ that induces an isometry of tangent spaces via parallel transport (see Theorem 6.8 of \cite{RiemannianGeometryNotes}); this connection is called the \textit{Levi-Civita connection}.

\item \textbf{Paths and Metrics:} Recall that the length of a segment of a smooth path $h \, : \, \mathbb{R}^{+} \mapsto \mathcal{M}$ is given by
\begin{equation}
\mathrm{length}(h[a,b]) = \int_a^b \sqrt{g_{h(t)}(h'(t), h'(t))}\mathrm{d}t.
\end{equation}

This gives rise to the metric 
\begin{equation}
\mathrm{dist}(x,y) = \inf_{h \, : \, h(0) = x, \, h(1) = y} \mathrm{length}(h[0,1])
\end{equation}
on $\mathcal{M}$.

\item \textbf{Curvatures:} Recall that the \textit{Riemann curvature tensor} $R$ at a point $x \in \mathcal{M}$ sends a pair of vectors $u,v \in \mathcal{T}_{x}$ to a map $R(u,v) \, : \, \mathcal{T}_{x} \mapsto \mathcal{T}_{x}$ of the tangent space at $x$ to itself, which transforms an element $w \in \mathcal{T}_{x}$ via the formula
\begin{equation*}
R(u,v)[w] = \nabla_{u} \nabla_{v} w - \nabla_{v} \nabla_{u} w  - \nabla_{[u,v]} w, 
\end{equation*} 
where $\nabla$ is the Levi-Civita connection described above and $[\cdot,\cdot]$ is the Lie bracket. This formula allows us to define the \textit{sectional curvature} $R' \, : \, \mathcal{T}_{x}^{2} \mapsto \mathbb{R}$ by 
\begin{equation} \label{EqDefSecCurve}
R'(u,v) = \frac{\langle R(u,v)v, u \rangle}{\langle u,u \rangle \, \langle v,v \rangle - \langle u, v \rangle^{2}},
\end{equation} 
where $\langle \cdot, \cdot \rangle = \langle \cdot, \cdot \rangle_{g}$ is the usual inner product on $\mathcal{T}_{x}$ associated with a Riemannian manifold $(\mathcal{M},g)$.

\item \textbf{Uniform Measures:} We recall two measures, one on the manifold $\mathcal{M}$ and the other on small subsets of the tangent spaces $\mathcal{T}_{x}$, that we will call the \textit{uniform} measures on their associated spaces. First, recall that any Riemannian manifold $(\mathcal{M},g)$ has an associated volume function $\lambda$. If $\lambda(\mathcal{M}) < \infty$, we denote by $\mathrm{Unif}(\mathcal{M})$ the probability measure given by
\begin{equation}
\mathrm{Unif}(\mathcal{M})[A] = \frac{\lambda(A)}{\lambda(\mathcal{M})}
\end{equation}
for $A \subset \mathcal{M}$ measurable. We refer to this as the \textit{uniform measure on $\mathcal{M}$}.

Recall that any choice of the basis $\mathcal{B}$ of the tangent space $\mathcal{T}_{x}$ of a $d$-dimensional manifold $\mathcal{M}$ at a point $x \in \mathcal{M}$ gives a natural isometry $\zeta_{x} =  \zeta_{x, \mathcal{B}} \, : \, \mathcal{T}_{x} \mapsto \mathbb{R}^{d}$. Furthermore, the pull-back of the Lebesgue measure from the unit sphere $\mathbb{S}^{d-1} = \{ x \in \mathbb{R}^{d} \, : \, \| x \|_{2} = 1 \} \subset \mathbb{R}^{d}$ to the set $\mathbb{S}(\mathcal{T}_{x}) = \zeta_{x}^{-1}(\mathbb{S})$ given by $\zeta_{x}$ does not depend on the choice of basis $\mathcal{B}$. We denote by $\mathrm{Unif}(\mathbb{S}(\mathcal{T}_{x}))$ or $\mu$ this unique pull-back of the Lebesgue measure by $\zeta_{x}$, and refer to this measure as the \textit{uniform measure on the unit sphere in $\mathcal{T}_{x}$}. 

Throughout the paper, if $S_{1} \subset S_{2}$ and we have defined a uniform measure on $S_{2}$, we define the uniform measure on $S_{1}$ by
\begin{equation*} 
\mathrm{Unif}(S_{1})[A] = \frac{\mathrm{Unif}(S_{2})[A]}{\mathrm{Unif}(S_{2})[S_{1}]}.
\end{equation*}

\item \textbf{Geodesics:} Define the phase space
\begin{equation} \label{EqPhaseSpaceDef}
\mathcal{M}^\circ := \{(x,v): x \in \mathcal{M}, v \in \mathbb{S}(\mathcal{T}_{x}) \}.
\end{equation}

Associated to every element $(x,v) \in \mathcal{M}^{\circ}$, there is a special path $\gamma_{(x,v)} \, : \, \mathbb{R}^{+} \rightarrow \mathcal{M}$ with $\gamma_{(x,v)}(0) = x$ and $\gamma'_{(x,v)}(0) = v$ that is called the \textit{geodesic}. Roughly speaking, this is the path obtained by starting at point $x$ and traveling with velocity $v$ along the manifold. We will use the following properties of the geodesic \cite{petersen2006riemannian}:
\begin{enumerate}
\item For all $x \in \mathcal{M}$ and $v \in \mathcal{T}_{x}$, there exists $\epsilon = \epsilon(x,v) > 0$ so that 
\begin{equation*} 
\mathrm{length}(\gamma_{(x,v)}[0,t]) = \mathrm{dist}(\gamma_{(x,v)}(0), \gamma_{(x,v)}(t))
\end{equation*} 
for all $0 < t < \epsilon$ (see Section 5 and Exercise 5.9.34 of \cite{petersen2006riemannian}).
\item 
Associated to every smooth map $\tau \, : \, \mathbb{R}^{+} \mapsto \mathcal{M}^{\circ}$, there is an object called the \textit{Jacobi field} 
\begin{equation}
J_\tau(t):= \frac{\partial \gamma_\tau(t)}{\partial \tau}
\end{equation}
that satisfies the Jacobi second-order ordinary differential equation:
\begin{equation}
\frac{\mathrm{D}^2}{\mathrm{d}t^2} J_\tau(t) + R\left(J_\tau(t), \gamma'_\tau(t)\right)\gamma'_\tau(t) = 0,
\end{equation}
where $R$ denotes the Riemann curvature tensor of $\mathcal{M}$ and $\mathrm{D}$ is the covariant derivative with respect to the Levi-Civita connection (See Section 3.2.4 of \cite{petersen2006riemannian}).
\end{enumerate}

\item \textbf{Diameter:} We define $D := \mathrm{Diam}(\mathcal{M})$ to be the diameter of $\mathcal{M}$.  Since the sectional curvature of $\mathcal{M}$ is bounded below by $\mathfrak{m}_2$, the Myers-Bonnet theorem \cite{myers1941riemannian} implies that
\begin{equation} \label{IneqDiameterMyersBonnet}
D \leq \frac{\uppi}{\sqrt{\mathfrak{m}_2}}.
\end{equation}

\item \textbf{Existence of a \emph{unique} parallel transport:} Associated with any \textit{connection} on a manifold and any finite-length path $h \, : \, [a,b] \mapsto \mathcal{M}$ there exists a map from $\mathcal{T}_{h(a)}$ to $\mathcal{T}_{h(b)}$, called the \textit{parallel transport} map. 

We now define the only parallel transport maps that we will use in this paper. The Hopf-Rinow theorem \cite{hopf1931begriff} implies that between any pair of points $x,y \in \mathcal{M}$ there exists at least one unit-speed geodesic $\omega(t) \equiv \omega(t;x,y)$, $t\in [0,\mathrm{dist}(x,y)]$ with $\omega(0;x,y) = x$ and $\omega(\mathrm{dist}(x,y); x,y) = y$ with $\omega(t;x,y) = \omega(\mathrm{dist}(x,y)-t;y,x)$. We choose arbitrarily, via the axiom of choice, a particular family $\{ \omega(\cdot; x,y) \}_{(x,y) \in \mathcal{M}\times \mathcal{M}}$ of such geodesic paths. Finally, we define $\phi(t;\cdot;x,y) \, : \, \mathcal{T}_{x} \mapsto \mathcal{T}_{\omega(t;x,y)}$ to be the parallel transport map associated with the Levi-Civita connection and the path $\omega(\cdot; x,y)$. We also write $\phi(\cdot;x,y) \equiv \phi(\mathrm{dist}(x,y);\cdot;x,y)$ for shorthand.

Finally, fix an arbitrary point $x \in \mathcal{M}$ and an arbitrary basis $\{b_{i}\}$ of $\mathcal{T}_{x}$. Throughout the paper, we denote by $\{ \zeta_{y} \}_{y \in \mathcal{M}}$ the maps $\zeta_{y, \{ \phi(b_{i}; x,y) \}} \, : \, \mathcal{T}_{y} \mapsto \mathbb{R}^{d}$ associated with this basis and its parallel transports by $\phi$.

\item \textbf{Wasserstein distance and mixing time:} For any distribution $\eta$, we write $X \sim \eta$ when the random variable $X$ has distribution $\eta$. For two distributions $\eta,\nu$ on a common measure space $(\Omega,\mathcal{A})$, define $\Xi(\eta,\nu)$ to be the collection of all distributions $\xi$ on $(\Omega^{2}, \mathcal{A} \times \mathcal{A})$ with marginal distributions $\xi(\cdot,\Omega) = \eta(\cdot)$, $\xi(\Omega,\cdot) = \nu(\cdot)$. The Wasserstein transportation distance $W_{d}$ between two measures $\eta$ and $\nu$ on a common metric measure space $(\Omega,d)$ is given by
\[W_{d}(\eta, \nu) = \inf_{(X,Y) \, : \, (X,Y) \in \Xi(\eta,\nu)} \mathbb{E}[d(X,Y)].\]

Consider a transition kernel $K$ on a metric measure space $(\Omega,d)$, with unique stationary distribution $\xi$. We define the Wasserstein mixing profile $\mathfrak{t}_\mathrm{mix} \, : \, [0,1] \mapsto \mathbb{N} $ of $K$ to be
 \[\mathfrak{t}_\mathrm{mix}(\epsilon) = \inf \{\mathfrak{t} \geq 0 \, : \, \sup_{x \in \Omega} \, W_{d}(K^{\mathfrak{t}}(x,\cdot) - \xi(\cdot)) < \epsilon \}. \]

\item \textbf{Big-O Notation:} In this paper we use the ``big-$\mathcal{O}$" notation.  Specifically, for any two functions $f:\mathbb{R} \rightarrow \mathbb{R}$ and $g:\mathbb{R} \rightarrow \mathbb{R}$, we write ``$f(z) = \mathcal{O}(g(z))$" or ``$f(z) \leq \mathcal{O}(g(z))$" if there exists a constant $0<C<\infty$ and some $Z>0$ such that
\[f(z) \leq C \cdot g(z)\]
for all $z>Z$.
Furthermore, write ``$f(z) = \mathcal{O}^*(g(z))$" or ``$f(z) \leq \mathcal{O}^*(g(z))$" if there exist constants $0 < c',C' <\infty$ and some $Z'>0$ such that
\[f(z) \leq C' \cdot g(z)\log(z)^{c'}\]
for all $z>Z'$.  I.e., the ``big-$\mathcal{O}^*$" notation suppresses the logarithmic terms that we would otherwise need to take into account when using the ``big-$\mathcal{O}$" notation.

\item \textbf{Angles:} For two vectors $v_{1}, v_{2} \in \mathbb{R}^{d}$, we denote by $\measuredangle (v_{1},v_{2})$ the angle between the vectors.  For any hyperplanes $\mathbbm{n}_1^\perp, \mathbbm{n}_2^\perp$ with normal vectors $\mathbbm{n}_1$ and $\mathbbm{n}_2$, respectively, we define
\be
\measuredangle(\mathbbm{n}_1^\perp, \mathbbm{n}_2^\perp):= \measuredangle(\mathbbm{n}_1, \mathbbm{n}_2).
\ee
For a plane $P$ of any dimension and any vector $v$, the angle $\measuredangle (v,P)$ between $v$ and $P$ is defined as the angle between $v$ and its projection $\mathrm{proj}_Pv$ onto $P$:
\be
\measuredangle (v,P) \equiv \measuredangle (P,v) := \measuredangle(v, \mathrm{proj}_Pv).
\ee
For any hyperplane $\mathbbm{n}^\perp$ with normal vector $\mathbbm{n}$, we define the angle between $\mathbbm{n}^\perp$ and a plane $P$ of any dimension as
\be
\measuredangle (\mathbbm{n}^\perp,P) \equiv \measuredangle (P, \mathbbm{n}^\perp) := \frac{\pi}{2}- \measuredangle(\mathbbm{n},P).
\ee

Finally, we note that since $\measuredangle \left(v, \mathrm{proj}_Pv\right)= \min_{w \in P} \measuredangle(v, w)$, we have that
\be \label{eq:projection_minimizes_angle}
\measuredangle \left(v, \mathrm{proj}_Pv\right) \leq \measuredangle \left(v, u\right)
\ee
 for every vector $u \in P$.

\end{itemize}
\section{The geodesic walk}\label{sec:Walk}

 The geodesic walk $\{ X_{i} \}_{i \in \mathbb{N}}$ on a manifold (Figure \ref{fig:Walk}), with a fixed geodesic step size T is defined precisely in Algorithm \ref{alg:Geodesic_Walk}:

\begin{algorithm}[H]
\caption{Geodesic Walk \label{alg:Geodesic_Walk}}
\flushleft
\textbf{parameters}: Integration time $T$, manifold $(\mathcal{M},g)$, number of steps $N$.\\
\textbf{input:}  $X_1 \in \mathcal{M}$.\\
\textbf{output:} First $N$ steps $\{X_{i}\}_{i=1}^{N}$ of the geodesic walk Markov chain on $\mathcal{M}$.
\begin{algorithmic}[1]
\For{$i = 1, 2, \ldots, N-1 $}
\\ Sample $U_i \in \mathcal{T}_{X_{i}}$ from the uniform distribution $\mathrm{Unif}(\mathbb{S}(\mathcal{T}_{X_i}))$ on the unit sphere in $\mathcal{T}_{X_i}$.
\\ Set $X_{i+1} = \gamma_{(X_i,U_i)}(T)$.
\EndFor
\end{algorithmic}
\end{algorithm}

That is, the point $X_{i+1}$ is generated from $X_{i}$ by running a geodesic trajectory with initial conditions $(X_i,U_i)$ for a fixed time $T$.

We define the transition kernel $\mathbb{K}$ of the geodesic walk Markov chain $\{X_{i}\}_{i \in \mathbb{N}}$ by
\begin{equation} \label{DefTransKernel}
\mathbb{K}(Q,R) := \mathbb{P}[X_{i+1} \in R | X_i \sim \mathrm{Unif}(Q)]
\end{equation}
for all measurable subsets $Q,R \subset \mathcal{M}$.

For the geodesic walk $x = X_1, X_2, \ldots$, we define $\Pi_{i}(\cdot) = \mathbb{K}^{i}(x,\cdot)$ to be the distribution of $X_i$, and $\Pi = \lim_{i \rightarrow \infty} \Pi_{i}$ to be the stationary distribution of the geodesic walk when it exists.  In Section \ref{sec:Stationary} we will prove that the uniform measure on $\mathcal{M}$ is a stationary distribution of the geodesic walk (later, we show that this is the only stationary distribution).

We note that, in every step of Algorithm \ref{alg:Geodesic_Walk}, a random variable $U_{i}$ was constructed. It is straightforward to see that the sequence $\{ (X_{i}, U_{i}) \}_{i \in \mathbb{N}}$ is a Markov chain on $\mathcal{M}^{\circ}$, which we will call the \textit{phase-space} Markov chain. 

\begin{rem}
Although we write down a fixed integration time $T$, the algorithm still has the correct stationary distribution if i.i.d. random integration times $T_{1},T_{2},\ldots$ are sampled at each step (the proof of Theorem \ref{ThmStatExist} works essentially as written). In practice it may be difficult to choose a specific integration time, and we suspect that tricks from the literature on Hamiltonian Monte Carlo can be adapted to this algorithm (see \textit{e.g.} \cite{hoffman2014no}).
\end{rem}

In referring to the geodesic walk as an ``algorithm", we will assume that we have an oracle for computing geodesic trajectories $\gamma$ with perfect accuracy; we refer to this oracle as an ``idealized geodesic integrator." We drop this assumption in Section \ref{sec:approximating}, where we discuss a computational implementation of the geodesic walk.

\begin{figure}[t]
\begin{center}
\includegraphics[scale=0.4]{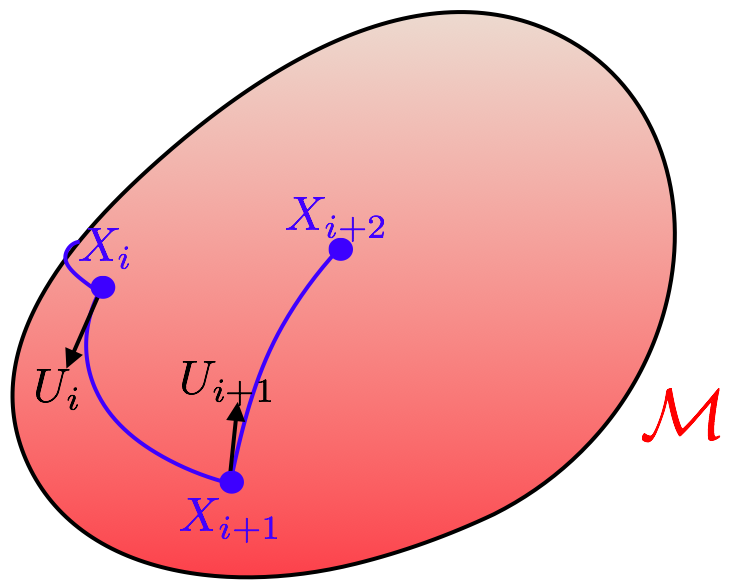}
\end{center}
\caption{The geodesic walk $X_1, X_2, \ldots$ (blue).  Given the current point $X_i$, the next point is generated by independently sampling a uniform random velocity $U_i$ from the unit sphere on the tangent space at $X_i$, and running a geodesic trajectory with initial conditions $(X_i,U_i)$ for a fixed time $T$. \label{fig:Walk}}
\end{figure}

\section{The stationary distribution of the geodesic walk}\label{sec:Stationary}

The aim of this section is to prove that the transition kernel $\mathbb{K}$ defined in Equation \ref{DefTransKernel} has uniform stationary distribution.  To do so we  will use the fact that the Liouville measure is invariant under geodesic flow \cite{Liouville_Invariant3, Liouville_Invariant1, Liouville_Invariant2}. 

Recall the definition of the phase space $\mathcal{M}^{\circ}$ in Equation \ref{EqPhaseSpaceDef}. The Liouville measure $\mathcal{L}$ is defined to be the measure on $\mathcal{M}^{\circ}$ with density given by the product of the volume form $\mathrm{d}\lambda(x)$ on the manifold and the volume form $\mathrm{d}\mu(v)$ on the unit sphere in the tangent space of the manifold at the point $x$ (i.e., the unit-speed velocities):
\begin{equation}
\mathcal{L}(\Omega):= \int_\Omega \mathrm{d}\mu(v) \mathrm{d}\lambda(x).
\end{equation}

Consider a geodesic trajectory on $\mathcal{M}$ with initial position $x$ and initial velocity $v$.  We define $\gamma_{(x,v)}(t)$ and $
\varphi_{(x,v)}(t) := \gamma_{(x,v)}'(t) \in \mathcal{T}_{\gamma_{(x,v)}(t)}$ to be, respectively, the position and velocity of this trajectory at time $t \in \mathbb{R}^{+}$.  Finally, we define
\[\psi_{(x,v)}(t) := (\gamma_{(x,v)}(t), \varphi_{(x,v)}(t))\]
to be the location of our geodesic trajectory in the phase space at time $t \in \mathbb{R}^{+}$.

For notational convenience, we define the map
\begin{equation}
\Psi_t(A) :=  \cup_{(x,v) \in A} \psi_{(x,v)}(t)
\end{equation}
for all $A \subset \mathcal{M}^\circ$ and $t>0$.

We now prove that the volume measure on any closed manifold $\mathcal{M}$ is a stationary measure of the geodesic walk (Theorem \ref{ThmStatExist}); this holds even if $\mathcal{M}$ does not have positive curvature.  In Theorem \ref{thm:uniquenessStat} of Section \ref{sec:contraction} we prove that the stationary distribution is unique under the additional assumption that $\mathcal{M}$ has bounded positive curvature.

\begin{thm}\label{ThmStatExist}
  The uniform distribution on $\mathcal{M}$ is a stationary distribution of the geodesic walk Markov chain defined in Algorithm \ref{alg:Geodesic_Walk}.
\end{thm}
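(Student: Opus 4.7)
The plan is to lift everything to phase space $\mathcal{M}^{\circ}$ and exploit the fact, cited in the excerpt from \cite{Liouville_Invariant1, Liouville_Invariant2, Liouville_Invariant3}, that the Liouville measure $\mathcal{L}$ is preserved by the geodesic flow map $\Psi_t$. One step of Algorithm \ref{alg:Geodesic_Walk} decomposes naturally into two substeps on the phase-space chain $\{(X_i, U_i)\}_{i \in \mathbb{N}}$: first, the velocity is re-randomized uniformly on $\mathbb{S}(\mathcal{T}_{X_i})$ independently of anything prior; second, the map $\Psi_T$ is applied to $(X_i, U_i)$. I will show that the normalized Liouville measure $\bar{\mathcal{L}} := \mathcal{L}/\mathcal{L}(\mathcal{M}^{\circ})$ (which is finite since $\mathcal{M}$ is closed) is preserved by both substeps, and that its marginal in the position coordinate is exactly $\mathrm{Unif}(\mathcal{M})$.

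Concretely, I would first verify that if $X_i \sim \mathrm{Unif}(\mathcal{M})$ and, conditionally on $X_i$, $U_i \sim \mathrm{Unif}(\mathbb{S}(\mathcal{T}_{X_i}))$, then $(X_i, U_i) \sim \bar{\mathcal{L}}$; this is immediate from the definition of $\mathcal{L}$ as the product of the volume form $\mathrm{d}\lambda(x)$ on $\mathcal{M}$ and the spherical form $\mathrm{d}\mu(v)$ on each fiber. Next I invoke the cited Liouville-invariance result to conclude that
\begin{equation*}
(X_{i+1}, V_{i+1}) := \Psi_T(X_i, U_i) \sim \bar{\mathcal{L}},
\end{equation*}
where $V_{i+1} = \gamma'_{(X_i, U_i)}(T)$. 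Projecting onto the first coordinate gives $X_{i+1} \sim \mathrm{Unif}(\mathcal{M})$, which is exactly the statement that $\mathrm{Unif}(\mathcal{M})$ is preserved by the transition kernel $\mathbb{K}$ defined in Equation \ref{DefTransKernel}.

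The only step that requires any care is the bookkeeping that identifies $\mathbb{K}$, which is stated on $\mathcal{M}$, with the marginalization over the velocity coordinate of the lifted chain on $\mathcal{M}^{\circ}$. This is routine: the distribution of $X_{i+1}$ given $X_i$ depends only on the position output of $\Psi_T$, so integrating out $V_{i+1}$ in the phase-space identity $(X_{i+1}, V_{i+1}) \sim \bar{\mathcal{L}}$ yields the desired statement on $\mathcal{M}$. No deep obstacle arises here; the theorem is essentially a bookkeeping exercise once one separates the ``refresh velocity'' and ``flow'' substeps and applies Liouville invariance to the latter. I would note, for emphasis, that this argument uses nothing about the sign of the sectional curvature, consistent with the excerpt's remark that Theorem \ref{ThmStatExist} holds on any closed manifold.
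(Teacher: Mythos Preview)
Your proposal is correct and follows essentially the same approach as the paper: lift to phase space, decompose a step of the chain into the velocity-refresh substep and the geodesic-flow substep, use Liouville invariance for the flow substep and the product structure of $\mathcal{L}$ for the refresh substep, then project to the position coordinate. The paper formalizes the two substeps as kernels $\mathbb{K}_1^{\circ}$ and $\mathbb{K}_2^{\circ}$ and writes $\mathbb{K}^{\circ} = \mathbb{K}_2^{\circ}\mathbb{K}_1^{\circ}$, but the content of the argument is identical to yours.
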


\begin{proof}

We begin by defining two transition kernels $\mathbb{K}^{\circ}_{1}$ and $\mathbb{K}^{\circ}_{2}$ on $\mathcal{M}^{\circ}$. We define the Markov chain $\{(X_{i}^{(1)}, v_{i}^{(1)}) \}_{i \in \mathbb{N}}$ on $\mathcal{M}^{\circ}$ by the deterministic update rule
\begin{equation*}
(X_{i+1}^{(1)}, v_{i+1}^{(1)}) = \psi_{(X_{i}^{(1)},v_{i}^{(1)})}(T)
\end{equation*} 
and define $\mathbb{K}^{\circ}_{1}$ to be the associated transition kernel. We then define the Markov chain $\{(X_{i}^{(2)}, v_{i}^{(2)}) \}_{i \in \mathbb{N}}$ on $\mathcal{M}^{\circ}$ by the update rule
\begin{align*} 
X_{i+1}^{(2)} &= X_{i}^{(2)} \\
v_{i+1}^{(2)} &\sim \mathrm{Unif}(\mathbb{S}(\mathcal{T}_{X_{i+1}^{(2)}}))
\end{align*} 
and define $\mathbb{K}^{\circ}_{2}$ to be the associated transition kernel. We observe that $\mathbb{K}_{1}^{\circ}$ is deterministic, that $\mathbb{K}_{2}^{\circ}$ only ever updates its second coordinate, and finally that
\begin{equation} \label{EqKDecomp}
\mathbb{K}^{\circ} = \mathbb{K}_{2}^{\circ} \, \mathbb{K}_{1}^{\circ}. 
\end{equation}

Since the Liouville measure is invariant under geodesic flow \cite{Liouville_Invariant1, Liouville_Invariant2,Liouville_Invariant3}, we have
\begin{equation}\label{eq:s3}
\mathcal{L}(A) = \mathcal{L}(\Psi_T(A))
\end{equation}
for every measurable $A \subset \mathcal{M}^\circ$. This implies that the Liouville measure $\mathcal{L}$ is invariant under $\mathbb{K}^{\circ}_{1}$:
\begin{equation} \label{EqK1Invariant}
\mathcal{L} \, \mathbb{K}_{1}^{\circ} = \mathcal{L}.
\end{equation}

It is clear by inspection that 
\begin{equation} \label{EqK2Invariant}
\mathcal{L} \, \mathbb{K}_{2}^{\circ} = \mathcal{L}.
\end{equation}

Thus, by Equations \ref{EqKDecomp}, \ref{EqK1Invariant} and \ref{EqK2Invariant},
\begin{equation}
\mathcal{L} \, \mathbb{K}^\circ = \mathcal{L}.
\end{equation}

Let $\{(X_{i},U_{i})\}_{i \geq 0}$ be a Markov chain started at $(X_{1},U_{1}) \sim \mathcal{L}$ and evolving according to $\mathbb{K}^{\circ}$. By the definition of $\mathbb{K}^{\circ}$, the marginal process $\{X_{i}\}_{i \in \mathbb{N}}$ is a Markov chain evolving according to the transition kernel $\mathbb{K}$. Thus, the marginal distribution of $\mathcal{L}$ on its first coordinate $\mathcal{M}$ must be a stationary distribution for $\mathbb{K}$. But this marginal distribution is exactly $\mathrm{Unif}(\mathcal{M})$, completing the proof.
\end{proof}

\section{Coupling the Geodesic Walk}\label{sec:coupling}

We define a coupling of two copies of the geodesic walk: \\

\begin{defn}[Coupling of Geodesic Walk]\label{DefCouplingConstruction}
Fix $x,y \in \mathcal{M}$ and $0 < T < \infty$. We define a pair of stochastic processes $\{(X_{i},Y_{i})\}_{i \in \mathbb{N}}$ with $X_{1} = x$, $Y_{1} = y$ as follows:

Let $X_{1} = x$ and $Y_{1} = y$. For $i \in \mathbb{N}$, inductively sample
\begin{equation*} 
U_{i} \sim \mathrm{Unif}(\mathbb{S}(\mathcal{T}_{X_{i}}))
\end{equation*} 
and set 
\begin{align}
V_{i} &= \phi(U_{i};X_{i}, Y_{i})  \\
X_{i+1} &= \gamma_{(X_{i}, U_{i})}(T) \\
Y_{i+1} &= \gamma_{(Y_{i}, V_{i})}(T).
\end{align}

\end{defn}

This coupling is illustrated in Figure \ref{fig:coupling}.

\begin{figure}[t]
\begin{center}
\includegraphics[scale=0.4]{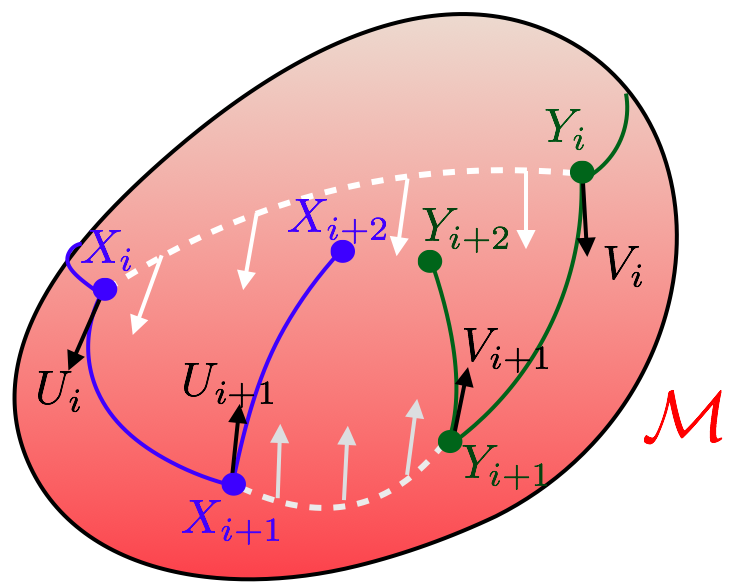}
\end{center}
\caption{Coupling of the two geodesic walks $X_1,X_2,\ldots$ (blue) and $Y_1,Y_2,\ldots$ (green).  The coupling is achieved by setting the initial trajectory velocity $V_i$ at $Y_i$ (black) to be the parallel transport of the initial trajectory velocity $U_i$ at $X_i$ along a minimum geodesic from $X_i$ to $Y_i$ (gray/white dashed line).  Since $\mathcal{M}$ has positive curvature, the distance between the Markov chains contracts rapidly at each step. }\label{fig:coupling}
\end{figure}

This stochastic process is a valid coupling of two copies of the geodesic walk:

\begin{thm}\label{thm:coupling}
Let $\{(X_{i},Y_{i})\}_{i \in \mathbb{N}}$ be as in Definition \ref{DefCouplingConstruction}. Then the marginal processes $\{X_{i}\}_{i \in \mathbb{N}}$ and $\{Y_{i}\}_{i \in \mathbb{N}}$ are each Markov chains with transition kernel $\mathbb{K}$.

\end{thm}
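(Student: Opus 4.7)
The plan is to reduce everything to a single measure-theoretic fact: parallel transport along the chosen minimizing geodesic $\omega(\cdot;X_i,Y_i)$ pushes the uniform distribution on $\mathbb{S}(\mathcal{T}_{X_i})$ forward to the uniform distribution on $\mathbb{S}(\mathcal{T}_{Y_i})$. Everything else is a routine use of the tower property.

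The $\{X_i\}_{i\in\mathbb{N}}$ marginal is essentially immediate from Definition \ref{DefCouplingConstruction}: the variable $U_i$ is sampled from $\mathrm{Unif}(\mathbb{S}(\mathcal{T}_{X_i}))$ conditionally on the past, and $X_{i+1}=\gamma_{(X_i,U_i)}(T)$, which is precisely the update rule of Algorithm \ref{alg:Geodesic_Walk}. Hence $\{X_i\}$ is a geodesic walk started at $x$, with transition kernel $\mathbb{K}$.

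For the $\{Y_i\}$ marginal, I would condition on the $\sigma$-algebra $\mathcal{F}_i:=\sigma(X_1,\ldots,X_i,Y_1,\ldots,Y_i,U_1,\ldots,U_{i-1})$ encoding the entire history up to (but not including) the drawing of $U_i$. Conditional on $\mathcal{F}_i$, the map $\phi(\cdot;X_i,Y_i):\mathcal{T}_{X_i}\to\mathcal{T}_{Y_i}$ is a fixed linear isometry, because the Levi-Civita connection induces an isometry of tangent spaces via parallel transport (as recalled in Section \ref{sec:assumptions}). Since $U_i\sim\mathrm{Unif}(\mathbb{S}(\mathcal{T}_{X_i}))$ conditionally on $\mathcal{F}_i$, the random vector $V_i=\phi(U_i;X_i,Y_i)$ is distributed as the pushforward of the uniform sphere measure under a linear isometry.

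The key content is that this pushforward is itself the uniform measure on $\mathbb{S}(\mathcal{T}_{Y_i})$. This follows directly from the basis-independence built into the ``Uniform Measures'' bullet: if $\{b_k\}$ is an orthonormal basis of $\mathcal{T}_{X_i}$, then $\{\phi(b_k;X_i,Y_i)\}$ is an orthonormal basis of $\mathcal{T}_{Y_i}$, the associated coordinate isometries $\zeta_{X_i}$ and $\zeta_{Y_i}$ intertwine with $\phi(\cdot;X_i,Y_i)$, and the rotation-invariant Lebesgue measure on $\mathbb{S}^{d-1}\subset\mathbb{R}^d$ pulls back to the same measure from either side. Once this is in hand, $Y_{i+1}=\gamma_{(Y_i,V_i)}(T)$ has conditional distribution $\mathbb{K}(Y_i,\cdot)$ given $\mathcal{F}_i$ by the very definition of $\mathbb{K}$; since this depends only on $Y_i$, the tower property yields $\mathbb{P}[Y_{i+1}\in R\mid Y_1,\ldots,Y_i]=\mathbb{K}(Y_i,R)$, so $\{Y_i\}$ is a Markov chain with transition kernel $\mathbb{K}$. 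The only nontrivial step is the pushforward identity; everything else is bookkeeping with $\sigma$-algebras, and there is no geometric obstacle because positive curvature and the contraction bound of Section \ref{sec:contraction} play no role in this argument.
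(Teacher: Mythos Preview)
Your proposal is correct and follows essentially the same approach as the paper: both proofs reduce to the fact that parallel transport along the Levi--Civita connection is a linear isometry and hence pushes the uniform measure on $\mathbb{S}(\mathcal{T}_{X_i})$ forward to the uniform measure on $\mathbb{S}(\mathcal{T}_{Y_i})$. The paper phrases this via preservation of angles and magnitudes (hence equality of densities), while you phrase it via the basis-independence of the pulled-back Lebesgue measure on $\mathbb{S}^{d-1}$; these are the same observation, and your added $\sigma$-algebra bookkeeping with $\mathcal{F}_i$ and the tower property is a welcome clarification that the paper leaves implicit.
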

\begin{proof}
It is straightforward to check that $\{X_{i}\}_{i \in \mathbb{N}}$ is exactly the Markov chain defined in Algorithm \ref{alg:Geodesic_Walk} (even the notation is the same). Thus, it remains only to check that $\{ Y_{i} \}_{i \in \mathbb{N}}$ has the correct distribution.

To see that $\{ Y_{i} \}_{i \in \mathbb{N}}$ has the correct distribution, looking at Algorithm \ref{alg:Geodesic_Walk} it is enough to check that (conditional on $\{ Y_{j} \}_{j \leq i}$), $V_{i}$ has uniform distribution on $\mathbb{S}(\mathcal{T}_{Y_{i}})$. 

By the fundamental theorem of Riemannian geometry, parallel transport using the Levi-Civita connection is an affine transformation.  Hence, it preserves the angles between any two vectors $u$ and $u'$ parallel transported to $v=\phi(u;X_i,Y_i)$ and $v' = \phi(u';X_i,Y_i)$, respectively:
\begin{equation}
\measuredangle(u, u') = \measuredangle(v, v').
\end{equation}
Therefore, denoting the conditional density of $U_i$ conditioned on $X_i$ by $f_{U_i}(u| X_i = x)$, and the conditional density of $V_i$ conditioned on $Y_i$ by $f_{V_i}(v| Y_i = y)$, we have
\begin{equation}
f_{V_i}(v|Y_i = y) = f_{U_i}(u| X_i = x) = \frac{1}{\mathrm{Vol}(\mathbb{S}^{d-1})}
\end{equation}
whenever $u$ is a vector on the unit sphere in the tangent plane of $x$, and $v = \phi(u;x,y)$ is its parallel transport to $Y_i$.  Since the Levi-Civita connection also preserves magnitude, the fact that $u$ is on the unit sphere in the tangent plane of $x$ implies that $v$ is on the unit sphere in the tangent plane of $y$.

Since $U_i$ is uniformly distributed on the unit sphere in $\mathcal{T}_{X_{i}}$, this implies that $V_i$ is uniformly distributed on the unit sphere in $\mathcal{T}_{Y_{i}}$.  Therefore the transition kernel of the Markov chain $\{Y_{i}\}_{i \in \mathbb{N}}$ must be $\mathbb{K}$ as well.
\end{proof}

\section{Contraction of Coupled Geodesics}\label{sec:contraction}

In this section we prove a contraction bound on the coupled geodesic walks $X_1,X_2,\ldots$ and $Y_1,Y_2,\ldots$ using the following extension of the Rauch comparison theorem from differential geometry:

\begin{lemma}\label{thm:Global_Rauch}
Let $x,y \in \mathcal{M}$, let $v_{x} \in \mathcal{T}_{x}$ and let $v_{y} = \phi (v_{x}; x,y)$. Then for all $0 \leq T \leq \frac{\uppi}{2 \sqrt{\mathfrak{M}_{2}}}$, we have 

\begin{equation}\label{eq:distance}
\mathrm{dist}(\gamma_{(x,v_{x})}(T), \gamma_{(y,v_{y})}(T)) \leq \cos(\sqrt{\mathfrak{m}_2} T) \, \times \, \mathrm{dist}(x,y).
\end{equation}
\end{lemma}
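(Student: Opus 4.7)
The plan is to bound the distance between the two endpoints by the arclength of an interpolating curve coming from a geodesic variation, and then control that arclength using Rauch's second comparison theorem applied to a Jacobi field with prescribed initial data.

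To set up the variation, let $L = \mathrm{dist}(x, y)$ and let $\omega : [0, L] \to \mathcal{M}$ be the unit-speed minimizing geodesic from $x$ to $y$ used in the definition of $\phi$. For each $s \in [0, L]$, let $v(s) := \phi(s; v_x; x, y)$ be the parallel transport of $v_x$ along $\omega$ to $\omega(s)$, so $v(0) = v_x$ and $v(L) = v_y$. Define $\Gamma(s, t) := \gamma_{(\omega(s), v(s))}(t)$. Each $t$-slice of $\Gamma$ is a geodesic, and the curve $s \mapsto \Gamma(s, T)$ joins $\gamma_{(x, v_x)}(T)$ to $\gamma_{(y, v_y)}(T)$, so
\begin{equation*}
\mathrm{dist}(\gamma_{(x, v_x)}(T), \gamma_{(y, v_y)}(T)) \leq \int_0^L |J_s(T)| \, \mathrm{d}s,
\end{equation*}
where $J_s(t) := \partial_s \Gamma(s, t)$ is the variation field.

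Because $\Gamma(\cdot, t)$ is a variation through geodesics, each $J_s$ is a Jacobi field along $t \mapsto \Gamma(s, t)$ satisfying the Jacobi equation from Section \ref{sec:assumptions}. Its initial data read off as $J_s(0) = \omega'(s)$, a unit vector, and $\tfrac{\mathrm{D}}{\mathrm{d}t}\big|_{t=0} J_s = \tfrac{\mathrm{D}}{\mathrm{d}s}\partial_t\Gamma\big|_{t=0} = \tfrac{\mathrm{D} v(s)}{\mathrm{d}s} = 0$, the last equality because $v$ is parallel along $\omega$. The lower curvature bound $\mathfrak{m}_2$ now lets us invoke Rauch's second comparison theorem: the norm of such a Jacobi field is dominated by that of the corresponding Jacobi field on the constant-curvature-$\mathfrak{m}_2$ model, which equals $\cos(\sqrt{\mathfrak{m}_2}\, t)$. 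The hypothesis $T \leq \uppi/(2\sqrt{\mathfrak{M}_2})$ keeps us strictly before the first conjugate time of $\Gamma(s, \cdot)$ (an upper bound for which is provided by $\mathfrak{M}_2$), so the comparison is valid throughout $[0, T]$. Substituting $|J_s(T)| \leq \cos(\sqrt{\mathfrak{m}_2}\, T)$ into the length integral yields the claimed bound.

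The step I expect to be the main obstacle is the application of Rauch II itself: the textbook version controls only the component of $J_s$ that is normal to the geodesic $\Gamma(s, \cdot)$, while $J_s(0) = \omega'(s)$ can carry a tangential component $\langle \omega'(0), v_x\rangle$, which (being preserved by parallel transport of both $\omega'$ and $v$ along $\omega$) is constant in $s$ and produces a tangential Jacobi field of constant norm. Handling this cleanly will require decomposing $J_s$ into tangential and normal parts, treating each separately, and appealing to the extended forms of the Rauch comparison theorem cited in \cite{abresch1997injectivity, cheeger2008comparison, rauch1951contribution} to absorb the tangential contribution and close the argument.
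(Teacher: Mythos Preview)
Your argument is essentially the paper's own proof: form the variation $\Gamma(s,t)=\gamma_{(\omega(s),v(s))}(t)$, bound the endpoint distance by $\int_0^L|J_s(T)|\,ds$, use the upper curvature bound $\mathfrak{M}_2$ (via Rauch) to rule out zeros of $J_s$ on $[0,T)$, and then use the lower bound $\mathfrak{m}_2$ (via Rauch again) to obtain $|J_s(T)|\le\cos(\sqrt{\mathfrak{m}_2}\,T)$. The paper carries out exactly these steps, in this order, and does not mention the tangential component at all.

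The obstacle you flag is genuine and cannot be ``absorbed'' in the way you hope. Writing $a:=\langle\omega'(0),v_x\rangle$, the tangential part of $J_s$ is the constant-norm field $a\,\gamma'_{(\omega(s),v(s))}$, while Rauch~II controls only the normal part, bounding it by $\sqrt{1-a^2}\,\cos(\sqrt{\mathfrak{m}_2}\,T)$; orthogonal recombination therefore yields at best
\[
|J_s(T)|\le\sqrt{a^2+(1-a^2)\cos^2(\sqrt{\mathfrak{m}_2}\,T)},
\]
which strictly exceeds $\cos(\sqrt{\mathfrak{m}_2}\,T)$ whenever $a\neq0$ and $T>0$. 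This is sharp: on the round sphere with $v_x=\omega'(0)$ the two geodesic endpoints remain at distance exactly $L$ for every $T$, so the inequality in the lemma fails as stated when $v_x$ is tangent to $\omega$. The paper's proof shares this gap. What does survive is contraction by the factor $\sqrt{a^2+(1-a^2)\cos^2(\sqrt{\mathfrak{m}_2}\,T)}<1$ for $|a|<1$, and after averaging over the uniformly random direction $U_i$ this still produces the Wasserstein-level contraction that is actually used downstream.
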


\begin{rem}
A lower bound for the geodesic distance is proved in \cite{RiemannianGeometryNotes}.  The authors mention that it is possible to prove an upper bound using a similar construction, although they do not do so explicitly.  For this reason, we prove Lemma \ref{thm:Global_Rauch} explicitly here.
\end{rem}

\begin{proof}
Consider the family of geodesics $\{\gamma_{(\omega(\tau;x,y),\phi(\tau;v_x;x,y))}(t) : \tau \in [0,\mathrm{dist}(x,y)]\}$.  Let $J_\tau(t)$ be the Jacobi field associated with this family.

Since $\mathcal{M}$ has curvature bounded above by $\mathfrak{M}_2>0$, the Rauch comparison theorem (comparing $\mathcal{M}$ to a sphere of radius $\frac{1}{\sqrt{\mathfrak{M}_2}}$) \cite{rauch1951contribution, eschenburg1994comparison} gives
\begin{equation}
\|J_\tau(t)\| \geq \cos(\sqrt{\mathfrak{M}_2}t)
\end{equation}
for all 
\begin{equation*}
t \leq \inf \{ t > 0 \, : \, \cos(\sqrt{\mathfrak{M}_{2}}t) = 0 \} = \frac{\uppi}{2 \sqrt{\mathfrak{M}_{2}}}.
\end{equation*}

Thus,
\begin{equation} \label{eq:conj}
\|J_\tau(t)\| >0
\end{equation}
for all $0 \leq t < T < \frac{\uppi}{2\sqrt{\mathfrak{M}_2}}$.
Therefore, since $\|J_\tau(t)\|$ has no zeros on $[0,T)$, applying the Rauch comparison theorem a second time gives (this time comparing $\mathcal{M}$ to a sphere of radius $\frac{1}{\sqrt{\mathfrak{m}_2}}$):
\begin{equation}\label{eq:Rauch}
\|J_\tau(t)\| \leq \cos(\sqrt{\mathfrak{m}_2}t)
\end{equation}
for all $0\leq t < T$.

Define
\begin{equation}
F(\tau) := \mathrm{length}(\{\gamma_{(\omega(s;x,y),\phi(s;v_x;x,y))}(T) : s \in [0,\tau]\})
\end{equation}
for $0 < \tau < \mathrm{dist}(x,y)$. Then
\begin{equation}\label{eq:derivative}
\frac{\mathrm{d}F}{\mathrm{d}\tau} = \|J_\tau(T)\|.
\end{equation}

Therefore, by the fundamental theorem of calculus,
\be
\mathrm{dist}(\gamma_{(x,v_{x})}(T), \gamma_{(y,v_{y})}(T)) &= F(\mathrm{dist}(x,y)) \\
&= \int_0^{\mathrm{dist}(x,y)} \frac{\mathrm{d}F}{\mathrm{d}\tau} \mathrm{d}\tau\\
&\stackrel{{\scriptsize \textrm{Eq. }} \eqref{eq:derivative}}{=} \int_0^{\mathrm{dist}(x,y)} \|J_\tau(T)\| \mathrm{d}\tau\\
&\stackrel{{\scriptsize \textrm{Eq. }} \eqref{eq:Rauch}}{\leq}  \int_0^{\mathrm{dist}(x,y)} \cos(\sqrt{\mathfrak{m}_2} T) d\tau\\
&= \mathrm{dist}(x,y) \times \cos(\sqrt{\mathfrak{m}_2} T).
\ee
\end{proof}

\begin{thm}\label{thm:contraction}
Fix $T\leq \frac{\uppi}{2\sqrt{\mathfrak{M}_2}}$ and let $\{(X_{i},Y_{i})\}_{i \geq 0}$ be coupled as in Definition \ref{DefCouplingConstruction}. Then
\begin{equation}
\mathrm{dist}(X_{i+1}, Y_{i+1}) \leq \mathrm{dist}(X_i, Y_i) \times \cos (\sqrt{\mathfrak{m}_2} T).
\end{equation}
\end{thm}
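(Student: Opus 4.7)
The plan is to observe that this theorem is essentially an immediate consequence of Lemma \ref{thm:Global_Rauch}, once we unpack how the coupling was constructed in Definition \ref{DefCouplingConstruction}. In fact, the coupling was specifically designed to put the pair $(X_i, U_i)$ and $(Y_i, V_i)$ into the exact hypothesis of the Rauch-type comparison lemma.

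First, I would condition on the pair $(X_i, Y_i)$ and on the sampled velocity $U_i \in \mathbb{S}(\mathcal{T}_{X_i})$. By the coupling construction, $V_i = \phi(U_i; X_i, Y_i)$ is the parallel transport of $U_i$ along the chosen minimizing geodesic $\omega(\cdot; X_i, Y_i)$ from $X_i$ to $Y_i$. Then, by definition, $X_{i+1} = \gamma_{(X_i, U_i)}(T)$ and $Y_{i+1} = \gamma_{(Y_i, V_i)}(T)$.

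Next, I would directly invoke Lemma \ref{thm:Global_Rauch} with the substitutions $x \gets X_i$, $y \gets Y_i$, $v_x \gets U_i$, and $v_y \gets V_i = \phi(U_i; X_i, Y_i)$. The hypothesis $T \le \frac{\uppi}{2\sqrt{\mathfrak{M}_2}}$ matches the hypothesis of the lemma, so the conclusion yields
\begin{equation*}
\mathrm{dist}(X_{i+1}, Y_{i+1}) \;=\; \mathrm{dist}(\gamma_{(X_i, U_i)}(T), \gamma_{(Y_i, V_i)}(T)) \;\leq\; \cos(\sqrt{\mathfrak{m}_2}\, T) \, \mathrm{dist}(X_i, Y_i).
\end{equation*}
Since this bound is a pointwise (deterministic, given the conditioning) inequality that holds for every realization of $U_i$ and every value of $(X_i, Y_i)$, it holds unconditionally, which is exactly the claim.

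There is effectively no obstacle: all of the geometric content — the Jacobi field analysis and the two applications of Rauch's theorem to upper- and lower-bound the sphere comparisons — was already absorbed into Lemma \ref{thm:Global_Rauch}. The only thing that the coupling construction buys us here is the identity $v_y = \phi(v_x; x, y)$, which is precisely the setup the lemma requires. Thus the proof is a one-line reduction, and the theorem is best viewed as the probabilistic reformulation of the deterministic comparison estimate.
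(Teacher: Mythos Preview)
Your proposal is correct and matches the paper's own proof exactly: the paper simply states that the result follows immediately from an application of Lemma \ref{thm:Global_Rauch}, and your write-up is just a careful unpacking of that one-line reduction.
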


\begin{proof}
This follows immediately from an application of Lemma \ref{thm:Global_Rauch}.
\end{proof}

\begin{thm} \label{thm:uniquenessStat}
The uniform distribution on $\mathcal{M}$ is the unique stationary distribution of the geodesic walk Markov chain defined in Algorithm \ref{alg:Geodesic_Walk}.
\end{thm}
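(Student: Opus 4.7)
The plan is to leverage the contraction bound of Theorem \ref{thm:contraction} together with an optimal Wasserstein coupling to show that any two stationary distributions for $\mathbb{K}$ must coincide. Since Theorem \ref{ThmStatExist} already exhibits $\mathrm{Unif}(\mathcal{M})$ as one stationary distribution, this yields uniqueness.

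First, I would suppose $\pi_1$ and $\pi_2$ are both stationary distributions of $\mathbb{K}$, and choose $(X_0, Y_0)$ to be an optimal Wasserstein coupling of $\pi_1$ and $\pi_2$, so that $\mathbb{E}[\mathrm{dist}(X_0, Y_0)] = W_d(\pi_1, \pi_2)$. Such a coupling exists, and in particular is finite, because the diameter satisfies $D \leq \uppi/\sqrt{\mathfrak{m}_2} < \infty$ by the Myers--Bonnet bound in \eqref{IneqDiameterMyersBonnet}.

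Second, I would extend $(X_0, Y_0)$ to a coupled process $\{(X_i, Y_i)\}_{i \in \mathbb{N}}$ by applying the construction of Definition \ref{DefCouplingConstruction} conditionally on $(X_i, Y_i)$ at each step. Theorem \ref{thm:coupling} guarantees that, conditioned on any deterministic starting pair, each marginal evolves according to $\mathbb{K}$; integrating over the joint distribution of $(X_0, Y_0)$, the same conclusion holds for the randomly-initialized coupling. Since $\pi_1$ and $\pi_2$ are stationary for $\mathbb{K}$, it follows that $X_i \sim \pi_1$ and $Y_i \sim \pi_2$ for every $i \geq 0$, so $(X_i, Y_i)$ is itself a (sub-optimal) coupling of $\pi_1$ and $\pi_2$.

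Third, applying Theorem \ref{thm:contraction} inductively together with the tower property of conditional expectation yields
\begin{equation*}
\mathbb{E}[\mathrm{dist}(X_i, Y_i)] \leq \cos(\sqrt{\mathfrak{m}_2}\, T)^{i} \cdot W_d(\pi_1, \pi_2).
\end{equation*}
Under the standing assumption $0 < T \leq \uppi/(2\sqrt{\mathfrak{M}_2})$, the factor $\cos(\sqrt{\mathfrak{m}_2}\, T)$ lies strictly in $[0,1)$, so the right-hand side tends to $0$ as $i \to \infty$. Combining this with the lower bound $W_d(\pi_1, \pi_2) \leq \mathbb{E}[\mathrm{dist}(X_i, Y_i)]$ gives $W_d(\pi_1, \pi_2) = 0$, hence $\pi_1 = \pi_2$.

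The only real subtlety, and the step I would be most careful about, is verifying that Theorem \ref{thm:coupling} (which is stated for deterministic starting points $x, y$) transfers cleanly to the case of a randomized initial condition $(X_0, Y_0) \sim \xi \in \Xi(\pi_1, \pi_2)$; this is a routine application of the Markov property, but it is the one place where the argument is not a direct quotation of earlier results. Everything else reduces to iterating the established contraction estimate and invoking the triangle inequality implicit in the definition of the Wasserstein distance.
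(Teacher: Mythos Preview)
Your proposal is correct and follows essentially the same approach as the paper: both arguments use the contraction of Theorem~\ref{thm:contraction} to force any two stationary measures to have zero Wasserstein distance. The paper's version is slightly terser, deriving an immediate contradiction from a single application of the contraction inequality (if $\Pi' \neq \Pi$ then $W_d(\Pi,\Pi') \leq \cos(\sqrt{\mathfrak{m}_2}T)\, W_d(\Pi,\Pi') < W_d(\Pi,\Pi')$), whereas you iterate the bound and pass to the limit; but the substance, including the implicit lift from the pointwise contraction to the Wasserstein level via an optimal coupling, is identical.
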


\begin{proof}
By Theorem \ref{ThmStatExist}, the uniform distribution $\Pi$ on $\mathcal{M}$ is a stationary distribution of $\mathbb{K}$. Assume that there is a second stationary distribution $\Pi' \neq \Pi$. Since $\mathcal{M}$ has finite diameter, this implies $0 < W_{d}(\Pi, \Pi') < \infty$. By Theorem \ref{thm:contraction}, however,
\begin{equation*} 
W_{d}(\Pi, \Pi') = W_{d}(\Pi \, \mathbb{K}, \Pi' \, \mathbb{K}) \leq   \cos (\sqrt{\mathfrak{m}_2} T) \times W_{d}(\Pi, \Pi') < \, W_{d}(\Pi, \Pi'),
\end{equation*}
 since $0 < W_{d}(\Pi, \Pi') < \infty$. This is a contradiction, so no such distribution $\Pi'$ exists.
\end{proof}

\section{Bounding the Mixing Time} \label{sec:mixing}

In this section we bound the mixing time (in Wasserstein Transportation distance) using the approach of \cite{ollivier2009ricci}.  

\begin{thm}\label{thm:mixing}
The Wasserstein mixing profile $\mathfrak{t}_\mathrm{mix}$ of the Markov chain with transition kernel $\mathbb{K}$ defined in Equation \eqref{DefTransKernel} of Section \ref{sec:Walk} and parameter $0 \leq T \leq \frac{\pi}{2 \sqrt{\mathfrak{M}_{2}}}$ satisfies
\begin{equation}
\mathfrak{t}_\mathrm{mix}(\epsilon) \leq \Big\lceil \frac{\log(\epsilon D^{-1})}{\log(\cos (\sqrt{\mathfrak{m}_2} T))} \Big\rceil \ = \mathcal{O}\Bigg(\frac{1}{\mathfrak{m}_2T^2} \cdot \log(D \epsilon^{-1})\Bigg).
\end{equation}
In particular, for $T = \frac{\uppi}{2\sqrt{\mathfrak{M}_2}}$, the mixing time is bounded by
\begin{equation}
\mathfrak{t}_\mathrm{mix}(\epsilon) \leq \Big\lceil \frac{\log(\epsilon D^{-1})}{\log(\cos(\frac{\uppi}{2} \frac{\sqrt{\mathfrak{m}_2}}{\sqrt{\mathfrak{M}_2}}))} \Big\rceil  = \mathcal{O}\Bigg(\frac{\mathfrak{M}_2}{\mathfrak{m}_2} \cdot \log(D \epsilon^{-1})\Bigg).
\end{equation}
\end{thm}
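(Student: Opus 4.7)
The plan is to iterate the per-step contraction from Theorem \ref{thm:contraction}, couple against the stationary distribution (which exists and is unique by Theorems \ref{ThmStatExist} and \ref{thm:uniquenessStat}), and invert the resulting geometric decay to extract the mixing profile, in the style of \cite{ollivier2009ricci}.

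First I would iterate the contraction along the coupling of Definition \ref{DefCouplingConstruction}. For any fixed starting points $x, y \in \mathcal{M}$, combining Theorem \ref{thm:contraction} with the tower property gives
\begin{equation*}
\mathbb{E}[\mathrm{dist}(X_i, Y_i)] \leq \cos(\sqrt{\mathfrak{m}_2} T)^{i-1} \, \mathrm{dist}(x, y) \leq \cos(\sqrt{\mathfrak{m}_2} T)^{i-1} \, D,
\end{equation*}
where the last step uses the Myers-Bonnet diameter bound (Equation \ref{IneqDiameterMyersBonnet}). By Theorem \ref{thm:coupling}, $(X_i, Y_i)$ is a valid coupling of $\mathbb{K}^{i-1}(x, \cdot)$ and $\mathbb{K}^{i-1}(y, \cdot)$, so the definition of Wasserstein distance immediately yields
\begin{equation*}
W_d(\mathbb{K}^{i-1}(x, \cdot),\, \mathbb{K}^{i-1}(y, \cdot)) \leq \cos(\sqrt{\mathfrak{m}_2} T)^{i-1} \, D.
\end{equation*}

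Next I would integrate over $y \sim \Pi$. Using the standard fact that a $\Pi$-average of couplings is itself a coupling (a routine Fubini argument), together with the stationarity identity $\Pi \mathbb{K}^{i-1} = \Pi$ from Theorem \ref{ThmStatExist}, we obtain
\begin{equation*}
W_d(\mathbb{K}^{i-1}(x, \cdot),\, \Pi) \leq \int_{\mathcal{M}} W_d(\mathbb{K}^{i-1}(x, \cdot),\, \mathbb{K}^{i-1}(y, \cdot)) \, d\Pi(y) \leq \cos(\sqrt{\mathfrak{m}_2} T)^{i-1} \, D.
\end{equation*}
Requiring the right-hand side to be at most $\epsilon$ and solving for $i$ gives the ceiling bound stated in the theorem. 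The asymptotic simplification $\mathcal{O}(\frac{1}{\mathfrak{m}_2 T^2} \log(D/\epsilon))$ then follows from the Taylor expansion $\log(\cos s) = -s^2/2 + O(s^4)$ applied at $s = \sqrt{\mathfrak{m}_2} T$. For the specialized choice $T = \pi/(2\sqrt{\mathfrak{M}_2})$ (the largest integration time permitted by the hypothesis of Theorem \ref{thm:contraction}), the cosine argument $\frac{\pi}{2}\sqrt{\mathfrak{m}_2/\mathfrak{M}_2}$ lies in $[0, \pi/2]$, and the elementary bound $\cos(\pi \rho /2) \leq 1 - c\rho^2$ for some absolute constant $c > 0$ on $\rho \in [0, 1]$ produces $-\log(\cos(\frac{\pi}{2}\sqrt{\mathfrak{m}_2/\mathfrak{M}_2})) \gtrsim \mathfrak{m}_2/\mathfrak{M}_2$, yielding the $\mathcal{O}(\mathfrak{M}_2/\mathfrak{m}_2 \cdot \log(D/\epsilon))$ bound.

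The only step that requires any care is the passage from the coupling with deterministic starting points to the $\Pi$-averaged coupling; everything else is substitution and scalar calculus. Since Theorem \ref{thm:contraction} has already packaged the geometric content, this last theorem is essentially a bookkeeping exercise once the coupling is in hand.
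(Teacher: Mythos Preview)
Your proposal is correct and follows essentially the same approach as the paper: iterate the deterministic contraction of Theorem~\ref{thm:contraction} along the coupling of Definition~\ref{DefCouplingConstruction}, bound the initial distance by the diameter $D$, and invert the resulting geometric decay. The paper's proof is slightly terser---it simply asserts that the iterated bound ``immediately implies'' the mixing-time estimate without spelling out the integration over $y\sim\Pi$---whereas you make that step explicit; but the content is the same.
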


\begin{proof}
Fix $0 \leq T \leq \frac{\uppi}{2\sqrt{\mathfrak{M}_2}}$. Let $\{(X_{i},Y_{i})\}_{i \in \mathbb{N}}$ be two copies of the Markov chain with kernel $\mathbb{K}$, coupled as in Definition \ref{DefCouplingConstruction}. Recall that,  by Theorem \ref{thm:coupling}, this is a valid coupling.
By Theorem \ref{thm:contraction}, this chain satisfies the following contraction inequality:
\begin{equation}
\mathrm{dist}(X_{i+1}, Y_{i+1}) \leq \mathrm{dist}(X_i, Y_i) \times \cos (\sqrt{\mathfrak{m}_2} T ) \quad \quad \forall i>0.
\end{equation}

Applying this contraction inequality repeatedly, we find:
\begin{equation}
\mathrm{dist}(X_{j}, Y_{j}) \leq \mathrm{dist}(X_0, Y_0) \times \cos (\sqrt{\mathfrak{m}_2} T)^j \leq D \times \cos (\sqrt{\mathfrak{m}_2} T)^j \quad \quad \forall j>0.
\end{equation}

This bound immediately implies that, for all $0 < \epsilon < 1$,
\begin{equation}
\mathfrak{t}_\mathrm{mix}(\epsilon) \leq \Big\lceil \frac{\log(\epsilon D^{-1})}{\log(\cos (\sqrt{\mathfrak{m}_2} T))} \Big\rceil.
\end{equation}

In particular, for $T = \frac{\uppi}{2\sqrt{\mathfrak{M}_2}}$, we get:
\begin{equation}
\mathfrak{t}_\mathrm{mix}(\epsilon) \leq  \Big\lceil \frac{\log(\epsilon D^{-1})}{\log(\cos(\frac{\uppi}{2} \frac{\sqrt{\mathfrak{m}_2}}{\sqrt{\mathfrak{M}_2}}))} \Big\rceil.
\end{equation}
\end{proof}

\section{Approximating Geodesics}\label{sec:approximating}

In this section we describe and analyze an approximation to the geodesic walk, showing that it can be used to approximately sample from the uniform distribution on $\mathcal{M}$ to arbitrary accuracy using a dimension-independent number of computations. 

We begin in Section \ref{sec:Approx_Manifold} by assuming that we have access to an oracle $\bigstar$, described at the beginning of Section \ref{sec:Approx_Manifold}, that can approximate short geodesic paths with small error.  We use this oracle to construct an ``approximate" version of the geodesic walk, given in Algorithm \ref{alg:Approx_Manifold}. Finally, we prove that, for any fixed error rate $\epsilon > 0$, this approximate geodesic walk can give samples that are within $\epsilon$ of the uniform distribution on $\mathcal{M}$ in Wasserstein distance using a dimension-independent number of oracle calls.  In Section \ref{sec:Approx_Convex}, we construct $\bigstar$ explicitly for the special case where $\mathcal{M}$ is the boundary of a convex body, using only the basic convex body oracles used in \cite{billiards} (Figure \ref{fig:approx}).

\subsection{Approximating Geodesics on General Positive-Curvature Manifolds} \label{sec:Approx_Manifold}

In Algorithm \ref{alg:Approx_Manifold} we compute recursively an approximation $\gamma^\dagger_{\theta;(x,v)}(T)$ of $\gamma_{(x,v)}(T)$ by repeatedly calling an oracle $\bigstar: \mathcal{M}^\circ \times \mathbb{R}^+ \rightarrow \mathcal{M}^\circ \times \mathbb{R}^+ $.  We use the following notation for the oracle $\bigstar$:

\begin{defn}
An oracle $\bigstar: \mathcal{M}^\circ \times \mathbb{R}^+ \rightarrow \mathcal{M}^\circ \times \mathbb{R}^+ $ is a function that has inputs $(x,v) \in \mathcal{M}^\circ$, and $\theta>0$, and outputs $\psi^\star_{\theta;(x,v)} = \bigg(\gamma^\star_{\theta;(x,v)}, \varphi^\star_{\theta;(x,v)}\bigg) \in \mathcal{M}^\circ$ and $\Delta^\star_{\theta; (x,v)} \in \mathbb{R}^{+}$.
\end{defn}

The oracle tries to approximate the geodesic trajectory $\psi_{(x,v)}(t)$ at $t=\Delta^\star_{\theta; (x,v)}$, where the step size $\Delta^\star_{\theta; (x,v)}$ is determined by the oracle.  The parameter $\theta >0$ allows the user to adjust the accuracy of the oracle's approximation.

For every oracle $\bigstar$ and every $\theta>0$, we define
\begin{equation}\Delta^\star_{\theta;\mathrm{min}} := \inf_{(x,v) \in \mathcal{M}^\circ} \Delta^\star_{\theta; (x,v)}. \end{equation}

We make the following assumptions about the oracle $\bigstar$:

\begin{assumption}\label{assumption:polynomial}
There exist multivariate polynomials $P$ and $Q$ with the property that, at every $(x,v) \in \mathcal{M}^\circ$ and every $0 \leq \theta < \frac{1}{70} \frac{\mathfrak{m}_2}{\mathfrak{M}_2}$, we have
\begin{equation}\label{eq:polynomial}
\frac{1}{\sqrt{\mathfrak{M}_2}}\cdot P\bigg(\theta,\frac{\sqrt{\mathfrak{M}_2}}{\sqrt{\mathfrak{m}_2}}, \frac{\sqrt{\mathfrak{m}_2}}{\sqrt{\mathfrak{M}_2}}\bigg)  \leq \Delta^\star_{\theta; (x,v)} \leq \frac{1}{\sqrt{\mathfrak{M}_2}} \cdot Q\bigg(\theta,\frac{\sqrt{\mathfrak{M}_2}}{\sqrt{\mathfrak{m}_2}}, \frac{\sqrt{\mathfrak{m}_2}}{\sqrt{\mathfrak{M}_2}}\bigg).
\end{equation}
Also, for any fixed $0 < a,b < \infty$, $P(\theta,a,b)$ and $Q(\theta,a,b)$ are strictly increasing in $\theta$ over the interval $0 < \theta < \frac{1}{70} \frac{\mathfrak{m}_2}{\mathfrak{M}_2}$, and $P(0,a,b) = Q(0,a,b) = 0$.
\end{assumption}

We also assume:

\begin{assumption}\label{assumption:accuracy}
There exist constants $\alpha >0$, $\beta >0$, and ${k}\in \mathbb{N}$ such that
\begin{equation} \label{IneqAlphaAssumption}
\mathrm{dist}(\gamma_{(x,v)}(\Delta^\star_{\theta;{(x,v)}}), \gamma^\star_{\theta; (x,v)}) \leq  \alpha \cdot  \theta^{k} \cdot \Delta^\star_{\theta; (x,v)}
\end{equation}
and also
\begin{equation} \label{IneqBetaAssumption}
\|\varphi^\star_{\theta; (x,v)} - \overline{\varphi_{(x,v)}(\Delta^\star_{\theta;{(x,v)}})}\| \leq \beta \cdot \sqrt{\mathfrak{M}_2} \cdot \theta^{k} \cdot \Delta^\star_{\theta;{(x,v)}}
\end{equation}
for every $(x,v) \in \mathcal{M}^\circ$ and $0 \leq \theta < \frac{1}{70} \frac{\mathfrak{m}_2}{\mathfrak{M}_2}$,\\ where $\overline{\varphi_{(x,v)}(\Delta^\star_{\theta;{(x,v)}})} := \phi(\varphi_{(x,v)}(\Delta^\star_{\theta;{(x,v)}}); \gamma_{(x,v)}(\Delta^\star_{\theta;{(x,v)}}), \gamma_{\theta;(x,v)}^\star).$
\end{assumption}

For the rest of Section \ref{sec:Approx_Manifold}, we will assume that the oracle $\bigstar$ satisfies Assumptions \ref{assumption:polynomial} and \ref{assumption:accuracy}.

\begin{algorithm}[H]
\caption{Numerical Approximation of Geodesic Trajectory \label{alg:Approx_Manifold}}
\flushleft
\textbf{parameters:} $\theta>0$, $T>0$.\\
 \textbf{input:}  Manifold $(\mathcal{M},g)$ with sectional curvature bounded above by $\mathfrak{M}_2$.\\
 \textbf{input:}   Initial point $x_0 \in \mathcal{M}$, initial velocity $v_0 \in \mathcal{T}_{x_0}\mathcal{M}$.\\
 \textbf{input:}   Oracle $\bigstar$ returning $\psi^\star_{\theta; (x,v)}$ and $\Delta^\star_{\theta; (x,v)}$ for any $(x,v) \in \mathcal{M}^\circ$.\\
 \textbf{output:} $\gamma^\dagger_{\theta; (x_0,v_0)}(T)$.
\begin{algorithmic}[1]
\State Set $i=0$, $\Delta_{0} = 0$. 
\State Set $x_0^\dagger = x_0$ and $v_0^\dagger = v_0$.
\While{$\sum_{j=0}^i \Delta_j \leq T$}
\\ Call the oracle $\bigstar$ to compute $(x_{i+1}^\dagger, v_{i+1}^\dagger) = \psi^\star_{\theta; (x_i^\dagger,v_i^\dagger)}$ and $\Delta_{i+1} = \Delta^\star_{\theta;{(x_i^\dagger,v_i^\dagger)}}$.
\\ Set $i = i+1$.
\EndWhile
\\Set $\delta = T - \sum_{j=1}^{i }\Delta_j$. 
\\ Using the bisection method, iteratively call $\bigstar$ to find a value $\hat{\theta}$ for which $\delta - \frac{\theta^{k}}{\sqrt{\mathfrak{M}_2}} \leq \Delta^\star_{\hat{\theta};{(x_i^\dagger,v_i^\dagger)}} \leq \delta$. Set $\Delta_{i+1} := \Delta^\star_{\hat{\theta};{(x_i^\dagger,v_i^\dagger)}}$.
\\ Call the oracle $\bigstar$ to compute $x_{i+1}^\dagger = \gamma^\star_{\hat{\theta}; (x_i^\dagger,v_i^\dagger)}$.
\\ Set $\gamma^\dagger_{\theta; (x_0,v_0)}(T) = x_{i+1}^{\dagger}$.
\end{algorithmic}
\end{algorithm}

Using the notation of Algorithm \ref{alg:Geodesic_Walk}, the geodesic walk Markov chain is defined recursively by
\begin{equation} X_{i+1}:= \gamma_{(X_i,U_i)}(T) \quad \quad \forall i = 0,1,2,\ldots \end{equation}
where $U_i$ is uniformly distributed on $\mathbb{S}(\mathcal{T}_{X_i})$.
For fixed $\theta > 0$, we can use Algorithm \ref{alg:Approx_Manifold} to recursively generate an approximate geodesic walk $X_1^\theta, X_2^\theta,\ldots$ coupled to the geodesic walk $X_1, X_2, \ldots$, using the following recursion:
\be \label{EqDefGeodesicWalkApprox}
X^\theta_0 &= X_0 \\
X_{i+1}^\theta &= \gamma^\dagger_{\theta; (X^\theta_i,U_{i}')}(T) \quad \quad \forall i = 0,1,2,\ldots, 
\ee

where $U_{i}' = \phi(U_{i}; X_{i}, X_{i}^{\theta})$. For $i \in \mathbb{N}$, we let $\Pi^\theta_i$ be the distribution of $X^\theta_i$.

\begin{figure}[t]
\begin{center}
\begin{tikzpicture}[scale=3.2]

\draw[thick] (-0.985,-0.6)--(-0.92,0.1);
\node at (-0.92,-0.47) {$\theta$};
\node[blue] at (-1.1,-0.6) {$x_0$};

\draw[thick] (-0.92,0.1)--(-0.3,1);
\node at (-0.75,0.26) {$\theta$};
\node at (-1.05,0.12) {$x_1^\dagger$};

\draw[thick] (-0.3,1)--(0.13,1.1);
\node at (-0.21,0.95) {$\hat{\theta}$};
\node at (-0.34,1.13) {$x_2^\dagger$};

\draw [red, thick, domain=-1.118:1.118, samples=100] plot ({sqrt(1-0.8*abs(\x^2))/(1+0.3*\x)},\x);
\draw [red, thick, domain=-1.118:1.118, samples=100] plot ({-sqrt(1-0.8*abs(\x^2))/(1+0.3*\x)+0.04},\x);

\draw [blue, line width=2pt, domain=1.03:1.118, samples=100] plot ({sqrt(1-0.8*abs(\x^2))/(1+0.3*\x)},\x);
\draw [blue, line width=2pt, domain=-0.6:1.118, samples=100] plot ({-sqrt(1-0.8*abs(\x^2))/(1+0.3*\x)+0.04},\x);

\draw[blue,fill=blue] (-0.985,-0.6) circle (.17ex);
\draw[blue,fill=blue] (0.29,1.03) circle (.17ex);
\node[blue] at (0.4,0.9) {$\gamma_{(x_0,v_0)}(T)$};


\draw[black,fill=black] (0.13,1.1) circle (.17ex);
\node at (0.4,1.2) {$x_3^\dagger = \gamma_{\theta;(x_0,v_0)}^\dagger(T)$};


\draw[thick] (-0.967,-0.45) arc (40:120:0.04);
\draw[thick] (-0.79,0.3) arc (20:100:0.04);
\draw[thick] (-0.16,1.03) arc (-20:60:0.037);

\node[red] at (1,-1) {$\mathcal{M}$};

\draw[->, thick, color={rgb:black,1;blue,6}] (-0.985,-0.6)--(-1.1,-0.3);

\node[blue] at (-1.2,-0.35) {$v_0$};

\end{tikzpicture}
\end{center}
\caption{Algorithm for approximating geodesics.  In this example $\mathcal{M}$ is the boundary of a convex body, so one can generate each step in the approximation as a line $\ell_i$ leaving the current approximation point $x_i^\dagger$ on the boundary at a fixed angle $\theta$.  The next approximation point $x_{i+1}^\dagger$ is the next point when $\ell_i$ intersects the boundary.  Lemmas \ref{thm:approx_manifold} and \ref{thm:approx_convex} show that the final point $x_{i_\mathrm{max}+1}^\dagger = \gamma_{\theta;(x_0,v_0)}^\dagger(T)$ (here $i_\mathrm{max}$ = 2) approximates the final point $\gamma_{(x_0,v_0)}(T)$ of the true geodesic trajectory to arbitrary accuracy $\epsilon$ in $i_\mathrm{max}(\epsilon)$ steps, where $i_\mathrm{max}(\epsilon)$ is independent of the dimension, if $\theta$ is appropriately chosen.  \label{fig:approx}}
\end{figure}
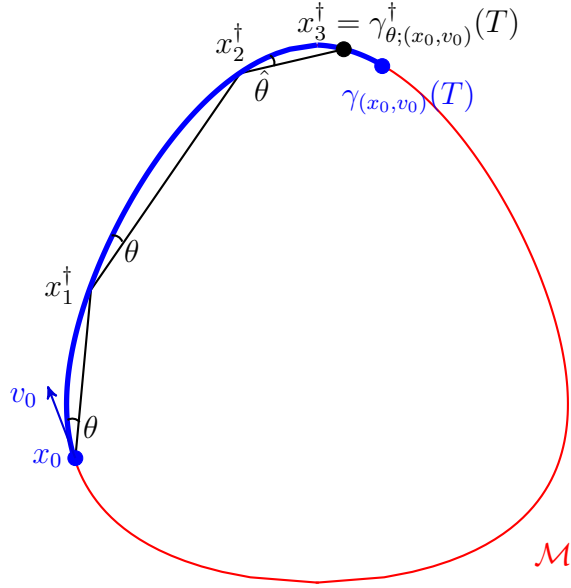

The following lemma shows that our approximate geodesic chain remains close to the actual geodesic chain:

\begin{lemma}\label{thm:approx_manifold}
 For every $(x_0,v_0) \in \mathcal{M}^\circ$, every $0 \leq \theta < \frac{1}{70} \frac{\mathfrak{m}_2}{\mathfrak{M}_2}$, and every $0 \leq T \leq \frac{\uppi}{2 \sqrt{\mathfrak{M}_{2}}}$,  Algorithm \ref{alg:Approx_Manifold} makes at most $\lceil \frac{T}{\Delta_{\theta; \mathrm{min}}^{\star}}\rceil + \mathcal{O}(\log(\frac{\sqrt{\mathfrak{M}_2}}{\sqrt{\mathfrak{m}_2}} \times \frac{\sqrt{\mathfrak{M}_2}}{\theta^{k}}))$ calls of the oracle $\bigstar$ and returns a point $\gamma_{\theta; (x_0,v_0)}^\dagger(T)$ such that
\begin{equation}
\mathrm{dist}(\gamma_{(x_0,v_0)}(T), \gamma_{\theta; (x_0,v_0)}^\dagger(T)) \leq \bigg[1 + \frac{\uppi}{2}\alpha + (\frac{\uppi}{2})^2 \beta\bigg] \cdot \frac{1}{\sqrt{\mathfrak{M}_2}} \cdot \theta^{k}.
\end{equation}
\end{lemma}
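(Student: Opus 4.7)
The plan is to split the argument into two pieces: bounding the number of oracle calls, and bounding the position error of the returned point $\gamma^\dagger_{\theta;(x_0,v_0)}(T)$.

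For the oracle call count, the \texttt{while} loop terminates as soon as $\sum_j \Delta_j > T$, and since every $\Delta_j \geq \Delta^\star_{\theta;\mathrm{min}}$, the loop performs at most $\lceil T/\Delta^\star_{\theta;\mathrm{min}}\rceil$ iterations. The final bisection step searches monotonically for $\hat\theta$ such that $\Delta^\star_{\hat\theta; (x_i^\dagger, v_i^\dagger)}$ lies in an interval of width $\theta/\sqrt{\mathfrak{M}_2}$; by Assumption \ref{assumption:polynomial} the range of $\Delta^\star_{\cdot;(x_i^\dagger,v_i^\dagger)}$ as $\hat\theta$ varies over $[0,\theta]$ has width $\mathcal{O}(\mathrm{poly}(\sqrt{\mathfrak{M}_2}/\sqrt{\mathfrak{m}_2})/\sqrt{\mathfrak{M}_2})$, and because this function is strictly increasing in $\hat\theta$, bisection converges in $\mathcal{O}(\log(\sqrt{\mathfrak{M}_2}/\sqrt{\mathfrak{m}_2}))$ oracle calls.

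For the error bound, I would track two quantities at iteration $i$: the position error $e_i := \mathrm{dist}(\gamma_{(x_0,v_0)}(s_i),x_i^\dagger)$ and the velocity error $f_i := \|\phi(\varphi_{(x_0,v_0)}(s_i);\gamma_{(x_0,v_0)}(s_i),x_i^\dagger) - v_i^\dagger\|$, where $s_i := \sum_{j=1}^i \Delta_j$. Decomposing one oracle step via the triangle inequality gives three contributions to $e_{i+1}$: (i) the already-present $e_i$, which by Lemma \ref{thm:Global_Rauch} applied to parallel-transported velocities shrinks by $\cos(\sqrt{\mathfrak{m}_2}\Delta_{i+1}) \leq 1$; (ii) a contribution from $f_i$, which by the standard Rauch upper bound on Jacobi fields with $J(0)=0$, $\|J'(0)\| = f_i$ (comparison with the sphere of curvature $\mathfrak{m}_2$) is at most $\frac{\sin(\sqrt{\mathfrak{m}_2}\Delta_{i+1})}{\sqrt{\mathfrak{m}_2}}f_i \leq \Delta_{i+1} f_i$; and (iii) the fresh oracle position error $\alpha\theta\Delta_{i+1}$ from Assumption \ref{assumption:accuracy}. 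An analogous Jacobi-field estimate on $\|J'(t)\|$ together with the $\beta\sqrt{\mathfrak{M}_2}\theta\Delta_{i+1}$ oracle velocity error yields a parallel recursion for $f_{i+1}$.

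Rather than iterating the recursion, I would sum the fresh errors introduced across all steps. The position errors total $\sum_i \alpha\theta\Delta_i = \alpha\theta\cdot(\sum_i\Delta_i) \leq \alpha\theta T \leq \tfrac{\uppi\alpha\theta}{2\sqrt{\mathfrak{M}_2}}$. Each fresh velocity error $\beta\sqrt{\mathfrak{M}_2}\theta\Delta_i$ acts as a new ``$J'(0)$'' that converts to position error at most $(T-s_i)\beta\sqrt{\mathfrak{M}_2}\theta\Delta_i$ over the remaining time, so these contributions sum to at most $\beta\sqrt{\mathfrak{M}_2}\theta T^2 \leq (\uppi/2)^2 \beta\theta/\sqrt{\mathfrak{M}_2}$. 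Finally, the bisection step ensures the total integrated time differs from $T$ by at most $\theta/\sqrt{\mathfrak{M}_2}$, contributing at most $\theta/\sqrt{\mathfrak{M}_2}$ in extra distance since the true geodesic has unit speed. Adding these three pieces produces exactly the claimed bound $[1 + \tfrac{\uppi}{2}\alpha + (\tfrac{\uppi}{2})^2\beta]\cdot\theta/\sqrt{\mathfrak{M}_2}$.

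The main obstacle will be executing the velocity-error recursion cleanly: the Jacobi equation couples $J$ and $J'$ through the curvature tensor, so position errors feed into velocity errors and vice-versa, and one must verify that the simple ``fresh position errors plus (remaining time)$\times$(fresh velocity errors)'' summation is not significantly disturbed by these cross-terms. The hypothesis $T \leq \uppi/(2\sqrt{\mathfrak{M}_2})$ is crucial here, keeping all trigonometric factors bounded away from zeros and the individual $\Delta_i$ small enough that the second-order coupling contributes only lower-order corrections absorbable into the leading constants.
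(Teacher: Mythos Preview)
Your proposal is correct and lands on the same argument as the paper. The paper's proof is exactly your ``sum the fresh errors'' step, carried out via an explicit telescoping family of true geodesics: for each $i$ it considers $\gamma_{(x_i^\dagger,v_i^\dagger)}(\cdot)$ launched from the current approximate phase-space point and run for the remaining time $T_i$, so consecutive members of the family differ only by the one-step oracle error. The gap between consecutive endpoints is then split just as you describe---Lemma~\ref{thm:Global_Rauch} handles the fresh position discrepancy (contributing $\alpha\theta\Delta_i$) and the Toponogov comparison theorem (equivalent to your Rauch bound with $J(0)=0$) handles the fresh velocity discrepancy (contributing $T_i\cdot\beta\sqrt{\mathfrak{M}_2}\theta\Delta_i$); the unit-speed bound on the final time mismatch gives the remaining $\theta/\sqrt{\mathfrak{M}_2}$. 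The obstacle you flag---cross-coupling in an $(e_i,f_i)$ recursion comparing to the \emph{original} geodesic---never actually arises in the paper's framing: because each intermediate curve is a genuine geodesic launched from $(x_i^\dagger,v_i^\dagger)$, there is no accumulated velocity error to feed back, and the constant $1+\tfrac{\uppi}{2}\alpha+(\tfrac{\uppi}{2})^2\beta$ drops out exactly rather than ``up to absorbable corrections.'' Your initial recursion setup is thus unnecessary scaffolding; the telescoping family is the clean way to execute the idea you already have.
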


\begin{proof}

\textbf{Counting the number of $\bigstar$ Oracle calls:} Define $i_\mathrm{max} := \max \{ i \in \mathbb{N} \, : \, \sum_{j=1}^{i+1} \Delta_j \leq T \}$. The number of $\bigstar$ oracle calls required for Step 4 in Algorithm \ref{alg:Approx_Manifold} is at most $\lceil \frac{T}{\Delta_{\theta; \mathrm{min}}^{\star}} \rceil$, since the largest possible value of $i$ such that $\sum_{j=1}^{i+1} \Delta_j \leq T$ is $\lfloor \frac{T}{\Delta_{\theta; \mathrm{min}}^{\star}}\rfloor$.  Assumption \ref{assumption:polynomial} implies that the number of $\bigstar$ oracle calls the bisection method makes in Step 8 of Algorithm \ref{alg:Approx_Manifold} to find a value $\hat{\theta}$ for which $\Delta_{i+1} := \Delta^\star_{\hat{\theta};{(x_i^\dagger,v_i^\dagger)}}$ satisfies $\delta - \frac{\theta^{k}}{\sqrt{\mathfrak{M}_2}} \leq \Delta_{i+1} \leq \delta$ is at most $\mathcal{O}(\log(\frac{\sqrt{\mathfrak{M}_2}}{\sqrt{\mathfrak{m}_2}} \times \frac{\sqrt{\mathfrak{M}_2}}{\theta^{k}}))$.  Step 9 makes exactly one call of the oracle $\bigstar$. \\

\textbf{Bounding the error:} Fix $i \in \mathbb{N}$ and let $x_i^\dagger$, $v_i^\dagger$, $\Delta_i$, $\theta$, and $\hat{\theta}$ be as in Algorithm \ref{alg:Approx_Manifold}. Define $(x_i, v_i) := \psi_{(x_{i-1}^\dagger,v_{i-1}^\dagger)}(\Delta_i)$, and  $\bar{v_i} := \phi(v_i;x_i, x_i^\dagger).$ By Inequality \ref{IneqBetaAssumption},  we have $\|v_i^\dagger - \bar{v_i}\| \leq \beta \cdot \sqrt{\mathfrak{M}_2} \cdot \theta^{k} \cdot \Delta_i$. Therefore, comparing to a triangle in a sphere of radius $\frac{1}{\sqrt{\mathfrak{M}_2}}$ by the Toponogov triangle comparison theorem (see Chapter 11 of \cite{petersen2006riemannian}), we have 
\begin{equation} \label{ApproxMainTopo1}
\mathrm{dist}(\gamma_{(x_i^\dagger,\bar{v_i})}(t), \gamma_{(x_{i}^{\dagger},v_{i}^{\dagger})}(t))\leq t \cdot \|v_i^\dagger - \bar{v_i}\| \leq t \cdot \beta \cdot \sqrt{\mathfrak{M}_2} \cdot \theta^{k} \cdot \Delta_i
\end{equation}
for all $0\leq t \leq \frac{\uppi}{2 \sqrt{\mathfrak{M}_2}}$.

By Inequality \ref{IneqAlphaAssumption} we have $\mathrm{dist}(x_i, x_i^\dagger) \leq \alpha \cdot \theta^{k} \cdot \Delta_i$.  Therefore, by Lemma \ref{thm:Global_Rauch},
\begin{equation} \label{ApproxMainTopo2}
\mathrm{dist}(\gamma_{(x_i, v_i)}(t), \gamma_{(x_i^\dagger,\bar{v_i})}(t))\leq \mathrm{dist}(x_i, x_i^\dagger) \leq \alpha \cdot \theta^{k} \cdot \Delta_i
\end{equation}
for all $0\leq t \leq \frac{\uppi}{2 \sqrt{\mathfrak{M}_2}}$.

Define the ``remaining time after the $i$'th step" to be $T_i:= \sum_{j=i+1}^{i_\mathrm{max}+1} \Delta_j$. Then
\be
\mathrm{dist}(\gamma_{(x_0,v_0)}(T), \gamma_{\theta;(x_0,v_0)}^\dagger(T)) &\leq \mathrm{dist}(\gamma_{(x_0,v_0)}(T),\gamma_{(x_0,v_0)}(T_{0}) ) \\
& \qquad +  \mathrm{dist}(\gamma_{(x_0,v_0)}(T_0), x^\dagger_{i_\mathrm{max}})\\
&\leq T-T_0 + \mathrm{dist}(\gamma_{(x_0,v_0)}(T_0), x^\dagger_{i_\mathrm{max}})\\
&\leq T - T_0+ \sum_{i=1}^{i_\mathrm{max}+1} (\mathrm{dist}(\gamma_{(x_i, v_i)}(T_i), \gamma_{(x_i^\dagger,\bar{v_i})}(T_i)) \\
& \qquad + \mathrm{dist}(\gamma_{(x_{i}^{\dagger},\bar{v_i})}(T_i), \gamma_{(x_i^\dagger,v_{i}^{\dagger})}(T_i)))\\
&\leq \frac{\theta^{k}}{\sqrt{\mathfrak{M}_2}} + \sum_{i=1}^{i_\mathrm{max}+1} ( \alpha \cdot \theta^{k} \cdot \Delta_i + T_i \cdot \beta \cdot \sqrt{\mathfrak{M}_2} \cdot \theta^{k} \cdot \Delta_i )\\
&\leq \frac{\theta^{k}}{\sqrt{\mathfrak{M}_2}} + [  \alpha \cdot \theta^{k} + T_0 \cdot \beta \cdot \sqrt{\mathfrak{M}_2} \cdot \theta^{k}] \cdot \sum_{i=0}^{i_\mathrm{max}+1}  \Delta_i\\
&\leq \frac{\theta^{k}}{\sqrt{\mathfrak{M}_2}} + [  \alpha \cdot \theta^{k} + T_0 \cdot \beta \cdot \sqrt{\mathfrak{M}_2} \cdot \theta^{k}] \cdot T_0\\
&\leq \frac{\theta^{k}}{\sqrt{\mathfrak{M}_2}} +  [\alpha + \frac{\uppi}{2\sqrt{\mathfrak{M}_2}} \cdot \beta \cdot \sqrt{\mathfrak{M}_2}] \cdot \frac{\uppi}{2\sqrt{\mathfrak{M}_2}} \cdot \theta^{k},\\
&= \bigg[1 + \frac{\uppi}{2}\alpha + (\frac{\uppi}{2})^2 \beta\bigg] \cdot \frac{1}{\sqrt{\mathfrak{M}_2}} \cdot \theta^{k}.
\ee
The second inequality holds because $\gamma_{(x_0,v_0)}(t)$ has unit velocity for every $t\in \mathbb{R}$. The third inequality is the triangle inequality (see Figure \ref{fig:Toponogov}). The bound on $T-T_0$ in the fourth inequality is a consequence of Step 8 of Algorithm \ref{alg:Approx_Manifold}, while the bounds on the other two terms follow from Equations \ref{ApproxMainTopo1} and \ref{ApproxMainTopo2}.  The last inequality uses our assumption that $T_0 \leq T \leq \frac{\uppi}{2 \sqrt{\mathfrak{M}_{2}}}$.
\end{proof}

\begin{figure} 
\begin{center}
\includegraphics[scale=0.32]{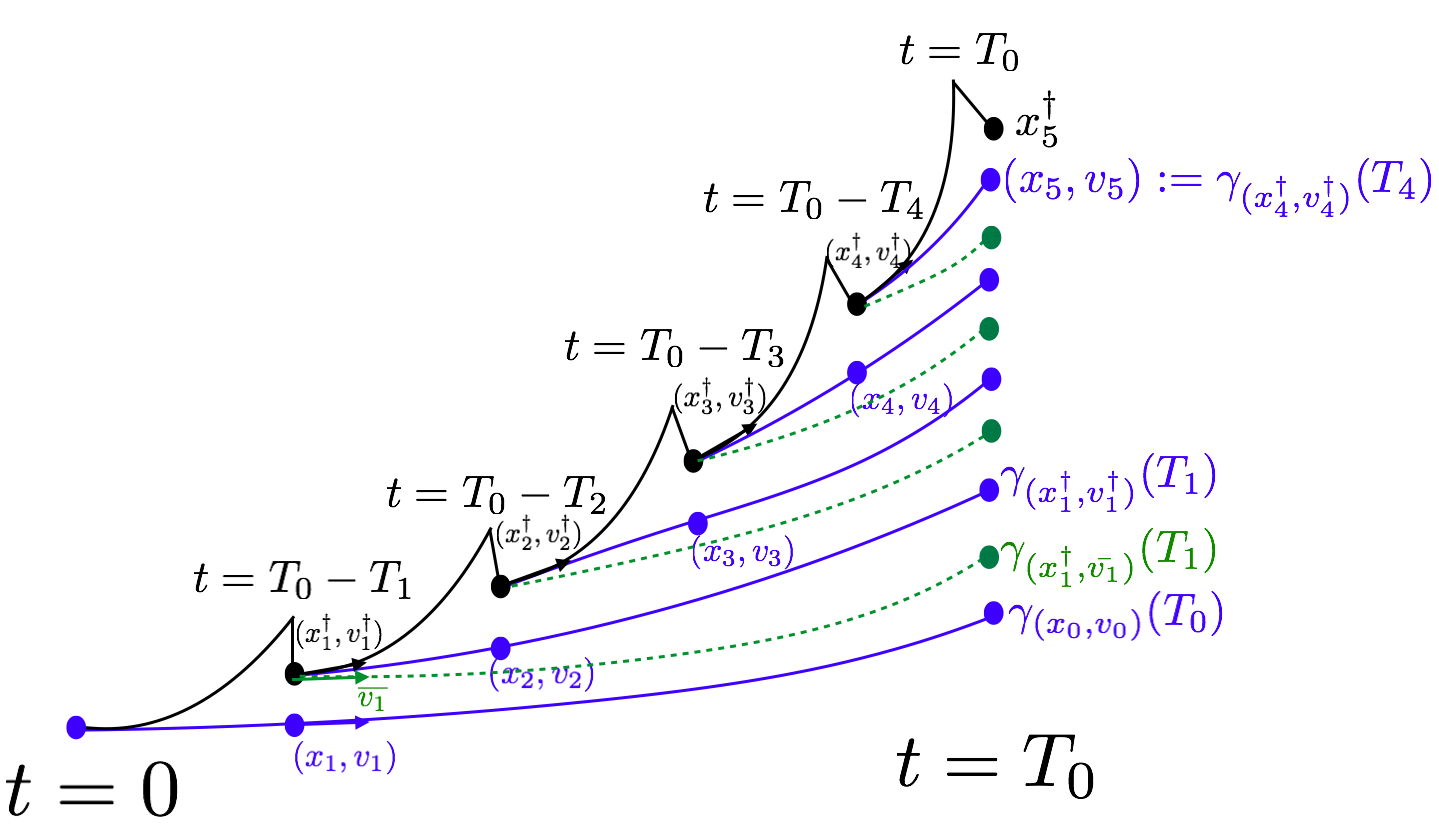}
\end{center}
\caption{This is an illustration of the proof of Lemma \ref{thm:approx_manifold}.  Steps taken by the oracle $\bigstar$ (which is illustrated here as going along a short curved path and then projecting its position $x_i^\dagger := \gamma^\star_{(x_{i-1}^\dagger,v_{i-1}^\dagger)}$ (black dot) and velocity $v_i^\dagger := \gamma^\star_{(x_{i-1}^\dagger,v_{i-1}^\dagger)}$ (black arrow) back onto the phase space) are in black.  The true geodesic paths are blue curves or green dashed curves.  Only the geodesic path $\gamma_{(x_0,v_0)}(t)$ on the bottom belongs to the true geodesic walk.  We imagine the other geodesic paths to help us bound the error.  The distance between the blue dot $x_i$ and black dot $x^\dagger_i$ at each time $t=T_0-T_i$, and the angle between the velocities $\overline{v_i}$ and $v_i^\dagger$, where $v_i$ is the velocity at the blue dot and $\overline{v_i} := \phi(v_i, x_i,x_i^\dagger)$, are bounded because of our assumptions on the accuracy of the oracle $\bigstar$.  The distance from any green dot at $t=T_0$ to the blue dot directly below it is bounded using Lemma \ref{thm:Global_Rauch}.  The distance from that same green dot to the blue dot directly above it is bounded using the Toponogov triangle comparison theorem.}\label{fig:Toponogov}
\end{figure}

We show that the random walk described by Algorithm \ref{alg:Approx_Manifold}  can be used to sample approximately uniformly from $\mathcal{M}$. We first need a simple generic bound:

\begin{lemma} \label{ThmGenericApproxErrorBound}
Let $K$ be a transition kernel on metric space $(\Omega,d)$ with unique stationary measure $\mu$. Assume that there exists some  \textit{contraction coefficient} $\kappa > 0$ so that $K$ satisfies
\begin{equation} \label{SimpAss2}
W_{d}(K(x,\cdot),K(y,\cdot)) \leq (1 - \kappa) d(x,y)
\end{equation}  
for all $x, y \in \Omega$. Let $Q$ be a transition kernel on  $(\Omega,d)$ with stationary measure $\nu$. Assume that there exists some $\delta \geq 0$ so that
\begin{equation} \label{SimpAss1}
\sup_{x \in \Omega} W_{d}(K(x,\cdot), Q(x,\cdot)) < \delta.
\end{equation} 
For fixed $x \in \Omega$ and $Y \sim \mu$, define the \textit{eccentricity} $E(x) = \mathbb{E}[d(x,Y)]$ and assume that $E(x) < \infty$ for all $x \in \Omega$. Then $Q$ satisfies 
\begin{equation} \label{WassContConc1}
W_{d}(Q^{t}(x,\cdot), \mu) \leq (1 - \kappa)^{t} \, E(x) + \frac{\delta}{\kappa}
\end{equation} 
for all $x \in \Omega$ and $t \in \mathbb{N}$. Under the further assumption that $\sup_{x} E(x) < \infty$, we also have 
\begin{equation} \label{WassContConc2}
W_{d}(\mu, \nu) \leq \frac{\delta}{ \kappa}.
\end{equation} 
\end{lemma}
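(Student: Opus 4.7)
The plan is a standard Wasserstein-distance comparison between two Markov kernels, obtained by a triangle inequality together with a telescoping coupling. The two conclusions \eqref{WassContConc1} and \eqref{WassContConc2} are closely related: the first is the key estimate, and the second will follow by integrating the first against $\nu$ and sending $t \to \infty$.

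First I would establish the auxiliary ``one-step lift'' statement: for any probability measure $\rho$ on $\Omega$, the hypothesis \eqref{SimpAss1} implies $W_d(\rho K, \rho Q) \leq \delta$. This follows immediately by taking, for each $x$, the optimal coupling of $K(x,\cdot)$ and $Q(x,\cdot)$ and integrating with respect to $\rho$; any coupling of a mixture is bounded by the mixture of couplings. Similarly, the contraction hypothesis \eqref{SimpAss2} lifts to pairs of measures: $W_d(\rho_1 K, \rho_2 K) \leq (1-\kappa) W_d(\rho_1, \rho_2)$, again by integrating optimal couplings. These two lifted inequalities are the only tools I need.

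Next I would bound $W_d(Q^t(x,\cdot), \mu)$ by the triangle inequality as
\begin{equation*}
W_d(Q^t(x,\cdot), \mu) \;\leq\; W_d(Q^t(x,\cdot), K^t(x,\cdot)) \;+\; W_d(K^t(x,\cdot), \mu).
\end{equation*}
The second term is handled by the contraction hypothesis: since $\mu = \mu K^t$, iterating \eqref{SimpAss2} yields $W_d(K^t(x,\cdot), \mu) \leq (1-\kappa)^t W_d(\delta_x, \mu) \leq (1-\kappa)^t D$. For the first term I would telescope
\begin{equation*}
W_d(Q^t(x,\cdot), K^t(x,\cdot)) \;\leq\; \sum_{j=0}^{t-1} W_d\bigl(K^j Q^{t-j}(x,\cdot),\, K^{j+1} Q^{t-j-1}(x,\cdot)\bigr),
\end{equation*}
bound each summand by $(1-\kappa)^j$ times $W_d(Q\cdot\rho_j, K\cdot\rho_j) \leq \delta$ with $\rho_j = Q^{t-j-1}(x,\cdot)$ using the two lifted inequalities, and finally sum the geometric series to get $\delta/\kappa$. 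Combining yields \eqref{WassContConc1}.

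For \eqref{WassContConc2}, I would use that $\nu$ is $Q$-stationary, so $\nu = \int \nu(dx)\, Q^t(x,\cdot)$; hence by convexity of $W_d$ and \eqref{WassContConc1},
\begin{equation*}
W_d(\nu,\mu) \;\leq\; \int \nu(dx)\, W_d(Q^t(x,\cdot),\mu) \;\leq\; (1-\kappa)^t D + \tfrac{\delta}{\kappa},
\end{equation*}
and I would let $t \to \infty$ (permissible since $\kappa > 0$ and $D < \infty$). The main obstacle, such as it is, is simply bookkeeping in the telescoping sum and making the two ``lifting'' principles precise; no sharper geometric input is needed beyond the contraction hypothesis \eqref{SimpAss2} and the closeness hypothesis \eqref{SimpAss1}.
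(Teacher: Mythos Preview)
Your argument is correct. The paper proves the same bound by a slightly different (but equivalent) packaging: rather than splitting $W_d(Q^t(x,\cdot),\mu)$ via the triangle inequality and telescoping $W_d(Q^t,K^t)$, the paper directly builds a one-step coupling of $Q(x,\cdot)$ with $K(y,\cdot)$ (via the gluing lemma through an intermediate $X'\sim K(x,\cdot)$) that satisfies $\mathbb{E}[d(X,Y')]\le (1-\kappa)d(x,y)+\delta$, then runs this coupled chain from $(x,Y)$ with $Y\sim\mu$ and iterates the scalar recursion $a_t\le(1-\kappa)a_{t-1}+\delta$. Unrolling that recursion produces exactly your geometric sum $(1-\kappa)^tD+\delta/\kappa$, so the two routes coincide term-by-term. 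Your approach has the mild advantage of working purely at the level of Wasserstein distances (no $\eta$-slack, no explicit Markovian coupling kernel needed), while the paper's coupling formulation is perhaps more concrete; neither buys anything the other does not. Your derivation of \eqref{WassContConc2} by integrating against $\nu$ and sending $t\to\infty$ is exactly what the paper does as well.
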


\begin{proof}
Fix a tolerance $\eta > 0$ and starting points $x,y \in \Omega$. We begin by constructing a coupling of $Q(x,\cdot)$ and $K(y,\cdot)$. 

By Inequality \ref{SimpAss2}, it is possible to couple two random variables $X' \sim K(x,\cdot)$ and $Y' \sim K(y,\cdot)$ so that 
\begin{equation} \label{CoupCon1}
\mathbb{E}[d(X',Y')] \leq (1 - \kappa) d(x,y) + \eta.
\end{equation} 
Furthermore, by Inequality \ref{SimpAss1} and the standard gluing lemma (see Chapter 1 of \cite{villani2008optimal}), it is possible to couple $X',Y'$ to the random variable $X \sim Q(x,\cdot)$ so that 
\begin{equation} \label{CoupCon2}
\mathbb{E}[d(X,X')] \leq \delta + \eta.
\end{equation} 
Combining Inequalities \ref{CoupCon1} and \ref{CoupCon2}, it is possible to couple $X \sim Q(x,\cdot)$ and $Y' \sim K(y,\cdot)$ so that 
\be  \label{IneqContM}
\mathbb{E}[d(X,Y')] &\leq \mathbb{E}[d(X,X')] + \mathbb{E}[d(X',Y')] \\
&\leq \delta + (1-\kappa)d(x,y) + 2 \eta.
\ee

We denote by $M$ the kernel on $\Omega^{2}$ given by this coupling of $Q(x,\cdot)$ and $K(y,\cdot)$.

Let $x \in \Omega$, and let $Y \sim \mu$. Let $\{(X_{t},Y_{t})\}_{t \geq 0}$ be a Markov chain evolving according to the kernel $M$ with initial conditions $(X_{0},Y_{0}) = (x,Y)$. By Inequality \eqref{IneqContM}, we have for all $t \in \mathbb{N}$ that 
\begin{align*} 
\mathbb{E}[d(X_{t},Y_{t})] &\leq \mathbb{E}[ (1 - \kappa) \mathbb{E}[d(X_{t-1},Y_{t-1})] + \delta + 2 \eta] \\
&\leq (1-\kappa)^{2} \mathbb{E}[d(X_{t-2},Y_{t-2})] + (\delta + 2 \eta)(1 + (1 - \kappa)) \\
& \leq \ldots \\
&\leq (1 - \kappa)^{t} \mathbb{E}[d(X_{0},Y_{0})] + \frac{\delta + 2 \eta}{\kappa}.
\end{align*} 
Since this holds for arbitrary $\eta > 0$ and $Y_{t} \sim \mu$ for all $t \in \mathbb{N}$, we have 
\begin{equation} 
W_{d}(Q^{t}(x,\cdot), \mu) \leq (1 -\kappa)^{t} \, E(x) + \frac{\delta}{\kappa}.
\end{equation} 
This completes the proof of Inequality \ref{WassContConc1}. Inequality \ref{WassContConc2} follows immediately from letting $t$ go to infinity.
\end{proof}

We apply this bound:

\begin{lemma} \label{IneqNumStepsAppr}
Let $\epsilon >0$.  Set $T = \frac{\uppi}{2 \sqrt{\mathfrak{M}_{2}}}$ and set
\be \label{EqThetaIDef}
 \theta(\epsilon) &= \min \left(\left[ \epsilon \cdot \sqrt{\mathfrak{M}_2} \cdot \frac{1 - \cos(\frac{\uppi \sqrt{\mathfrak{m}_{2}}}{2 \sqrt{\mathfrak{M}_{2}}})}{2 (1 + \frac{\uppi}{2}\alpha + (\frac{\uppi}{2})^2\beta)} \right]^{\frac{1}{k}}, \, \, \, \, \, \, \, \frac{1}{70} \frac{\mathfrak{m}_2}{\mathfrak{M}_2} \right), \\
I(\epsilon) &= \left \lceil \frac{\log(\frac{\epsilon \, \sqrt{\mathfrak{m}_{2}} }{2 \uppi})}{\log(\cos(\frac{\uppi \sqrt{\mathfrak{m}_{2}}}{2 \sqrt{\mathfrak{M}_{2}}}))} \right\rceil.
\ee

Then the distribution $\Pi_{i}^{\theta(\epsilon)}$ of the $i$'th step of the chain described in Equation \ref{EqDefGeodesicWalkApprox} satisfies 
\begin{equation}
W_{d}(\Pi,\Pi^{\theta(\epsilon)}_{i}) \leq \epsilon
\end{equation}
for all $i > I(\epsilon)$. 
\end{lemma}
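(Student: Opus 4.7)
The plan is to apply the abstract comparison bound from Lemma \ref{ThmGenericApproxErrorBound} with the true geodesic walk kernel $\mathbb{K}$ playing the role of $K$ (so $\mu = \Pi$) and with the one-step kernel $Q$ of the approximate walk from Equation \ref{EqDefGeodesicWalkApprox}. To feed Lemma \ref{ThmGenericApproxErrorBound} the three ingredients it needs, I would identify the contraction coefficient, the one-step approximation error, and the diameter bound.

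First, for the contraction coefficient $\kappa$: Theorem \ref{thm:contraction}, combined with the standard fact that a coupling bound implies a Wasserstein bound on one-step distributions, gives $W_d(\mathbb{K}(x,\cdot),\mathbb{K}(y,\cdot)) \le \cos(\sqrt{\mathfrak{m}_2}T)\,d(x,y)$, so Inequality \ref{SimpAss2} is satisfied with $\kappa = 1 - \cos(\sqrt{\mathfrak{m}_2}T)$. Plugging in $T = \frac{\uppi}{2\sqrt{\mathfrak{M}_2}}$ yields $\kappa = 1 - \cos\!\bigl(\frac{\uppi\sqrt{\mathfrak{m}_2}}{2\sqrt{\mathfrak{M}_2}}\bigr)$. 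Second, for the one-step error $\delta$: couple $\mathbb{K}(x,\cdot)$ and $Q(x,\cdot)$ by using the same uniform velocity $U \sim \mathrm{Unif}(\mathbb{S}(\mathcal{T}_x))$ to drive both steps. Then the couple is $(\gamma_{(x,U)}(T),\gamma^{\dagger}_{\theta;(x,U)}(T))$, and Lemma \ref{thm:approx_manifold} bounds the distance pointwise (hence in expectation) by $[1 + \frac{\uppi}{2}\alpha + (\frac{\uppi}{2})^2\beta]\cdot \theta/\sqrt{\mathfrak{M}_2}$. So Inequality \ref{SimpAss1} holds with $\delta = [1 + \frac{\uppi}{2}\alpha + (\frac{\uppi}{2})^2\beta]\cdot \theta/\sqrt{\mathfrak{M}_2}$. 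Third, for the diameter, the Myers--Bonnet bound \ref{IneqDiameterMyersBonnet} gives $D \le \uppi/\sqrt{\mathfrak{m}_2}$.

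With these three inputs, Lemma \ref{ThmGenericApproxErrorBound} yields
\begin{equation*}
W_d(\Pi_i^\theta,\Pi) \;\le\; \bigl(\cos(\sqrt{\mathfrak{m}_2}T)\bigr)^{i}\,\tfrac{\uppi}{\sqrt{\mathfrak{m}_2}} \;+\; \frac{[1 + \frac{\uppi}{2}\alpha + (\frac{\uppi}{2})^2\beta]\,\theta}{\sqrt{\mathfrak{M}_2}\,\bigl(1 - \cos(\frac{\uppi\sqrt{\mathfrak{m}_2}}{2\sqrt{\mathfrak{M}_2}})\bigr)}.
\end{equation*}
The remainder is just algebra: the choice $\theta(\epsilon)$ in the statement is precisely engineered so that the second term equals $\epsilon/2$, and then solving $(\cos(\sqrt{\mathfrak{m}_2}T))^i \cdot \uppi/\sqrt{\mathfrak{m}_2} \le \epsilon/2$ for $i$ (taking logarithms, noting $\log\cos(\sqrt{\mathfrak{m}_2}T) < 0$) gives the threshold $i > I(\epsilon)$ displayed in the lemma. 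Summing the two halves bounds the total by $\epsilon$.

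There is no real obstacle here: all of the work has been done in Lemma \ref{thm:approx_manifold} (giving the per-step accuracy), Theorem \ref{thm:contraction} (giving contraction), and Lemma \ref{ThmGenericApproxErrorBound} (turning these two into a mixing statement for the perturbed chain). The mildest subtlety to verify is that the parallel-transport coupling in Definition \ref{DefCouplingConstruction}, used implicitly when writing $U_i' = \phi(U_i;X_i,X_i^\theta)$ in Equation \ref{EqDefGeodesicWalkApprox}, is consistent with the one-velocity coupling of $\mathbb{K}(x,\cdot)$ and $Q(x,\cdot)$ employed above when bounding $\delta$; but since $\delta$ controls only a single step from a common starting point, this reduces to the already-established equidistribution of parallel-transported velocities proved in Theorem \ref{thm:coupling}.
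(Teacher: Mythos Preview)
Your proposal is correct and follows essentially the same route as the paper: apply Lemma \ref{ThmGenericApproxErrorBound} with $K=\mathbb{K}$, $Q$ the approximate-walk kernel, $\kappa = 1-\cos\bigl(\tfrac{\uppi\sqrt{\mathfrak{m}_2}}{2\sqrt{\mathfrak{M}_2}}\bigr)$ from Theorem \ref{thm:contraction}, $\delta = [1+\tfrac{\uppi}{2}\alpha+(\tfrac{\uppi}{2})^2\beta]\,\theta/\sqrt{\mathfrak{M}_2}$ from Lemma \ref{thm:approx_manifold}, and $D\le \uppi/\sqrt{\mathfrak{m}_2}$ from Myers--Bonnet, then solve for $\theta(\epsilon)$ and $I(\epsilon)$. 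Your write-up is in fact more explicit than the paper's about the single-step coupling used to verify the $\delta$ bound and about the final algebra, but the argument is the same.
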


\begin{proof}

We apply Lemma \ref{ThmGenericApproxErrorBound}, with $K = \mathbb{K}$ and $Q$ the kernel of the Markov chain in Equation \ref{EqDefGeodesicWalkApprox} with parameters $\theta = \theta(\epsilon)$ and $T = \frac{\uppi}{2 \sqrt{\mathfrak{M}_{2}}}$. We keep the notation of Lemma \ref{ThmGenericApproxErrorBound} and check that the assumptions are satisfied.

By Theorem \ref{thm:contraction}, Inequality \ref{SimpAss2} is satisfied with
\begin{equation*}
\kappa = 1 - \cos(\frac{\uppi \, \sqrt{\mathfrak{m}_{2}}}{2 \sqrt{\mathfrak{M}_{2}}}).
\end{equation*}
By Lemma \ref{thm:approx_manifold}, Inequality  \ref{SimpAss1} is satisfied with
\begin{equation*}
\delta = \frac{1 + \frac{\uppi}{2}\alpha + (\frac{\uppi}{2})^2\beta}{\sqrt{\mathfrak{M}_2}} \cdot \theta.
\end{equation*}
Finally, by Inequality \ref{IneqDiameterMyersBonnet}, the diameter of $\mathcal{M}$ satisfies 
\begin{equation*}
D \leq \frac{\pi}{\sqrt{\mathfrak{m}_2}}.
\end{equation*}

Applying Lemma \ref{ThmGenericApproxErrorBound} with these values of $\kappa, \delta$ and $D$ completes the proof of this theorem.

\end{proof}

This immediately gives the following dimension-free bound on the number of oracle calls required to obtain a sample from the uniform distribution on $\mathcal{M}$ with error less than some fixed $\epsilon > 0$:

\begin{thm}\label{thm:e1}
Fix $\epsilon > 0$ and let $\theta(\epsilon), I(\epsilon)$ be as in Equation \ref{EqThetaIDef}. Also let $X_{I(\epsilon) + 1}^{\theta(\epsilon)}$ be the  $(I(\epsilon)+1)$'st point generated by Equation \ref{EqDefGeodesicWalkApprox} with parameters $\theta = \theta(\epsilon)$, $T = \frac{\uppi}{2 \sqrt{\mathfrak{M}_{2}}}$.

Then the distribution $\Pi_{I(\epsilon) + 1}^{\theta(\epsilon)}$ of $X_{I(\epsilon) + 1}^{\theta(\epsilon)}$ satisfies
\begin{equation} \label{IneqPenConc1}
W_{d}(\Pi_{I(\epsilon) + 1}^{\theta(\epsilon)}, \Pi) \leq \epsilon,
\end{equation}

and the number $N(\epsilon)$ of $\bigstar$ oracle calls used to generate $X_{I(\epsilon) + 1}^{\theta(\epsilon)}$ satisfies 
\be \label{IneqPenConc2}
 N(\epsilon) \leq& \frac{\log(\frac{\epsilon \, \sqrt{\mathfrak{m}_{2}} }{2 \uppi})}{\log(\cos(\frac{\uppi \sqrt{\mathfrak{m}_{2}}}{2 \sqrt{\mathfrak{M}_{2}}}))} \times \bigg( \lceil \frac{\uppi}{2 \, \sqrt{\mathfrak{M}_{2}} \, \Delta^\star_{\theta(\epsilon); \mathrm{min}}} \rceil + \mathcal{O}(\log(\frac{\sqrt{\mathfrak{M}_2}}{\sqrt{\mathfrak{m}_2}} \times \frac{\sqrt{\mathfrak{M}_2}}{\theta^{k}(\epsilon)})) \bigg)\\
= &\mathcal{O}^*\bigg(\frac{\mathfrak{M}_2}{\mathfrak{m}_2} \times \frac{1}{\sqrt{\mathfrak{M}_2} \Delta^\star_{\theta(\epsilon); \mathrm{min}}}\bigg).
\ee

\end{thm}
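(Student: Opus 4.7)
The plan is to derive Theorem \ref{thm:e1} by straightforwardly combining the two previous lemmas in Section \ref{sec:Approx_Manifold}. Inequality \ref{IneqPenConc1} is essentially the conclusion of Lemma \ref{IneqNumStepsAppr} applied at $i = I(\epsilon) + 1$, so the bulk of the argument is really about the oracle-call count in Inequality \ref{IneqPenConc2}.

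First I would note that $X_{I(\epsilon)+1}^{\theta(\epsilon)}$ is obtained by iterating the recursion in Equation \ref{EqDefGeodesicWalkApprox} a total of $I(\epsilon)+1$ times, and each iteration corresponds to one invocation of Algorithm \ref{alg:Approx_Manifold} with step size $T = \frac{\uppi}{2\sqrt{\mathfrak{M}_2}}$ and accuracy parameter $\theta(\epsilon)$. By Lemma \ref{thm:approx_manifold}, each invocation requires at most
\[
\Big\lceil \frac{\uppi}{2 \sqrt{\mathfrak{M}_{2}} \, \Delta_{\theta(\epsilon); \mathrm{min}}} \Big\rceil + \mathcal{O}\!\left(\log\!\left(\tfrac{\sqrt{\mathfrak{M}_2}}{\sqrt{\mathfrak{m}_2}}\right)\right)
\]
calls to $\bigstar$. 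Multiplying by the number of outer steps $I(\epsilon)+1$ produces the bound claimed in Inequality \ref{IneqPenConc2}. To re-express this as $\mathcal{O}^*\!\left(\frac{\mathfrak{M}_2}{\mathfrak{m}_2} \cdot \frac{1}{\sqrt{\mathfrak{M}_2}\Delta_{\theta(\epsilon); \min}}\right)$, I would use the Taylor expansion $\log \cos(z) = -\tfrac{z^2}{2} + O(z^4)$ applied to $z = \frac{\uppi \sqrt{\mathfrak{m}_2}}{2\sqrt{\mathfrak{M}_2}}$ to see that $I(\epsilon) = \mathcal{O}^*(\mathfrak{M}_2/\mathfrak{m}_2)$, and then absorb the additive logarithmic bisection cost into the $\mathcal{O}^*$.

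For Inequality \ref{IneqPenConc1}, I would simply invoke Lemma \ref{IneqNumStepsAppr} with the stated choice of $\theta(\epsilon)$, $I(\epsilon)$, and $T$, which directly yields $W_d(\Pi^{\theta(\epsilon)}_{I(\epsilon)+1}, \Pi) \leq \epsilon$ because $I(\epsilon)+1 > I(\epsilon)$.

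The main obstacle, if any, is purely bookkeeping: one must verify that the logarithmic bisection overhead from Step 7 of Algorithm \ref{alg:Approx_Manifold} — which depends on $\sqrt{\mathfrak{M}_2}/\sqrt{\mathfrak{m}_2}$ but not on $\Delta_{\theta(\epsilon);\min}$ — is genuinely dominated by the leading $\lceil T/\Delta_{\theta(\epsilon);\min}\rceil$ term, or at worst is subsumed by the $\mathcal{O}^*$ notation; this is transparent from Assumption \ref{assumption:polynomial}. The rest of the proof is just a one- or two-line citation of the two preceding lemmas.
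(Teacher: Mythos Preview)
Your proposal is correct and follows essentially the same route as the paper: Inequality \ref{IneqPenConc1} is a direct citation of Lemma \ref{IneqNumStepsAppr}, and Inequality \ref{IneqPenConc2} is obtained by multiplying the per-step oracle count from Lemma \ref{thm:approx_manifold} by the number $I(\epsilon)$ of outer iterations, exactly as you describe. The only cosmetic difference is that the paper writes $I(\epsilon)$ rather than $I(\epsilon)+1$ for the outer count and does not spell out the Taylor expansion of $\log\cos(z)$, but your version is, if anything, slightly more careful on both points.
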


\begin{proof}

Inequality \ref{IneqPenConc1} follows immediately from Lemma \ref{IneqNumStepsAppr}.

By Lemma \ref{thm:approx_manifold}, 
\be
N(\epsilon) &\leq I(\epsilon) \times \bigg(\lceil \frac{T}{\Delta^\star_{\theta(\epsilon); \mathrm{min}}}\rceil + \mathcal{O}(\log(\frac{\sqrt{\mathfrak{M}_2}}{\sqrt{\mathfrak{m}_2}} \times \frac{\sqrt{\mathfrak{M}_2}}{\theta^{k}(\epsilon)}))) \bigg) \\
&\leq \frac{\log(\frac{\epsilon \, \sqrt{\mathfrak{m}_{2}} }{2 \uppi})}{\log(\cos(\frac{\uppi \sqrt{\mathfrak{m}_{2}}}{2 \sqrt{\mathfrak{M}_{2}}}))} \times \bigg( \lceil \frac{\uppi}{2 \, \sqrt{\mathfrak{M}_{2}} \, \Delta^\star_{\theta(\epsilon); \mathrm{min}}} \rceil + \mathcal{O}(\log(\frac{\sqrt{\mathfrak{M}_2}}{\sqrt{\mathfrak{m}_2}}\times \frac{\sqrt{\mathfrak{M}_2}}{\theta^{k}(\epsilon)}))) \bigg).
\ee
This completes the proof of the Theorem.

\end{proof}

\subsection{Approximating Geodesics on Convex body Boundaries}\label{sec:Approx_Convex}

In this section we show how to construct an oracle that satisfies the requirements of Lemma \ref{thm:approx_manifold} in the important special case when $\mathcal{M} = \partial \mathcal{K}$ is the boundary of a convex body $\mathcal{K}$.  To build our oracle $\bigstar$ (Algorithm \ref{alg:Approx_Convex}), we use only the same basic inclusion-exclusion oracle and oracle for the normal line to $\partial \mathcal{K}$ used in the stochastic billiards algorithm of \cite{billiards}.  In particular, one can construct an intersection oracle that returns the intersection of any line with $\partial \mathcal{K}$ using the bisection method and the inclusion-exclusion oracle, which we will use to build $\bigstar$.

 In the remainder of the paper, for any line segment $\ell_{a,b} \subset \mathbb{R}^{d+1}$ with endpoints $a,b \in \mathbb{R}^{d+1}$, we denote its Euclidean length by $\mathrm{length}(\ell_{a,b}) := \|b-a\|$.  For every $y\in \mathcal{M} \equiv \partial \mathcal{K}$, define $\mathbbm{n}(y)$ to be the unit normal vector to $\mathcal{M}$ at $y$ pointing into $\mathcal{K}$, and denote by ``$\mathrm{D}_{\mathsf{u}} w(y)$" the directional derivative in the direction ${\mathsf{u}}$ of a vector-valued function $w: \mathcal{M} \mapsto \mathbb{R}^{d+1}$.  We will denote by $\mathbbm{n}^{\perp}(\cdot)$ the unique codimension-$1$ subspace orthogonal to $\mathbbm{n}(\cdot)$.

\begin{algorithm}[H]
\caption{An oracle $\bigstar$ in the special case where $\mathcal{M}$ is the boundary of a convex body $\mathcal{K}$ \label{alg:Approx_Convex}}
\flushleft
 \textbf{input:} Intersection oracle for the convex body $\mathcal{K}$.\\
 \textbf{input:} Oracle for the normal vector of $\mathcal{M} = \partial \mathcal{K}$. \\
 \textbf{input:} Initial point $x\in \mathcal{M}$, initial velocity $v\in \mathcal{T}_x \mathcal{M}$, parameter $\theta > 0$ .\\
 \textbf{output:} $\Delta^\star_{\theta; (x,v)}$, $\psi^\star_{\theta;(x,v)} := (\gamma_{\theta;(x,v)}^\star, \varphi_{\theta;(x,v)}^\star)$.
\begin{algorithmic}[1]
\\ Generate the unique unit vector $u$ pointing into $\mathcal{K}$ at $x$ at an angle $\theta$ to the tangent plane $\mathcal{T}_x\mathcal{M}$, whose projection $\tilde{u}$ onto $\mathcal{T}_x \mathcal{M}$ satisfies $\measuredangle(\tilde{u},v)=0$ (using the normal vector oracle).
\\ Let $\ell := \{su: s \in \mathbb{R}\}$ be the line passing through $x$ in the direction $u$, and let $P$ be the $2$-plane spanned by $v$ and $u$. 
\\ Solve for the other intersection point $x_1^\dagger$ in $\ell \cap \mathcal{M}$ by calling  the intersection oracle. 
\\ Set $\tilde{v}_1$ to be the projection of $u$ onto the 1-dimensional subspace $Q:= \mathbbm{n}^\perp(x_1^\dagger) \cap P$, and set $v_1^\dagger = \frac{\tilde{v}_{1}}{\| \tilde{v}_{1}\|}$.
\\ Set $(\gamma_{\theta;(x,v)}^\star, \varphi_{\theta;(x,v)}^\star) = (x_1^\dagger, v_1^\dagger)$.
\\ Set $\Delta^\star_{\theta; (x,v)} = \|x_1^\dagger - x\|$.
\end{algorithmic}
\end{algorithm}

In this section we will make a slightly strengthened assumption about the curvature.  Towards this end, we define the inner radius of curvature at a point $x \in \partial \mathcal{K}$ to be the radius of the largest sphere in $\mathbb{R}^{d+1}$ that is tangent to $\partial \mathcal{K}$ at $x$ and contained in $\mathcal{K}$.  Similarly, we define the outer radius of curvature to be radius of the smallest sphere that is tangent to $\partial \mathcal{K}$ at $x$ that contains $\mathcal{K}$.
\begin{assumption}\label{assumption:radius}
$\mathcal{M} = \partial \mathcal{K}$ has inner radius of curvature uniformly bounded below by $\frac{1}{\sqrt{\mathfrak{M}_2}}$ and outer radius of curvature uniformly bounded above by $\frac{1}{\sqrt{\mathfrak{m}_2}}$.
\end{assumption}
In particular, the above assumptions on the inner and outer radii of curvature imply our previous assumptions that the sectional curvature is bounded above and below by $\mathfrak{M}_2$ and $\mathfrak{m}_2$, respectively, although the converse is not true.

The following Lemma (Lemma \ref{thm:chord}) relates the geodesic distance on a convex body to the Euclidean distance in the ambient space.  We will use Lemma \ref{thm:chord} to prove the main result of this section (Lemma \ref{thm:approx_convex}).

Before we state Lemma \ref{thm:chord}, we define $\mathrm{dist}_\mathcal{N}(x,y)$ to be the geodesic distance between two points $x,y \in \mathcal{N}$ in a manifold $\mathcal{N}$.  For every set $S = \{x,y\}$ consisting of two distinct points $x,y \in \mathcal{N},$ we also define $\mathrm{dist}_\mathcal{N}(S) := \mathrm{dist}_\mathcal{N}(x,y)$.

\begin{lemma}\label{thm:chord}
Let $q^{(1)}$ and $q^{(2)}$ be two points on the boundary $\partial \mathcal{K}$ of a convex body $\mathcal{K}$, with inner and outer radii of curvature bounded below and above by $\frac{1}{\sqrt{\mathfrak{M}_2}}$ and $\frac{1}{\sqrt{\mathfrak{m}_2}}$, respectively.  Then  
\begin{equation}
\mathrm{dist}_{\partial \mathcal{K}}(q^{(1)},q^{(2)}) \leq 2\uppi \frac{\sqrt{\mathfrak{M}_2}}{\sqrt{\mathfrak{m_2}}} \|q^{(2)} - q^{(1)}\|
\end{equation}
\end{lemma}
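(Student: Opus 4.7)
My plan is to reduce the problem to a $2$-dimensional planar slice of $\mathcal{K}$, then split into two cases based on the chord length $L := \|q^{(2)} - q^{(1)}\|$. Throughout I will write $\rho_1 := 1/\sqrt{\mathfrak{M}_2}$ and $\rho_2 := 1/\sqrt{\mathfrak{m}_2}$ for the lower bound on the inner and upper bound on the outer radius of curvature, respectively, so the target bound reads $\mathrm{dist}_{\partial \mathcal{K}}(q^{(1)}, q^{(2)}) \leq 2\pi (\rho_2/\rho_1) L$. First, I would consider the $2$-plane $\Pi$ through $q^{(1)}$, $q^{(2)}$, and the outward unit normal $n^{(1)}$ at $q^{(1)}$, and set $C := \mathcal{K} \cap \Pi$ with boundary $\partial C \subset \partial \mathcal{K}$. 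Because $\Pi$ contains the entire normal line at $q^{(1)}$, both tangent balls at $q^{(1)}$ (inner of radius $\rho_1$ and outer of radius $\rho_2$) intersect $\Pi$ in full-radius disks $B_1^{2D}$ and $D_2$, satisfying $B_1^{2D} \subset C \subset D_2$ and both tangent to $\partial C$ at $q^{(1)}$. The key observation is that any arc of $\partial C$ from $q^{(1)}$ to $q^{(2)}$ is a curve on $\partial \mathcal{K}$, and therefore upper-bounds $\mathrm{dist}_{\partial \mathcal{K}}(q^{(1)}, q^{(2)})$.

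For $L \geq \rho_1$, I would invoke Myers-Bonnet (as in Inequality \ref{IneqDiameterMyersBonnet}) to get $\mathrm{dist}_{\partial \mathcal{K}}(q^{(1)}, q^{(2)}) \leq \pi \rho_2 \leq 2\pi (\rho_2/\rho_1) L$. For $L < \rho_1$, I would introduce planar coordinates on $\Pi$ with $q^{(1)}$ at the origin and $n^{(1)}$ along the positive $y$-axis. The outer-ball containment then places $\mathcal{K}$ in the half-plane $\{y \leq 0\}$, and, writing $q^{(2)} = (x_2, y_2)$, the inner-ball condition $\|q^{(2)} - (0,-\rho_1)\| \geq \rho_1$ rearranges to $|y_2| \leq L^2/(2\rho_1) < L/2 < \rho_1/2$. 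This pincer forces $q^{(2)}$ onto the upper branch of $\partial C$, which near $q^{(1)}$ is the graph of a concave function $y = f(x)$ with $f(0) = 0 = f'(0)$. Concavity and $f'(0)=0$ give $f'(x) \leq 0$ on $[0, x_2]$ (taking $x_2 \geq 0$ WLOG), so the arc length along this graph is bounded by
\[
\int_0^{x_2} \sqrt{1 + f'(x)^2}\, dx \;\leq\; \int_0^{x_2}\bigl(1 + |f'(x)|\bigr)\, dx \;=\; x_2 + |y_2| \;\leq\; \sqrt{2}\, L \;\leq\; 2\pi (\rho_2/\rho_1)\, L,
\]
using $\sqrt{1+a^2} \leq 1 + |a|$, the fundamental theorem of calculus, and $\rho_2 \geq \rho_1$.

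The main technical subtlety I anticipate is the verification that, in the small-$L$ case, $q^{(2)}$ actually lies on the upper branch of $\partial C$ rather than on the far branch that wraps around $B_1^{2D}$. This is exactly where the two pointwise curvature bounds must be combined: the outer-ball bound confines $q^{(2)}$ to the half-plane $\{y \leq 0\}$, while the inner-ball bound confines $y_2$ to the strip $[-L^2/(2\rho_1), 0]$. Since $L < \rho_1$ makes this strip strictly thinner than $(-\rho_1, 0]$, whereas the lower branch of $\partial C$ lies below $y = -\rho_1$ at every abscissa $|x| \leq \rho_1$ (because $B_1^{2D} \subset C$ forces $\partial C$ outside the disk $\partial B_1^{2D}$, whose lower semicircle lies below $y = -\rho_1$), the point $q^{(2)}$ is necessarily on the upper branch. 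Once this geometric placement is established, the remainder collapses to the elementary concave-graph estimate displayed above.
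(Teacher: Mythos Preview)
Your argument is correct, but it proceeds quite differently from the paper's. Both proofs slice $\mathcal{K}$ by a $2$-plane through $q^{(1)}$, $q^{(2)}$ and a normal vector (the paper takes the normal at $q^{(2)}$, you take it at $q^{(1)}$), and both exploit that the boundary of this planar slice sits inside $\partial\mathcal{K}$. From there the methods diverge. The paper avoids any case split: it nests the slice between the two tangent disks $B\subset C\subset B'$, uses the monotonicity of perimeter for nested planar convex sets to bound the $\partial C$-arc by an arc of $\partial B'$ plus a chord, invokes the circular arc/chord ratio $\leq\pi$, and then applies the homothety $B'=(\rho_2/\rho_1)B$ centred at $q^{(2)}$ to convert the $B'$-chord back into $\|q^{(2)}-q^{(1)}\|$. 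Your approach instead splits on the size of $L$: for $L\geq\rho_1$ you fall back on the Myers--Bonnet diameter bound (Inequality~\ref{IneqDiameterMyersBonnet}), while for $L<\rho_1$ you make the elementary observation that the upper branch of the slice is a concave graph with horizontal tangent at $q^{(1)}$, giving arc length $\leq x_2+|y_2|\leq\sqrt{2}\,L$ with no dependence on $\rho_2/\rho_1$ at all. What the paper's route buys is a single uniform argument that lives entirely in the planar picture and never invokes the global diameter bound; what your route buys is a more elementary estimate in the short-chord regime with a dramatically sharper constant ($\sqrt{2}$ versus $2\pi\rho_2/\rho_1$), at the cost of having to verify carefully that $q^{(2)}$ lands on the upper branch, which you do correctly.
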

\begin{proof}

Let $\ell$ be the line passing through both $q^{(1)}$ and $q^{(2)}$. 
Let $B$ and $B'$ be balls tangent to $\partial \mathcal{K}$ at $q^{(2)}$ of radius $\frac{1}{\sqrt{\mathfrak{M}_2}}$ and $\frac{1}{\sqrt{\mathfrak{m}_2}}$, respectively. By assumption,
\begin{equation}\label{eq:nested}
B \subseteq \mathcal{K} \subseteq B'.
\end{equation}
Since $B$, $\mathcal{K}$, and $B'$ are all convex, we therefore have that (see Figure \ref{fig:chord}),
\begin{equation}\label{eq:b0}
\mathrm{dist}_{\partial \mathcal{K}}(\ell \cap \partial \mathcal{K}) \leq \mathrm{dist}_{\partial \mathcal{K} \cap A}(\ell \cap \partial \mathcal{K})  \leq \mathrm{dist}_{\partial B'}(\ell \cap \partial B') + \mathrm{length}(\ell \cap B') \leq 2\mathrm{dist}_{\partial B'}(\ell \cap \partial B'), 
\end{equation}
where  $A$ is the $2$-plane containing the great circle (i.e. the spherical geodesic) in $B'$ that connects the two points in $\ell \cap \partial B'$.
\begin{figure}
\centering
\begin{tikzpicture}[scale=1.4]

\draw[thick] (-6,-0.7)--(3,1.28);

\begin{scope}
\clip (-1.51,-1.36) circle (3);
\draw[fill={rgb:black,1;white,5}] (-6,-0.7)--(3,1.28)--(-5,5)--cycle;
\end{scope}

\draw[thick] (0.28,0.5) circle (0.42);
\draw[thick] (-1.51,-1.36) circle (3);

\draw [red, thick, domain=-1.118:1.118, samples=100] plot ({sqrt(1-0.8*abs(\x^2))/(1+0.3*\x)},\x);
\draw [red, thick, domain=-1.118:1.118, samples=100] plot ({-sqrt(1-0.8*abs(\x^2))/(1+0.3*\x)+0.04},\x);

\draw[blue,fill=blue] (0.6,0.75) circle (.35ex);
\node[blue] at (0.83,1.1)  {$q^{(2)}$};

\draw[blue,fill=blue] (-0.74,0.44) circle (.35ex) node[above] {$q^{(1)}$};

\node[red] at (0,-1.5) {$\mathcal{K}\cap A$};
\node at (-0.05,-0.05) {$B\cap A$};

\node at (2.5,1.5) {$\ell$};

\node at (-3.8,-3.8) {$B'\cap A$};

\end{tikzpicture}
\caption{An illustration of the convex geometry in the proof of Lemma \ref{thm:chord}. \label{fig:chord}}
\end{figure}
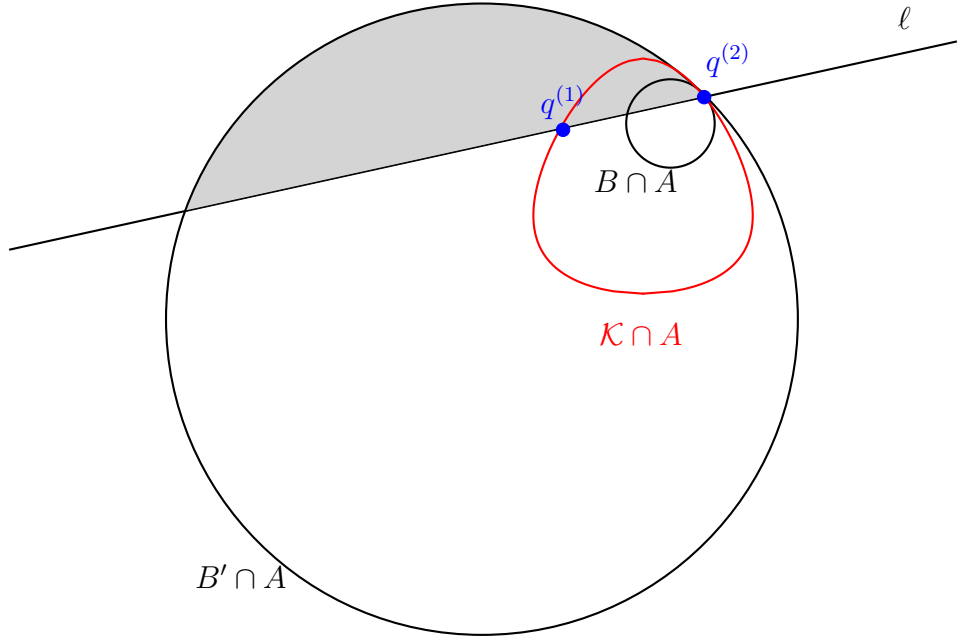

Since the ratio $\frac{\mathrm{dist}_{\partial B'}(\ell \cap \partial B')}{\mathrm{length}(\ell \cap B')}$ is maximized when $\ell \cap B'$ is the diameter of $B'$, it must be true that
\begin{equation}\label{eq:b1}
\mathrm{dist}_{\partial B'}(\ell \cap B') \leq \uppi \mathrm{length}(\ell \cap B').
\end{equation}

Assuming without loss of generality that $q^{(2)}$ is the origin, we note
\begin{equation}\label{eq:b2}
B' = \frac{\sqrt{\mathfrak{M}_2}}{\sqrt{\mathfrak{m}_2}}B,
\end{equation}
so that
\begin{equation}
\frac{\sqrt{\mathfrak{M}_2}}{\sqrt{\mathfrak{m}_2}}(\ell \cap B)=  \ell \cap (\frac{\sqrt{\mathfrak{M}_2}}{\sqrt{\mathfrak{m}_2}} B) = \ell \cap B'.
\end{equation}
Hence,
\begin{equation}\label{eq:b3}
\mathrm{length}(\ell \cap B') = \frac{\sqrt{\mathfrak{M}_2}}{\sqrt{\mathfrak{m}_2}}\mathrm{length}( \ell \cap B) \leq \frac{\sqrt{\mathfrak{M}_2}}{\sqrt{\mathfrak{m}_2}}\mathrm{length}( \ell \cap \partial K) = \frac{\sqrt{\mathfrak{M}_2}}{\sqrt{\mathfrak{m}_2}} \|q^{(2)} - q^{(1)}\|.
\end{equation}

Therefore, combining Equations \ref{eq:b0}, \ref{eq:b1}, and \ref{eq:b3}, we get:
\be \mathrm{dist}_{\partial \mathcal{K}}(q^{(1)},q^{(2)}) &= \mathrm{dist}_{\partial \mathcal{K}}(\ell \cap \partial \mathcal{K})
\leq 2\mathrm{dist}_{\partial B'}(\ell \cap \partial B')\\
&\leq 2 \uppi \mathrm{length}(\ell \cap B')
\leq 2\uppi \frac{\sqrt{\mathfrak{M}_2}}{\sqrt{\mathfrak{m}_2}} \|q^{(2)} - q^{(1)}\|.
\ee
\end{proof}

\begin{lemma}\label{thm:approx_convex}
Let $\mathcal{K}$ be a convex body and $\mathcal{M} = \partial \mathcal{K}$ be its boundary. Set $\alpha = 5\uppi \frac{\mathfrak{M}_2}{\mathfrak{m_2}}$ and $\beta = 10\frac{\sqrt{\mathfrak{M}_2}}{\sqrt{\mathfrak{m}_2}}$. Fix $(x,v) \in \mathcal{M}^\circ$. For $0 < \theta  < \frac{1}{70} \frac{\mathfrak{m}_2}{\mathfrak{M}_2}$ that satisfies $\beta \cdot \sqrt{\mathfrak{M}_2} \cdot \theta \cdot \Delta^\star_{\theta; (x,v)} <1$, let $\psi_{\theta;(x,v)}^\star := (\gamma_{\theta;(x,v)}^\star, \varphi_{\theta;(x,v)}^\star)$ and $\Delta^\star_{\theta; (x,v)}$ be generated by Algorithm \ref{alg:Approx_Convex}.  Then 
\begin{equation} \label{IneqApproxConvexLastMinute1}
\mathrm{dist}(\gamma_{(x,v)}(\Delta^\star_{\theta; (x,v)}), \gamma_{\theta;(x,v)}^\star) \leq  \alpha \cdot \theta \cdot \Delta^\star_{\theta; (x,v)}
\end{equation}
and
\begin{equation} \label{IneqApproxConvexLastMinute2}
\|\varphi_{\theta;(x,v)}^\star - \overline{\varphi_{(x,v)}(\Delta^\star_{\theta; (x,v)})}\| \leq \beta \cdot \sqrt{\mathfrak{M}_2} \cdot \theta \cdot \Delta^\star_{\theta; (x,v)}.
\end{equation}

Moreover, the function $\Delta^\star_{\theta; (x,v)}$ satisfies Assumption \ref{assumption:polynomial}, with
\begin{equation} \label{EqPolynomialAtEnd}
\frac{\theta}{2\sqrt{\mathfrak{M}_2}} \leq \Delta^\star_{\theta; (x,v)} \leq \frac{\theta}{\sqrt{\mathfrak{m}_2}}
\end{equation}
for all $(x,v) \in \mathcal{M}^\circ$.
\end{lemma}
\begin{rem}
The most difficult part of this proof is checking the bound \ref{IneqApproxConvexLastMinute2} on the error in the velocity approximation $\varphi_{\theta;(x,v)}^\star$, which requires carefully relating the intrinsic geometry of $\partial \mathcal{K}$ to its embedding in Euclidean space. We first show that the component of the error {\em orthogonal} to a certain 2-plane $P$ is $\mathcal{O}(\theta^2)$, and thus negligible.    The component of the error {\em contained} in $P$ is dealt with separately via a geometric argument, illustrated in Figure \ref{fig:approx_convex}. The chosen 2-plane $P$ is the plane spanned by the vector $v$ and the normal vector to $\mathcal{K}$ at $x$. These two vectors represent the initial velocity and acceleration of the geodesic path, and so roughly speaking the plane $P$ captures the zeroth- and first-order terms in our approximation of the velocity.
\end{rem}

\begin{proof}
We begin by proving Inequality \ref{EqPolynomialAtEnd}. Let $u$, $P$, Q, $\ell$, $\Delta^\star_{\theta; (x,v)}$, and $\psi_{\theta;(x,v)}^\star$ be as in Algorithm \ref{alg:Approx_Convex}. Recall that Algorithm \ref{alg:Approx_Convex} sets $\Delta^\star_{\theta; (x,v)} = \mathrm{length}(\ell \cap \mathcal{K})$.  Since the tangent sphere to $\partial \mathcal{K}$ at $x$ of radius $\frac{1}{\sqrt{\mathfrak{M}_2}}$ is contained in $\mathcal{K}$, and the tangent sphere to $\partial \mathcal{K}$ at $x$ of radius $\frac{1}{\sqrt{\mathfrak{m}_2}}$ contains $\mathcal{K}$ (Figure \ref{fig:chord}), and $0 < \theta < \frac{1}{70}$, we have
\begin{equation}\label{eq:sine_linearization}
\frac{\theta}{2\sqrt{\mathfrak{M}_2}} \leq \frac{1}{\sqrt{\mathfrak{M}_2}}\sin(\theta) \leq \Delta^\star_{\theta; (x,v)}  \leq \frac{1}{\sqrt{\mathfrak{m}_2}}\sin(\theta)\leq \frac{\theta}{\sqrt{\mathfrak{m}_2}}.
\end{equation}
Here we used the fact that
\be \label{eq:sin_half}
\frac{1}{2}z \leq \sin(z) \qquad \textrm{ for }0 \leq z \leq \frac{\pi}{2},
\ee
a fact we will use again later in this proof.
Hence, the function $\Delta^\star_{\theta; (x,v)}$ satisfies Assumption \ref{assumption:polynomial}. This completes the proof of Inequality \ref{EqPolynomialAtEnd}.\\

Next, we prove Inequality \ref{IneqApproxConvexLastMinute1}. Define $(x_1,v_1) := \psi_{(x,v)}(\Delta^\star_{\theta; (x,v)})$, and $(x_1^\dagger,v_1^\dagger) := \psi_{\theta;(x,v)}^\star$. By Assumption \ref{assumption:radius},  the principal curvatures of $\mathcal{M}$ are bounded above by $\sqrt{\mathfrak{M}_2}$, so  $\|\gamma''_{(x,v)}(t)\| \leq \sqrt{\mathfrak{M}_2}$ for all $x,v \in \mathcal{M}^\circ$ and $t \in \mathbb{R}$.  Therefore, since $\gamma_{(x,v)}(t) = x + vt + \int_0^t \int_0^\tau \gamma_{(x,v)}''(s) \mathrm{d}s \mathrm{d}\tau$, it must be true that
\be \label{eq:tangent_space_error}
&\qquad \quad \qquad \| x+vt - \gamma_{(x,v)}(t)\| = \left\|x+vt-\left(x+vt +\int_0^t \int_0^\tau \gamma_{(x,v)}''(s) \mathrm{d}s \mathrm{d}\tau \right)\right\| \\
&=\left\|\int_0^t \int_0^\tau \gamma_{(x,v)}''(s) \mathrm{d}s \mathrm{d}\tau \right\|
\leq \int_0^t \int_0^\tau  \|\gamma_{(x,v)}''(s) \| \mathrm{d}s \mathrm{d}\tau
\leq \int_0^t \int_0^\tau \sqrt{\mathfrak{M}_2} \mathrm{d}s \mathrm{d}\tau
=  \frac{1}{2}\sqrt{\mathfrak{M}_2} \cdot t^2
\ee
for every $t \geq 0$. 

Let $\Delta'$ be the unique number such that $\langle x + v \Delta' - x_1^\dagger, v\rangle = 0$, and let $\mathfrak{b}:= \|(x+v\Delta') - x_1^\dagger \|$ (Figure \ref{fig:Position_Error}).  Then the line segment connecting $(x+v\Delta' )$ and $x_1^\dagger$ is orthogonal to $v$.  Then we have
\be \label{eq:right_triangle}
\mathfrak{b} &= \sin(\theta) \times \Delta^\star_{\theta; (x,v)},\\
\Delta' &= \cos(\theta) \times \Delta^\star_{\theta; (x,v)}.
\ee
Therefore,
\be \label{eq:right_triangle_error}
\qquad \qquad  \|x_1 - x_1^\dagger\|
&\leq  \|x_1 - (x+v\Delta^\star_{\theta; (x,v)}) \| \\ &\qquad + \|(x+v\Delta^\star_{\theta; (x,v)}) - (x+v\Delta') \| + \|(x+v\Delta') - x_1^\dagger \|\\
&=  \|x_1 - (x+v\Delta^\star_{\theta; (x,v)}) \| + \|(\Delta^\star_{\theta; (x,v)} - \Delta')v \| + \mathfrak{b}\\
&\stackrel{{\scriptsize \textrm{Eq. }} \ref{eq:right_triangle}}{=}  \|\gamma_{(x,v)}(\Delta^\star_{\theta; (x,v)}) - (x+v\Delta^\star_{\theta; (x,v)}) \|\\ &\qquad + \left|\Delta^\star_{\theta; (x,v)} -  \cos(\theta) \times \Delta^\star_{\theta; (x,v)}\right| \times \|v \| + \sin(\theta) \times \Delta^\star_{\theta; (x,v)}\\
&\stackrel{{\scriptsize \textrm{Eq. }} \ref{eq:tangent_space_error}}{\leq} \frac{1}{2}\sqrt{\mathfrak{M}_2} \cdot (\Delta^\star_{\theta; (x,v)})^2 \\ & \qquad + \Delta^\star_{\theta; (x,v)}\times (1- \cos(\theta)) \times 1 + \sin(\theta) \times \Delta^\star_{\theta; (x,v)}\\
&= \frac{1}{2}\sqrt{\mathfrak{M}_2} \cdot (\Delta^\star_{\theta; (x,v)})^2 + \Delta^\star_{\theta; (x,v)}\times [1- \cos(\theta) + \sin(\theta)]\\
&\leq \frac{1}{2}\sqrt{\mathfrak{M}_2} \cdot (\Delta^\star_{\theta; (x,v)})^2 + \Delta^\star_{\theta; (x,v)}\times 2\theta
\ee
since $1- \cos(\theta) + \sin(\theta) \leq 2 \theta$ for $\theta \geq 0$.

\begin{figure}[t]
\begin{center}
\includegraphics[trim={50mm 155mm 70mm 0mm}, clip, scale=0.5]{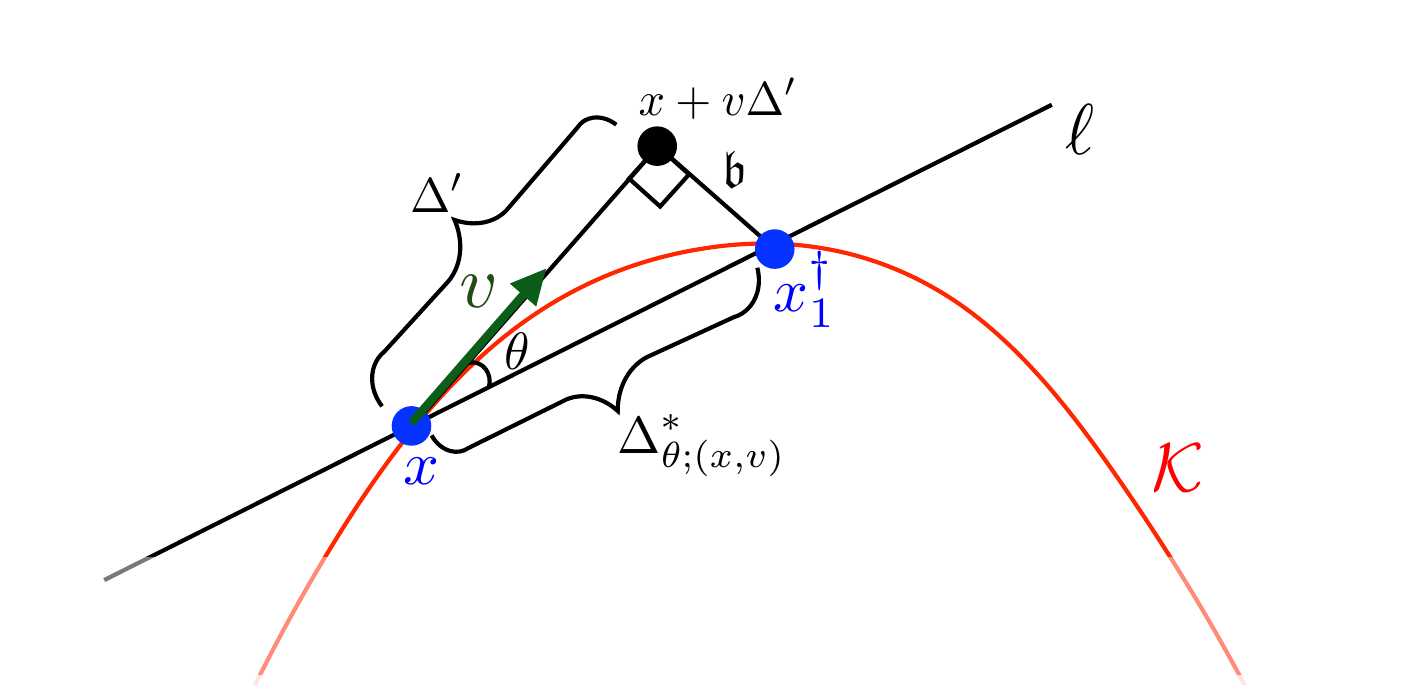} 
\end{center}
\caption{Bounding the error in the position in the proof of Lemma \ref{thm:approx_convex}. \label{fig:Position_Error}}
\end{figure}

Hence, 
\be
\mathrm{dist}(&\gamma_{(x,v)}(\Delta^\star_{\theta; (x,v)}), \gamma_{\theta;(x,v)}^\star) = \mathrm{dist}(x_1, x_1^\dagger)
\stackrel{{\scriptsize \textrm{Lemma }} \ref{thm:chord}}{\leq}  2\uppi \frac{\sqrt{\mathfrak{M}_2}}{\sqrt{\mathfrak{m_2}}} \|x_1 - x_1^\dagger\| \\
&\stackrel{{\scriptsize \textrm{Eq.}} \ref{eq:right_triangle_error}}{\leq}  2\uppi \frac{\sqrt{\mathfrak{M}_2}}{\sqrt{\mathfrak{m_2}}} \left[\frac{1}{2}\sqrt{\mathfrak{M}_2} \times (\Delta^\star_{\theta; (x,v)})^2 + \Delta^\star_{\theta; (x,v)}\times 2\theta\right] \\
&\stackrel{{\scriptsize \textrm{Eq.}} \ref{eq:sine_linearization}}{\leq}  2\uppi \frac{\sqrt{\mathfrak{M}_2}}{\sqrt{\mathfrak{m_2}}} \left[\frac{1}{2}\sqrt{\mathfrak{M}_2} \times \Delta^\star_{\theta; (x,v)} \times \frac{\theta}{\sqrt{\mathfrak{m_2}}} + \Delta^\star_{\theta; (x,v)}\times 2\theta\right] \\
&\leq  2\uppi \frac{\sqrt{\mathfrak{M}_2}}{\sqrt{\mathfrak{m_2}}} \left[\frac{1}{2}\frac{\sqrt{\mathfrak{M}_2}}{\sqrt{\mathfrak{m}_2}} \times \Delta^\star_{\theta; (x,v)} \times \theta + 2\frac{\sqrt{\mathfrak{M}_2}}{\sqrt{\mathfrak{m}_2}} \times \Delta^\star_{\theta; (x,v)}\times \theta\right] \\
&=  5\uppi \left(\frac{\sqrt{\mathfrak{M}_2}}{\sqrt{\mathfrak{m_2}}}\right)^2 \times \Delta^\star_{\theta; (x,v)} \times \theta
=  \alpha \times \Delta^\star_{\theta; (x,v)} \times \theta,
\ee
where the last inequality holds since $\frac{\sqrt{\mathfrak{M}_2}}{\sqrt{\mathfrak{m}_2}} \geq 1$.  This completes the proof of Inequality \ref{IneqApproxConvexLastMinute1}.

Finally, we prove Inequality \ref{IneqApproxConvexLastMinute2}. For $(x,v) \in \mathcal{M}^\circ$, define $H_x(v) := \|\gamma_{(x,v)}''(0)\|$.  By Assumption \ref{assumption:radius},
\be \label{eq:geodesic_second_derivative}
\sqrt{\mathfrak{m}_2}\leq H_x(v) \leq \sqrt{\mathfrak{M}_2}.
\ee
and
\be \label{eq:normal_line_derivative}
\|\mathrm{D}_{\mathsf{u}} \mathbbm{n} (y)\| \leq \sqrt{\mathfrak{M}_2}
\ee
 at every $(x,v), (y,{\mathsf{u}}) \in \mathcal{M}^{\circ}$. 
 
Recall that $P$ is the plane spanned by the vectors $\mathbbm{n}(x)$ and $u$ (this 2-plane is also the 2-plane of curvature associated with the curve $\gamma_{(x,v)}(t)$ at $t=0$), and let $P^\perp$ be its orthogonal complement.  Denote by $\mathcal{P}$ and $\mathcal{P}^\perp$ the operators on $\mathbb{R}^{d+1}$ that project a vector onto $P$ and $P^\perp$, respectively.

Define $(x(t),v(t)):=\psi_{(x,v)}(t)$.  Since $\mathcal{P}^\perp v(0)=0$ and $\mathcal{P}^\perp \mathbbm{n}(\gamma_{(x,v)}(0)) = \mathcal{P}^\perp \mathbbm{n}(x) = 0$, we have:
\be \label{eq:b4}
\qquad \qquad \|\mathcal{P}^\perp v(t)\| 
&=\left \|\int_0^t \mathcal{P}^\perp H_{x(\tau)}(v(\tau)) \mathbbm{n}(\gamma_{(x,v)}(\tau)) \mathrm{d}\tau \right\|\\
&\stackrel{{\scriptsize \textrm{Eq. }} \ref{eq:geodesic_second_derivative}}{\leq} \int_0^t  \left \| \sqrt{\mathfrak{M}_2} \mathcal{P}^\perp \mathbbm{n}(\gamma_{(x,v)}(\tau)) \right\| \mathrm{d}\tau\\
&= \sqrt{\mathfrak{M}_2} \int_0^t  \left \| \int_0^\tau \mathrm{D}_{v(s)} \mathcal{P}^\perp \mathbbm{n}(\gamma_{(x,v)}(s)) \mathrm{d}s +  \mathcal{P}^\perp \mathbbm{n}(\gamma_{(x,v)}(0)) \right\| \mathrm{d}\tau\\
&\leq \sqrt{\mathfrak{M}_2} \int_0^t \int_0^\tau  \left \| \mathrm{D}_{v(s)} \mathcal{P}^\perp \mathbbm{n}(\gamma_{(x,v)}(s)) \right\| \mathrm{d}s \mathrm{d}\tau\\
&\stackrel{{\scriptsize \textrm{Eq. }} \ref{eq:normal_line_derivative}}{\leq}  \sqrt{\mathfrak{M}_2} \int_0^t \int_0^\tau  \sqrt{\mathfrak{M}_2} \mathrm{d}s \mathrm{d}\tau
=\frac{(\sqrt{\mathfrak{M}_2})^2 }{2} t^2.
\ee

Therefore,
Equation \ref{eq:b4} gives
\begin{equation}\label{eq:b5}
\|\mathcal{P}^{\perp} v_1\| = \|\mathcal{P}^\perp v(\Delta^\star_{\theta; (x,v)})\| \leq \frac{(\sqrt{\mathfrak{M}_2})^2 }{2} (\Delta^\star_{\theta; (x,v)})^2.
\end{equation}

Now define $\overline{v}_1(t) := \phi(t;v_1;x_1,x_1^\dagger)$ to be the parallel transport of $v_1$ from $x_1$ to $x_1^\dagger$ along the distance-minimizing unit-speed geodesic $\omega(t)  \equiv \omega(t;x_1,x_1^\dagger)$. Let $t^\sharp := \mathrm{dist}(x_1,x_1^\dagger)$ and let $\overline{v_1} := \overline{v}_1(t^\sharp)$.  Then 
\[\frac{\mathrm{d}}{\mathrm{d}t} \overline{v_1(t)} = \left \langle \overline{v_1(t)},  \frac{\mathrm{d}}{\mathrm{d}t} \mathbbm{n}(\omega(t))\right\rangle \mathbbm{n}(\omega(t)).\]
Therefore, by a similar calculation to that in Equation \ref{eq:b4},
\be \label{eq:b4b}
&\qquad \qquad \qquad \qquad \quad   \|\mathcal{P}^\perp v_1-\mathcal{P}^\perp \overline{v}_1\| =\bigg \|\int_0^{t^\sharp} \mathcal{P}^\perp \frac{\mathrm{d}}{\mathrm{d}\tau} \overline{v_1(\tau)}  \mathrm{d}\tau \bigg \|\\
&= \int_0^{t^\sharp} \left\| \mathcal{P}^\perp  \left \langle \overline{v_1(\tau)},  \frac{\mathrm{d}}{\mathrm{d}\tau} \mathbbm{n}(\omega(\tau))\right\rangle \mathbbm{n}(\omega(\tau))  \right\| \mathrm{d}\tau\\
&\leq \int_0^{t^\sharp} \left\|\frac{\mathrm{d}}{\mathrm{d}\tau} \mathbbm{n}(\omega(\tau))\right\| \times \left\| \mathcal{P}^\perp  \mathbbm{n}(\omega(\tau))  \right\| \mathrm{d}\tau
\stackrel{{\scriptsize \textrm{Eq. }} \ref{eq:normal_line_derivative}}{\leq} \int_0^{t^\sharp}  \sqrt{\mathfrak{M}_2} \left \|\mathcal{P}^\perp \mathbbm{n}(\omega(\tau)) \right\| \mathrm{d}\tau\\
&= \sqrt{\mathfrak{M}_2} \int_0^{t^\sharp}  \left \| \int_0^\tau \frac{\mathrm{d}}{\mathrm{d}s} \mathcal{P}^\perp \mathbbm{n}(\omega(s)) \mathrm{d}s +  \mathcal{P}^\perp \mathbbm{n}(\omega(0)) \right\| \mathrm{d}\tau\\
&\leq \sqrt{\mathfrak{M}_2} \int_0^{t^\sharp}  \left \| \int_0^\tau \frac{\mathrm{d}}{\mathrm{d}s} \mathcal{P}^\perp \mathbbm{n}(\omega(s)) \mathrm{d}s\right\| +  \left\| \mathcal{P}^\perp \mathbbm{n}(x_1) \right\| \mathrm{d}\tau\\
&= \sqrt{\mathfrak{M}_2} \int_0^{t^\sharp}  \left \| \int_0^\tau \frac{\mathrm{d}}{\mathrm{d}s} \mathcal{P}^\perp \mathbbm{n}(\omega(s)) \mathrm{d}s\right\|
+  \bigg\| \int_0^{\Delta^\star_{\theta; (x,v)}} \mathrm{D}_{v(s)} \mathcal{P}^\perp \mathbbm{n}(\gamma_{(x,v)}(s)) \mathrm{d}s +  \mathcal{P}^\perp \mathbbm{n}(\gamma_{(x,v)}(0)) \bigg\| \mathrm{d}\tau\\
&\stackrel{{\scriptsize \textrm{Eq. }} \ref{eq:normal_line_derivative}}{\leq} \sqrt{\mathfrak{M}_2} \int_0^{t^\sharp} \int_0^\tau \sqrt{\mathfrak{M}_2} \mathrm{d}s +  \int_0^{\Delta^\star_{\theta; (x,v)}}\sqrt{\mathfrak{M}_2} \mathrm{d}s \mathrm{d}\tau
= (\sqrt{\mathfrak{M}_2})^2 \left[\frac{1}{2}(t^\sharp)^2 +  t^\sharp \times \Delta^\star_{\theta; (x,v)}\right].
\ee

Since $0 < \theta  < \frac{1}{70} \frac{\mathfrak{m}_2}{\mathfrak{M}_2}$ by assumption, Inequality \ref{IneqApproxConvexLastMinute1} implies that
\be \label{eq:t_sharp}
t^\sharp \leq \Delta^\star_{\theta; (x,v)}.
\ee
Therefore, by Equation \ref{eq:b4b},
\begin{equation}\label{eq:parallel_transport}
\|\mathcal{P}^\perp v_1-\mathcal{P}^\perp \overline{v_1}\| \leq \frac{3}{2}(\sqrt{\mathfrak{M}_2})^2(\Delta^\star_{\theta; (x,v)})^2 \leq 2 (\sqrt{\mathfrak{M}_2})^2(\Delta^\star_{\theta; (x,v)})^2.
\end{equation}

Hence, Equations \ref{eq:b5} and \ref{eq:parallel_transport} imply that
\be \label{eq:b4c}
&\qquad \qquad \|\mathcal{P}^{\perp} \overline{v_1}\|
\leq \|\mathcal{P}^{\perp} v_1\| + \|\mathcal{P}^\perp v_1-\mathcal{P}^\perp \overline{v_1}\|\\
&\leq \frac{(\sqrt{\mathfrak{M}_2})^2 }{2} \cdot (\Delta^\star_{\theta; (x,v)})^2 + 2(\sqrt{\mathfrak{M}_2})^2 \cdot (\Delta^\star_{\theta; (x,v)})^2
= \frac{5}{2}(\sqrt{\mathfrak{M}_2})^2 \cdot (\Delta^\star_{\theta; (x,v)})^2.
\ee

Let $\mathsf{b}:= \measuredangle(\overline{v_1},P)$.  Since $\mathsf{b}$ is the angle between a vector and a plane, $0\leq \mathsf{b} \leq \frac{\pi}{2}$, so $\sin(\mathsf{b})\geq \frac{1}{2}\mathsf{b}$ by Equation \ref{eq:sin_half}.  Thus,
\be\label{eq:b_sin}
\qquad \frac{1}{2}\mathsf{b} &\leq \sin(\mathsf{b}) =\frac{\|\mathcal{P}^\perp \overline{v_1}\|}{\|\overline{v_1}\|} =\frac{\|\mathcal{P}^\perp \overline{v_1}\|}{1} \stackrel{{\scriptsize \textrm{Eq. }} \ref{eq:b4c}}{\leq}  \frac{5}{2}(\sqrt{\mathfrak{M}_2})^2 \cdot (\Delta^\star_{\theta; (x,v)})^2,
\ee
and so
\be \label{eq:b_angle}
\mathsf{b}\leq 5(\sqrt{\mathfrak{M}_2})^2 \cdot (\Delta^\star_{\theta; (x,v)})^2.
\ee

Since $\theta < \frac{1}{70} \frac{\mathfrak{m}_2}{\mathfrak{M}_2}$ by assumption, we have
\be \label{eq:Delta_constant}
\sqrt{\mathfrak{M}_2} \Delta^\star_{\theta; (x,v)} \stackrel{{\scriptsize \textrm{Eq. }}\ref{EqPolynomialAtEnd}}{\leq} \frac{\sqrt{\mathfrak{M}_2}}{\sqrt{\mathfrak{m}_2}}\theta < \frac{1}{70} \frac{\sqrt{\mathfrak{m}_2}}{\sqrt{\mathfrak{M}_2}} \leq \frac{1}{70}.
\ee

 Since the radius of curvature of $\partial \mathcal{K}$ is uniformly bounded below by $\frac{1}{\sqrt{\mathfrak{M}_2}}$, this implies
\be \label{eq:one}
&\measuredangle (\mathbbm{n}^\perp(x), \mathbbm{n}^\perp(x_1^\dagger)) = \measuredangle (\mathbbm{n}(x), \mathbbm{n}(x_1^\dagger)) \leq \sqrt{\mathfrak{M}_2}\mathrm{dist}(x,x_1^\dagger)\\
& \stackrel{{\scriptsize \textrm{Lemma }} \ref{thm:chord}}{\leq}  \sqrt{\mathfrak{M}_2} \times 2\uppi \frac{\sqrt{\mathfrak{M}_2}}{\sqrt{\mathfrak{m}_2}} \|x_1^\dagger - x\|
= 2\uppi \sqrt{\mathfrak{M}_2}  \frac{\sqrt{\mathfrak{M}_2}}{\sqrt{\mathfrak{m}_2}} \Delta^\star_{\theta; (x,v)}.
\ee
Also, since $\mathbbm{n}(x)$ is contained in the plane $P$, we have that
\be \label{eq:two}
\measuredangle(P,\mathbbm{n}^\perp(x)) 
 = \frac{\pi}{2}.
\ee
Now,
\be \label{eq:projection_angle}
& \measuredangle \left(\mathrm{proj}_P\mathbbm{n}(x), \mathrm{proj}_P\mathbbm{n}(x_1^\dagger)\right) = \measuredangle \left(\mathbbm{n}(x), \mathrm{proj}_P\mathbbm{n}(x_1^\dagger)\right)\\
&\leq  \measuredangle \left(\mathbbm{n}(x), \mathbbm{n}(x_1^\dagger)\right) + \measuredangle \left(\mathbbm{n}(x_1^\dagger), \mathrm{proj}_P\mathbbm{n}(x_1^\dagger)\right)\\
&\leq \measuredangle \left(\mathbbm{n}(x), \mathbbm{n}(x_1^\dagger)\right) + \measuredangle \left(\mathbbm{n}(x_1^\dagger), \mathbbm{n}(x)\right)
=2 \measuredangle \left(\mathbbm{n}(x), \mathbbm{n}(x_1^\dagger)\right),
\ee
where the first equality is true since $\mathbbm{n}(x) \in P$ implies that $\mathrm{proj}_P\mathbbm{n}(x) = \mathbbm{n}(x)$, the first inequality is the triangle inequality for angles between vectors, and the second inequality is true by Inequality \ref{eq:projection_minimizes_angle} and the fact that $\mathbbm{n}(x) \in P$. 

Define $\mathsf{c} := \measuredangle(P,\mathbbm{n}^\perp(x_1^\dagger))$.  Equation \ref{eq:projection_angle} implies that
\be \label{eq:projection_angle2}
 & \quad \measuredangle(\mathrm{proj}_P\mathbbm{n}(x_1^\dagger),\mathbbm{n}(x_1^\dagger)) \leq \measuredangle \left(\mathrm{proj}_P\mathbbm{n}(x_1^\dagger), \mathrm{proj}_P\mathbbm{n}(x)\right)\\ & \qquad \qquad+ \measuredangle(\mathrm{proj}_P\mathbbm{n}(x) ,\mathbbm{n}(x))+ \measuredangle(\mathbbm{n}(x),\mathbbm{n}(x_1^\dagger))\\
&\stackrel{{\scriptsize \textrm{Eq. \ref{eq:projection_angle}}}}{\leq} 2\measuredangle(\mathbbm{n}(x),\mathbbm{n}(x_1^\dagger))+ \measuredangle(\mathrm{proj}_P\mathbbm{n}(x) ,\mathbbm{n}(x))+ \measuredangle(\mathbbm{n}(x),\mathbbm{n}(x_1^\dagger))\\
&= 3\measuredangle(\mathbbm{n}(x),\mathbbm{n}(x_1^\dagger))+ \measuredangle(\mathrm{proj}_P\mathbbm{n}(x) ,\mathbbm{n}(x))
= 3\measuredangle(\mathbbm{n}(x),\mathbbm{n}(x_1^\dagger))+ 0,
\ee
so
\be \label{eq:three}
& \qquad \qquad \qquad  \mathsf{c} = \measuredangle(P,\mathbbm{n}^\perp(x_1^\dagger))
=\frac{\pi}{2} - \measuredangle(\mathrm{proj}_P\mathbbm{n}(x_1^\dagger),\mathbbm{n}(x_1^\dagger))\\
&\stackrel{{\scriptsize \textrm{Eq. \ref{eq:projection_angle2}}}}{\geq} \frac{\pi}{2} -  3\measuredangle(\mathbbm{n}(x),\mathbbm{n}(x_1^\dagger))
\stackrel{{\scriptsize \textrm{Eq. \ref{eq:one}}}}{\geq} \frac{\pi}{2} - 3\times2\uppi \sqrt{\mathfrak{M}_2}  \frac{\sqrt{\mathfrak{M}_2}}{\sqrt{\mathfrak{m}_2}} \Delta^\star_{\theta; (x,v)}
\stackrel{{\scriptsize \textrm{Eq. \ref{eq:Delta_constant}}}}{>} \frac{\pi}{4}.
\ee

Therefore, $\mathbbm{n}^\perp(x_1^\dagger)$ does not contain the 2-plane $P$, so $\mathbbm{n}^\perp(x_1^\dagger) \cap P $ must be a 1-dimensional subspace. Define $\mathcal{Q}$ and $\mathcal{Q}^\perp$  to be the projection operators onto the one-dimensional subspace $Q := \mathbbm{n}^\perp(x_1^\dagger) \cap P $ and its orthogonal complement, respectively.  We will now show that $\measuredangle(\mathcal{Q}\overline{v_1},  v_1^\dagger)=0$. 
\be \label{eq:b4'}
& \qquad \| v(t) -  v(0)\| =\left \|\int_0^t H_{x(\tau)}(v(\tau)) \mathbbm{n}(\gamma_{(x,v)}(\tau)) \mathrm{d}\tau \right\|\\
&\stackrel{{\scriptsize \textrm{Eq. }} \ref{eq:geodesic_second_derivative}}{\leq} \int_0^t  \left \| \sqrt{\mathfrak{M}_2} \mathbbm{n}(\gamma_{(x,v)}(\tau)) \right\| \mathrm{d}\tau
= \int_0^t  \left \| \sqrt{\mathfrak{M}_2} \right\| \times 1 \mathrm{d}\tau
= \sqrt{\mathfrak{M}_2} t.
\ee

Also,
\be \label{eq:b4b'}
&\qquad \qquad \| v_1-\overline{v}_1\| =\bigg \|\int_0^{t^\sharp} \frac{\mathrm{d}}{\mathrm{d}\tau} \overline{v_1(\tau)}  \mathrm{d}\tau \bigg \|
= \int_0^{t^\sharp} \left\|  \left \langle \overline{v_1(\tau)},  \frac{\mathrm{d}}{\mathrm{d}\tau} \mathbbm{n}(\omega(\tau))\right\rangle \mathbbm{n}(\omega(\tau))  \right\| \mathrm{d}\tau\\
&\stackrel{{\scriptsize \textrm{Eq. }} \ref{eq:normal_line_derivative}}{\leq} \int_0^{t^\sharp}  \sqrt{\mathfrak{M}_2} \left \| \mathbbm{n}(\omega(\tau)) \right\| \mathrm{d}\tau
= \int_0^{t^\sharp}  \sqrt{\mathfrak{M}_2} \times 1 \mathrm{d}\tau
= \sqrt{\mathfrak{M}_2} t^\sharp
\stackrel{{\scriptsize \textrm{Eq. }}\ref{eq:t_sharp}}{\leq} \sqrt{\mathfrak{M}_2} \Delta^\star_{\theta; (x,v)}.
\ee
Hence,
\be \label{eq:Palgorithm3}
\qquad \|v-\overline{v_1}\| &\leq \|v-v_1\| + \|v_1-\overline{v_1}\|
= \|v(0) - v(\Delta^\star_{\theta; (x,v)})\| + \|v_1-\overline{v_1}\|\\
&\stackrel{{\scriptsize \textrm{Eq. }} \ref{eq:b4'}, \ref{eq:b4b'}}{\leq} \sqrt{\mathfrak{M}_2} \Delta^\star_{\theta; (x,v)} + \sqrt{\mathfrak{M}_2} \Delta^\star_{\theta; (x,v)}
\stackrel{{\scriptsize \textrm{Eq. \ref{eq:Delta_constant}}}}{\leq} \frac{3}{70}.
\ee
But $\|\overline{v_1}\| = \|v\|=1$, so $\overline{v_1}$, $v$ and $v-\overline{v_1}$ form an iscoceles triangle whose height bisects the angle  $\measuredangle(v,\overline{v_1})$.  Hence,
\be \label{eq:Palgorithm4}
\sin\left(\frac{1}{2}\measuredangle(v,\overline{v_1})\right) &= \frac{1}{2}\|v-\overline{v_1}\|
 \stackrel{{\scriptsize \textrm{Eq. \ref{eq:Palgorithm3}}}}{\leq} \frac{1}{2} \times \frac{3}{70}.
\ee
 Since $\|v-\overline{v_1}\| \leq \frac{3}{70} < 1= \|\overline{v_1}\| = \|v\|=1$, $\measuredangle(v,\overline{v_1})$ must be the smallest angle of this iscoceles triangle, so $\measuredangle(v,\overline{v_1}) \leq \frac{\pi}{3}$. Therefore, since $0\leq \measuredangle(v,\overline{v_1}) \leq \frac{\pi}{3}$,
\be 
\frac{1}{2} \measuredangle(v,\overline{v_1}) \stackrel{{\scriptsize \textrm{Eq. \ref{eq:sin_half}}}}{\leq} 2\sin \left(\frac{1}{2}\measuredangle(v,\overline{v_1})\right) \stackrel{{\scriptsize \textrm{Eq. \ref{eq:Palgorithm4}}}}{\leq} \frac{3}{70}.
\ee
Hence,
\be \label{eq:Palgorithm5}
\measuredangle(v,\overline{v_1}) \leq \frac{6}{70}.
\ee

Now, any unit normal vector of the line $P \cap \mathbbm{n}^\perp(\cdot)$ that is also contained in the plane $P$ is equal (up to a scalar factor) to $\mathrm{proj}_P\mathbbm{n}(\cdot)$.  Therefore,
\be \label{eq:Palgorithm1}
\qquad \qquad \qquad \qquad \measuredangle &\left(P \cap \mathbbm{n}^\perp(x),P \cap \mathbbm{n}^\perp(x_1^\dagger)\right) = \measuredangle \left(\mathrm{proj}_P\mathbbm{n}(x), \mathrm{proj}_P\mathbbm{n}(x_1^\dagger)\right)\\
&\stackrel{{\scriptsize \textrm{Eq. \ref{eq:projection_angle}}}}{\leq} 2 \measuredangle \left(\mathbbm{n}(x), \mathbbm{n}(x_1^\dagger)\right)
= 2\measuredangle \left(\mathbbm{n}^\perp(x), \mathbbm{n}^\perp(x_1^\dagger)\right).
\ee
Therefore, 
\be \label{eq:Palgorithm2}
&\qquad \qquad \measuredangle(u,P \cap \mathbbm{n}^\perp(x_1^\dagger)) \leq \measuredangle(u, P \cap \mathbbm{n}^\perp(x)) + \measuredangle \left(P \cap \mathbbm{n}^\perp(x),P \cap \mathbbm{n}^\perp(x_1^\dagger)\right)\\
&\stackrel{{\scriptsize \textrm{Eq. \ref{eq:Palgorithm1}}}}{\leq} \measuredangle(u, P \cap \mathbbm{n}^\perp(x)) + 2\measuredangle \left(\mathbbm{n}^\perp(x), \mathbbm{n}^\perp(x_1^\dagger)\right)
= \theta + 2\measuredangle \left(\mathbbm{n}^\perp(x), \mathbbm{n}^\perp(x_1^\dagger)\right)\\
&\stackrel{{\scriptsize \textrm{Eq. \ref{eq:one}}}}{\leq}  \theta + 2 \times 2\uppi \sqrt{\mathfrak{M}_2}  \frac{\sqrt{\mathfrak{M}_2}}{\sqrt{\mathfrak{m}_2}} \Delta^\star_{\theta; (x,v)}
\stackrel{{\scriptsize \textrm{Eq. \ref{eq:Delta_constant}}}}{\leq} \theta + 2\times \frac{2 \pi}{70}.
\ee

But $v_1^\dagger$ is the (normalized) projection of $u$ onto $P \cap \mathbbm{n}^\perp(x_1^\dagger)$, so $\measuredangle(u, v_1^\dagger) = \measuredangle(u,P \cap \mathbbm{n}^\perp(x_1^\dagger))$.  Hence,
\be \label{eq:Palgorithm6}
&\measuredangle(v, v_1^\dagger) \leq \measuredangle(v, u) + \measuredangle(u, v_1^\dagger)
 = \theta + \measuredangle(u,P \cap \mathbbm{n}^\perp(x_1^\dagger))\\
& \stackrel{{\scriptsize \textrm{Eq. \ref{eq:Palgorithm2}}}}{\leq}  \theta  + \theta + 2 \times \frac{2 \pi}{70}
\leq \frac{3}{10},
\ee
since $0 < \theta  < \frac{1}{70} \frac{\mathfrak{m}_2}{\mathfrak{M}_2} \leq \frac{1}{70} \times 1$ by assumption.
Therefore,
\be \label{eq:pi_tenth}
\measuredangle(\overline{v}_1, v_1^\dagger) &\leq \measuredangle(\overline{v}_1,v) + \measuredangle(v, v_1^\dagger)
  \stackrel{{\scriptsize \textrm{Eq. \ref{eq:Palgorithm5}, \ref{eq:Palgorithm6}}}}{\leq} \frac{6}{70}  + \frac{3}{10}
< \frac{\pi}{5}.
\ee


Now, observe that by Equation \ref{eq:t_sharp},
\be
\mathrm{dist}(x,x_1^\dagger) \leq \mathrm{dist}(x,x_1) + \mathrm{dist}(x_1,x_1^\dagger)\leq \Delta^\star_{\theta; (x,v)} + t^\sharp
\stackrel{{\scriptsize \textrm{Eq. }}\ref{eq:t_sharp}}{\leq} 2\Delta^\star_{\theta; (x,v)}.
\ee

Define $\mathsf{a}:= \measuredangle(\overline{v_1},\mathbbm{n}^\perp(x_1^\dagger)\cap P)$.  Since $\sin(\mathsf{a}) \times \sin(\mathsf{c}) = \sin(\mathsf{b})$ (figure \ref{fig:approx_convex}), we have
\be \label{eq:sin_a}
&\qquad \sin(\mathsf{a}) = \frac{\sin(\mathsf{b})}{\sin(\mathsf{c})}
\stackrel{{\scriptsize \textrm{Eq. }}\ref{eq:three}}{\leq}  \frac{\sin(\mathsf{b})}{ \sin(\frac{\pi}{4})}
\stackrel{{\scriptsize \textrm{Eq. }}\ref{eq:b_sin}}{\leq} \sqrt{2}\times 5(\sqrt{\mathfrak{M}_2})^2 \cdot (\Delta^\star_{\theta; (x,v)})^2.
\ee
\begin{figure}[t]
\begin{center}
\includegraphics[trim={33mm 58mm 105mm 70mm}, clip, scale=0.5]{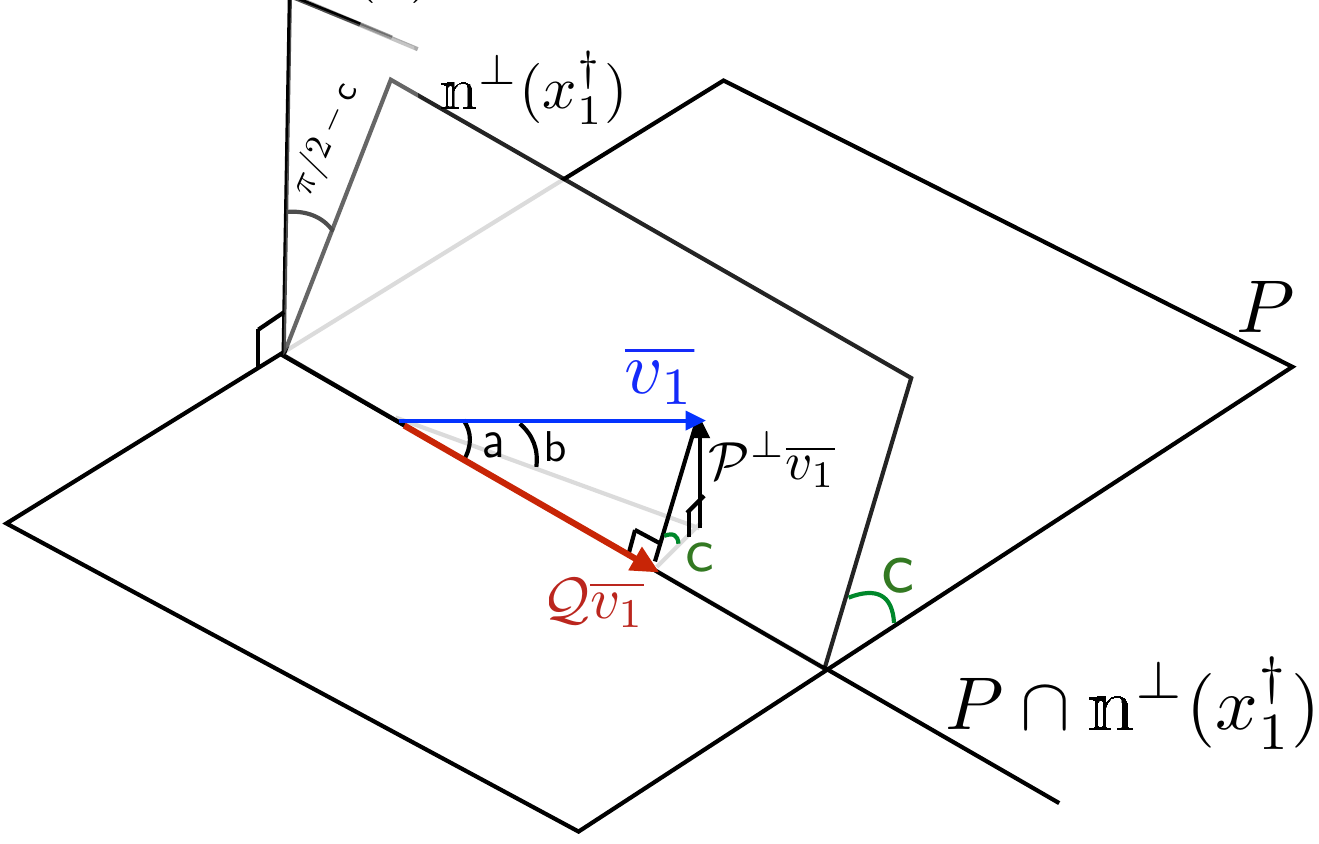} 
\end{center}
\caption{An illustration of the planes and vectors used to bound the error in the velocity in the proof of Lemma \ref{thm:approx_convex}. \label{fig:approx_convex}}
\end{figure}

Since $\mathsf{a}$ is the angle between a vector and a subspace, $0\leq \mathsf{a} \leq \frac{\pi}{2}$, and so
\be
\frac{\|\mathcal{Q}\overline{v_1}\|}{\|\overline{v_1}\|} &= \cos(\mathsf{a})
\geq 1-\sin^2(\mathsf{a})
\stackrel{{\scriptsize \textrm{Eq. }}\ref{eq:sin_a}}{\geq} 1 -  (5\sqrt{2}(\sqrt{\mathfrak{M}_2})^2 \cdot (\Delta^\star_{\theta; (x,v)})^2)^2.
\ee
Since $\|\overline{v_1}\|=1$, we get that
\be \label{eq:Q_projection}
\|\mathcal{Q}\overline{v_1}\| \geq  1 -  (5\sqrt{2}(\sqrt{\mathfrak{M}_2})^2 \cdot (\Delta^\star_{\theta; (x,v)})^2)^2.
\ee

But $v_1^\dagger \in P \cap \mathbbm{n}^\perp(x_1^\dagger)$ and $\mathcal{Q} \overline{v}_1 \in P \cap \mathbbm{n}^\perp(x_1^\dagger)$, so both $v_1^\dagger$ and $\mathcal{Q} \overline{v}_1$ lie in the same 1-dimensional subspace, $P \cap \mathbbm{n}^\perp(x_1^\dagger)$.  Since we have by Equation \ref{eq:pi_tenth} that $\measuredangle(\overline{v}_1, v_1^\dagger) < \frac{\pi}{5}$
 and both $v_1^\dagger$ and $\mathcal{Q} \overline{v}_1$ lie in the same 1-dimensional subspace, it must be true that
\be \label{eq:angle_vector}
\measuredangle(\mathcal{Q}\overline{v_1},  v_1^\dagger)=0.
\ee

Therefore,
\be \label{eq:email1}
&\qquad \qquad \|v_1^\dagger - \mathcal{Q}\overline{v_1}\| \stackrel{{\scriptsize \textrm{Eq. }}\ref{eq:angle_vector}}{=} \|v_1^\dagger\| - \|\mathcal{Q}\overline{v_1}\|\\
&\stackrel{{\scriptsize \textrm{Eq. }}\ref{eq:Q_projection}}{\leq} 1- ( 1 -  (5\sqrt{2}(\sqrt{\mathfrak{M}_2})^2 \cdot (\Delta^\star_{\theta; (x,v)})^2)^2)
=  (5\sqrt{2}(\sqrt{\mathfrak{M}_2})^2 \cdot (\Delta^\star_{\theta; (x,v)})^2)^2 .
\ee

Also,
by Equation \ref{eq:sin_a} we have,
\be \label{eq:email2}
\|\mathcal{Q}&\overline{v_1}- \overline{v_1}\| = \|\mathcal{Q}^\perp \overline{v_1}\|
= \|\overline{v_1}\| \times \sin(\mathsf{a})\\
&= 1\times \sin(\mathsf{a})
 \leq 5\sqrt{2}(\sqrt{\mathfrak{M}_2})^2 \cdot (\Delta^\star_{\theta; (x,v)})^2 .
\ee

Therefore, by Equations \ref{eq:email1} and \ref{eq:email2},
\be \label{eq:email3}
&\qquad \qquad \qquad \|v_1^\dagger - \overline{v_1}\| \leq \|v_1^\dagger - \mathcal{Q}\overline{v_1}\| + \|\mathcal{Q}\overline{v_1}- \overline{v_1}\|\\
&\leq (5\sqrt{2}(\sqrt{\mathfrak{M}_2})^2 \cdot (\Delta^\star_{\theta; (x,v)})^2)^2 + 5\sqrt{2}(\sqrt{\mathfrak{M}_2})^2 \cdot (\Delta^\star_{\theta; (x,v)})^2
\leq  10(\sqrt{\mathfrak{M}_2})^2 \cdot (\Delta^\star_{\theta; (x,v)})^2,
\ee
where the last inequality holds since $(\sqrt{\mathfrak{M}_2})^2 \cdot (\Delta^\star_{\theta; (x,v))})^2< \frac{1}{70^2}$ by Equation \ref{eq:Delta_constant}.  Hence,
\be
\|\varphi_{\theta;(x,v)}^\star &- \overline{\varphi_{(x,v)}(\Delta^\star_{\theta; (x,v)})}\| = \|v_1^\dagger - \overline{v_1}\|
\stackrel{{\scriptsize \textrm{Eq. }} \ref{eq:email3}}{\leq}  10(\sqrt{\mathfrak{M}_2})^2 \cdot(\Delta^\star_{\theta; (x,v)})^2\\
&\stackrel{{\scriptsize \textrm{Eq. }} \ref{eq:sine_linearization}}{\leq}  10(\sqrt{\mathfrak{M}_2})^2 \cdot \frac{\theta}{\sqrt{\mathfrak{m}_2}} \cdot \Delta^\star_{\theta; (x,v)}
= \beta \cdot \sqrt{\mathfrak{M}_2} \cdot \theta \cdot \Delta^\star_{\theta; (x,v)}.
\ee
\end{proof}

We conclude with a bound on the total computational effort that is required to sample approximately from the uniform distribution on $\partial \mathcal{K}$ with error less than any fixed $\epsilon > 0$:

\begin{thm}\label{thm:mixing_convex}
Fix $\epsilon > 0$, let $\alpha, \beta$ be defined as in Lemma \ref{thm:approx_convex}, and let $I(\epsilon), \theta(\epsilon)$ be defined as in Equation \ref{EqThetaIDef}. Let $\{X_{i}^{\theta(\epsilon)}\}_{i \in \mathbb{N}}$ be the Markov chain defined in Equation \ref{EqDefGeodesicWalkApprox}, with the oracle $\bigstar$ provided by Algorithm \ref{alg:Approx_Convex}. Then the distribution $\Pi_{I(\epsilon)+1}^{\theta(\epsilon)}$ of $X_{I(\epsilon)+1}^{\theta(\epsilon)}$ satisfies 
\begin{equation} \label{IneqFinalSilly1}
W_{d}(\Pi_{I(\epsilon)+1}^{\theta(\epsilon)}, \Pi) \leq \epsilon,
\end{equation}
and the intersection oracle  and normal vector oracle in Algorithm \ref{alg:Approx_Convex} are called no more than
\begin{equation} \label{IneqFinalSilly2}
N(\epsilon) = \mathcal{O}\bigg( (\frac{\mathfrak{M}_{2}}{\mathfrak{m}_{2}})^3  \times \frac{\log(\frac{2\uppi}{\epsilon\sqrt{\mathfrak{m}_2}})}{\epsilon \cdot \sqrt{\mathfrak{M}_2}} \bigg)
\end{equation}
times.

\end{thm}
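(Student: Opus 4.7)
My plan is to reduce the theorem to a direct substitution into Theorem \ref{thm:e1}, after first verifying that the oracle produced by Algorithm \ref{alg:Approx_Convex} is admissible. Specifically, Lemma \ref{thm:approx_convex} already establishes the two required properties of the oracle: it satisfies Assumption \ref{assumption:accuracy} with explicit constants $\alpha = 2\uppi (\mathfrak{M}_2/\mathfrak{m}_2)$ and $\beta = 5\sqrt{\mathfrak{M}_2}/\sqrt{\mathfrak{m}_2}$, and its step size satisfies Assumption \ref{assumption:polynomial} via the sandwich $\theta/(2\sqrt{\mathfrak{M}_2}) \leq \Delta^\star_{\theta;(x,v)} \leq \theta/\sqrt{\mathfrak{m}_2}$. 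Together with the hypothesis $\beta \sqrt{\mathfrak{M}_2}\theta \Delta^\star_{\theta;(x,v)} < 1$, which we must confirm holds for the choice $\theta = \theta(\epsilon)$ (this is automatic for $\epsilon$ small, since $\theta(\epsilon) \to 0$ as $\epsilon \to 0$), these are exactly the hypotheses of Theorem \ref{thm:e1}. So Inequality \ref{IneqFinalSilly1} follows immediately from the first conclusion of Theorem \ref{thm:e1}.

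The remaining work is to massage the oracle-call bound from Theorem \ref{thm:e1}, namely
\[
N(\epsilon) \leq \frac{\log(\epsilon\sqrt{\mathfrak{m}_2}/(2\uppi))}{\log(\cos(\uppi\sqrt{\mathfrak{m}_2}/(2\sqrt{\mathfrak{M}_2})))} \times \Big(\Big\lceil \frac{\uppi}{2\sqrt{\mathfrak{M}_2}\,\Delta_{\theta(\epsilon);\mathrm{min}}}\Big\rceil + \mathcal{O}\bigl(\log(\sqrt{\mathfrak{M}_2}/\sqrt{\mathfrak{m}_2})\bigr)\Big),
\]
into the form \ref{IneqFinalSilly2}. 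Using the Taylor expansion $-\log(\cos u) \asymp u^2/2$ for small $u$, the first factor, $I(\epsilon)$, is $\mathcal{O}\bigl((\mathfrak{M}_2/\mathfrak{m}_2)\cdot \log(2\uppi/(\epsilon\sqrt{\mathfrak{m}_2}))\bigr)$. For the second factor I invoke the lower bound $\Delta_{\theta(\epsilon);\mathrm{min}} \geq \theta(\epsilon)/(2\sqrt{\mathfrak{M}_2})$ from Lemma \ref{thm:approx_convex}, so $\uppi/(2\sqrt{\mathfrak{M}_2}\Delta_{\theta(\epsilon);\mathrm{min}}) \leq \uppi/\theta(\epsilon)$.

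The quantity $\theta(\epsilon)$ from Equation \ref{EqThetaIDef} must then be expanded in the present setting. Since $\alpha = 2\uppi\mathfrak{M}_2/\mathfrak{m}_2$ dominates both $1$ and $(\uppi/2)\beta = (5\uppi/2)\sqrt{\mathfrak{M}_2/\mathfrak{m}_2}$, the denominator $1 + (\uppi/2)\alpha + (\uppi/2)^2\beta$ is $\Theta(\mathfrak{M}_2/\mathfrak{m}_2)$. The numerator term $1 - \cos(\uppi\sqrt{\mathfrak{m}_2}/(2\sqrt{\mathfrak{M}_2}))$ is $\Theta(\mathfrak{m}_2/\mathfrak{M}_2)$ again by Taylor expansion. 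Combining, $\theta(\epsilon) = \Theta(\epsilon \sqrt{\mathfrak{M}_2} \cdot (\mathfrak{m}_2/\mathfrak{M}_2)^2)$, hence $\uppi/\theta(\epsilon) = \mathcal{O}((\mathfrak{M}_2/\mathfrak{m}_2)^2/(\epsilon\sqrt{\mathfrak{M}_2}))$, which absorbs the additive $\mathcal{O}(\log(\sqrt{\mathfrak{M}_2/\mathfrak{m}_2}))$ term.

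Multiplying the two estimates then yields
\[
N(\epsilon) = \mathcal{O}\Big(\frac{\mathfrak{M}_2}{\mathfrak{m}_2}\cdot \log\tfrac{2\uppi}{\epsilon\sqrt{\mathfrak{m}_2}} \cdot \frac{(\mathfrak{M}_2/\mathfrak{m}_2)^2}{\epsilon\sqrt{\mathfrak{M}_2}}\Big) = \mathcal{O}\Big((\tfrac{\mathfrak{M}_2}{\mathfrak{m}_2})^3 \cdot \tfrac{\log(2\uppi/(\epsilon\sqrt{\mathfrak{m}_2}))}{\epsilon\sqrt{\mathfrak{M}_2}}\Big),
\]
which is exactly Inequality \ref{IneqFinalSilly2}. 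Since each oracle call of $\bigstar$ in Algorithm \ref{alg:Approx_Convex} costs exactly one call to the intersection oracle (together with a single normal-vector call to build the line $\ell$), the bound on $\bigstar$-calls translates directly into the same asymptotic bound on the underlying intersection and normal-vector oracle calls, finishing the proof. The only non-routine part is verifying which of the three summands $\{1,(\uppi/2)\alpha,(\uppi/2)^2\beta\}$ is asymptotically dominant, but this is a straightforward comparison of $\mathfrak{M}_2/\mathfrak{m}_2$ against $\sqrt{\mathfrak{M}_2/\mathfrak{m}_2}$, and I do not anticipate any obstacle beyond bookkeeping.
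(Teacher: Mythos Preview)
Your proposal is correct and follows essentially the same route as the paper: invoke Lemma \ref{thm:approx_convex} to verify the oracle assumptions, apply Theorem \ref{thm:e1} (the paper cites Lemma \ref{IneqNumStepsAppr} for \eqref{IneqFinalSilly1}, but that is the same content), substitute $\Delta_{\theta(\epsilon);\mathrm{min}} \geq \theta(\epsilon)/(2\sqrt{\mathfrak{M}_2})$, and then expand $\theta(\epsilon)$ using the explicit values of $\alpha$ and $\beta$ together with the Taylor approximation of $1-\cos$ and $\log\cos$. Your explicit check that $(\uppi/2)\alpha$ dominates the denominator and your remark translating $\bigstar$-calls into intersection/normal oracle calls are minor clarifications the paper leaves implicit, but otherwise the arguments coincide.
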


\begin{proof}

Inequality \ref{IneqFinalSilly1} is an application of Lemma \ref{IneqNumStepsAppr}. To prove Inequality \ref{IneqFinalSilly2}, by Theorem \ref{thm:e1}, 
\be
N(\epsilon) &\leq \frac{\log(\frac{\epsilon \, \sqrt{\mathfrak{m}_2} }{2 \uppi})}{\log(\cos(\frac{\uppi \sqrt{\mathfrak{m}_2}}{2 \sqrt{\mathfrak{M}_2}}))} \times \bigg[ \lceil \frac{2\uppi \, \sqrt{\mathfrak{M}_2}}{2\sqrt{\mathfrak{M}_2} \, \theta(\epsilon)} \rceil + \mathcal{O}(\log(\frac{\mathfrak{M}_2}{\mathfrak{m}_2}\times \frac{\sqrt{\mathfrak{M}_2}}{\theta(\epsilon)})) \bigg]  \\
&= \mathcal{O}\bigg( \frac{  \log(\frac{\epsilon \, \sqrt{\mathfrak{m}_2} }{2 \uppi}) }{\log(\cos(\frac{\uppi \sqrt{\mathfrak{m}_2}}{2 \sqrt{\mathfrak{M}_2}}))} \times \bigg[ \frac{1}{\epsilon \cdot \sqrt{\mathfrak{M}_2}  (1 - \cos(\frac{\uppi \sqrt{\mathfrak{m}_2}}{2 \sqrt{\mathfrak{M}_2}})) } \times \big[1 + \frac{\uppi}{2}\alpha + (\frac{\uppi}{2})^2\beta\big] \bigg]\bigg) \\
&= \mathcal{O}\bigg( \frac{  \log(\frac{\epsilon \, \sqrt{\mathfrak{m}_2} }{2 \uppi}) }{\log(\cos(\frac{\uppi \sqrt{\mathfrak{m}_2}}{2 \sqrt{\mathfrak{M}_2}}))} \times\\
& \bigg[ \frac{1}{\epsilon \cdot \sqrt{\mathfrak{M}_2}  (1 - \cos(\frac{\uppi \sqrt{\mathfrak{m}_2}}{2 \sqrt{\mathfrak{M}_2}})) } \times \big[1 + \frac{\uppi}{2} \cdot 5\uppi (\frac{\sqrt{\mathfrak{M}_2}}{\sqrt{\mathfrak{m_2}}})^2 + (\frac{\uppi}{2})^2\cdot 10 \frac{\sqrt{\mathfrak{M}_2}}{\sqrt{\mathfrak{m}_2}}\big] \bigg] \bigg) \\
&= \mathcal{O}\bigg( \frac{  \log(\frac{\epsilon \, \sqrt{\mathfrak{m}_2} }{2 \uppi}) }{\log(\cos(\frac{\uppi \sqrt{\mathfrak{m}_2}}{2 \sqrt{\mathfrak{M}_2}}))} \times \frac{1 + \frac{\uppi}{2} \cdot 5\uppi (\frac{\sqrt{\mathfrak{M}_2}}{\sqrt{\mathfrak{m_2}}})^2 + (\frac{\uppi}{2})^2\cdot 10\frac{\sqrt{\mathfrak{M}_2}}{\sqrt{\mathfrak{m}_2}}}{\epsilon \cdot \sqrt{\mathfrak{M}_2}  (1 - \cos(\frac{\uppi \sqrt{\mathfrak{m}_2}}{2 \sqrt{\mathfrak{M}_2}})) } \bigg)\\
&= \mathcal{O}\bigg( (\frac{\mathfrak{M}_2}{\mathfrak{m}_2})^3  \times \frac{\log(\frac{2 \uppi}{\epsilon \, \sqrt{\mathfrak{m}_2} })}{\epsilon \cdot \sqrt{\mathfrak{M}_2}} \bigg),
\ee
where the first inequality is due to Inequality \ref{IneqPenConc2} and Lemma \ref{thm:approx_convex}, and the remaining lines are obtained by the definitions of $\theta(\epsilon)$, $\alpha$ and $\beta$.
\end{proof}

\begin{rem}\label{rem:smooth}
If the smoothness of the curvature of $\mathcal{M}$ is uniformly bounded below (i.e., if the derivative of the curvature is uniformly bounded above), we can replace the ratio $\frac{\mathfrak{M}_2}{\mathfrak{m}_2}$ appearing in Lemmas \ref{thm:chord} and \ref{thm:approx_convex} by a constant $\mathfrak{M}_3$ that depends only on a lower bound on the smoothness of the curvature, not on the curvature itself, so that we get $\alpha = 2\uppi \mathfrak{M}_3$ and $\beta = \sqrt{\mathfrak{M}_3}$.  The number of intersection and normal line oracle calls is then $\mathcal{O}^*( (\frac{\mathfrak{M}_2}{\mathfrak{m}_2})^2 \cdot \mathfrak{M}_3 \cdot (\epsilon \, \sqrt{\mathfrak{M}_2})^{-1})$, which is quadratic in the curvature ratio $\frac{\mathfrak{M}_2}{\mathfrak{m}_2}$. \end{rem}

\section{Discussion and Future Work} \label{sec:discussion}

We give informal discussions of three natural questions left open by this paper.

\subsection{Sampling from convex polytopes with the geodesic walk}\label{sec:discussion_polytope} In this paper, we presented the geodesic walk, gave bounds on its mixing properties for general manifolds $\mathcal{M}$ with bounded positive curvature, and analyzed a simple efficient implementation of the walk in the special case that $\mathcal{M} = \partial \mathcal{K}$ is the boundary of a convex set $\mathcal{K}$. However, it is clear that our bounds can be very poor for some natural manifolds, even in this special case. In this section, we discuss a natural open question: how strong is the assumption that $\partial \mathcal{K}$ has bounded positive curvature, and how can this assumption be weakened in practice?

We first note that the assumption of \textit{nonnegative} curvature is very weak. Recall that the \textit{Alexandrov curvature} is a generalization of the usual notion of sectional curvature to more general metric spaces, and that the boundary of a convex body in $\mathbb{R}^{d}$ always has \textit{nonnegative} Alexandrov curvature (see \textit{e.g.} \cite{alexandrov2005convex} for a definition of Alexandrov curvature and survey of relevant results). However, the assumption that the curvature is \textit{bounded} is much stronger: it is straightforward to check that the boundary of any polytope will have Alexandrov curvature that is not bounded away from either 0 or $+ \infty$.

Fortunately, it is possible to obtain efficient samples from the boundaries of convex bodies that do not satisfy our assumptions by using appropriate pre- and post-processing steps. We now give an algorithm that allows us to replace the task of sampling from the boundary of a convex body $\partial \mathcal{K}$ with the task of sampling from a related ``rounded" body $\partial \mathcal{K}'$ whose sectional curvature is bounded from above by $\mathcal{O}(d^{2})$. The construction is based on essentially the same intuition as the pre- and post-processing described in Section 5 of \cite{vempala2005geometric} for sampling from the interior of a convex set, though the required analysis is slightly more delicate. We first require some simple definitions:

\begin{defn}[$\epsilon$-Thickening and Projection]
For fixed $\epsilon > 0$ and convex body $\mathcal{K} \subset \mathbb{R}^{d}$, we define $\mathcal{K}_{\epsilon} = \{ x \in \mathbb{R}^{d} \, : \, \inf_{y \in \mathcal{K}} \| x - y \| \leq \epsilon \}$ and $\mathcal{M}_{\epsilon} = \partial \mathcal{K}_{\epsilon}$. 

We define the projection-like map $f_{\epsilon} \, : \, \mathcal{M}_{\epsilon} \mapsto \mathcal{M} \equiv \partial \mathcal{K}$ by 
\be 
f_{\epsilon}(x) = \mathrm{argmin}_{y \in \mathcal{M}} \| x - y \|.
\ee 
\end{defn}

We define $H_{\epsilon}(x)$ to be the magnitude of the determinant of the Hessian of $f_{\epsilon}$ at $x$, when it exists. We note:

\begin{lemma} \label{LemmaHessianUpper}
With notation as above,
\be 
0 \leq H_{\epsilon}(x) \leq 1
\ee 
for all $x \in \mathcal{M}_{\epsilon}$.
\end{lemma}

\begin{proof}
This is an immediate consequence of the fact that $f_{\epsilon}$ is a contraction; see \cite{58556} for a proof of this fact.
\end{proof}

As a consequence of this lemma, for all $\epsilon > 0$ the following is a valid rejection-sampling algorithm for sampling from $\mathcal{M}$ if you know how to sample from $\mathcal{M}_{\epsilon}$:

\begin{algorithm}[H]
\caption{Simple Rejection Sampling Algorithm \label{AlgRej}}
\flushleft
\textbf{parameters}: Convex set $\mathcal{K}$ and parameters $\epsilon > 0$, $N \in \mathbb{N}$.\\
\textbf{output:} Samples $X_{1},\ldots,X_{N}$ from the uniform measure on $\mathcal{M} \equiv \partial \mathcal{K}$.
\begin{algorithmic}[1]
\For{$i = 1, 2, \ldots, N$}
\\ Sample $Z_{i} \sim \mathrm{Unif}(\mathcal{M}_{\epsilon})$. 
\\ Propose $Y_{i} = f_{\epsilon}(Z_{i})$. 
\\ Accept the sample $Y_{i}$ and set $X_{i} = Y_{i}$ with probability $H_{\epsilon}(Z_{i})$. Otherwise, reject the sample and go back to Step 2. 
\EndFor
\end{algorithmic}
\end{algorithm}

The rejection rate of this algorithm is given by
\be \label{EqRejectP1}
\mathbb{P}[\mathrm{reject}] = \frac{\mathrm{Area}(\mathcal{M})}{\mathrm{Area}(\mathcal{M}_{\epsilon})}.
\ee 

By Crofton's formula, for $\epsilon = \frac{1}{20 \, d}$ we have 
\be \label{EqRejectP2}
\mathrm{Area}(\mathcal{M}_{\epsilon}) \leq \mathrm{Area}(\mathcal{M}) + \frac{1}{2}.
\ee 

Assuming without loss of generality that $\mathrm{Area}(\mathcal{M}) \geq 1$ after appropriate rescaling and choosing $\epsilon = \frac{1}{20d}$, we have by formulas \eqref{EqRejectP1} and \eqref{EqRejectP2} that the rejection probability of Algorithm \ref{AlgRej} is uniformly bounded away from 1:
\be \label{EqRejectP3}
\mathbb{P}[\mathrm{reject}] \leq \frac{2}{3}.
\ee 
Thus, in order to be able to sample from the uniform distribution on $\mathcal{M}$, it is enough to be able to sample from the uniform distribution on  $\mathcal{M}_{\epsilon}$ as long as $\epsilon < \frac{1}{20d}$. We have gained something important in this replacement: while $\mathcal{M}$ may have unbounded sectional curvature, the sectional curvature of $\mathcal{M}_{\epsilon}$ is at most $\epsilon^{-2}$. To see this last fact, consider a point $x \in \mathcal{M}_\epsilon$ and let $y \in \mathcal{M}$ satisfy $\| x - y \| = \epsilon$ (such a nearest point exists by the definition of $\mathcal{M}_{\epsilon}$). Then the ball $B_{\epsilon}(y)$ of radius $\epsilon$ centered at $y$ is entirely contained in the interior of $\mathcal{M}_\epsilon$, and $x$ is on the boundary of both $B_{\epsilon}(y)$ and $\mathcal{M}_{\epsilon}$.  Therefore, the radius of curvature of $\mathcal{M}_{\epsilon}$ at $x$ must be at least $\epsilon$ (recalling that $\mathcal{M}_\epsilon$ has nonnegative curvature because $\mathcal{M}$ is the boundary of a convex body), and hence the sectional curvature at $x$ must be at most $\epsilon^{-2}$. Since this applies for every point $x \in \mathcal{M}_{\epsilon}$, we conclude that the sectional curvature of $\mathcal{M}_{\epsilon}$ is at most $\epsilon^{-2}$.

Inequality \eqref{EqRejectP3} allows us to conclude that, using Algorithm \ref{AlgRej}, we can always transform the problem of sampling uniformly from a manifold $\mathcal{M}$ with the problem of sampling uniformly from a slightly perturbed manifold $\mathcal{M}_{\epsilon}$ with bounded sectional curvature $\sup_{x \in \mathcal{M}_{\epsilon}} \sup_{u,v \in \mathcal{T}_{x} \mathcal{M}_{\epsilon}} C_{x}(u,v) = \mathcal{O}(d^{2})$.

Further discussion of pre- and post-processing is out of the scope of this paper. We leave two related questions for future work:

\begin{enumerate}
\item Is there another perturbation of the manifold that guarantees an even smaller upper bound on the sectional curvature?
\item Is there another perturbation of the manifold that guarantees a useful \textit{lower} bound on the sectional curvature?
\end{enumerate}

\subsection{Sectional or Ricci Curvature}

Readers familiar with previous work on mixing for walks on manifolds with ``positive curvature," such as \cite{ollivier2009ricci}, may be surprised that our main conditions are stated in terms of the \textit{sectional} curvature rather than the \textit{Ricci} curvature. Roughly speaking, a manifold has positive sectional curvature if it is positively curved ``in all directions," while it has positive Ricci curvature under the weaker condition that it is positively curved ``on average." It is natural to ask: are the main results of the paper true if we replace our bound on the sectional curvature with the weaker analogous assumption on the Ricci curvature?

If we consider Algorithm \ref{alg:Geodesic_Walk} for very small integration time $T$, the answer is yes. Indeed, it is straightforward to check that, in the limit as $T$ goes to 0,  Algorithm \ref{alg:Geodesic_Walk} has the same contraction bound as the ball walk studied in Example 4 of \cite{ollivier2009ricci}. However, choosing $T$ to be very small will generally result in a very inefficient algorithm; it would be much more useful if we could obtain a similar result for $T$ large.

For longer integration times, we do not know the answer to the question in any generality, but mention that some positive results are possible using existing (but more advanced) results from differential geometry. For example, we could find an ``averaged" version of the contraction result in Theorem \ref{thm:contraction} if we could replace references to Rauch's theorem to an analogue that assumed only a positive Ricci curvature, rather than the stronger assumption of positive sectional curvature.  There has been substantial research on finding such analogues, under the name of ``comparison geometry" (see the survey \cite{zhu1997comparison}). We don't summarize these results here, except to note that there exist some positive results that would apply to our algorithm, including those in \cite{dai1995comparison}.

\subsection{Faster mixing time bounds for Hamiltonian Monte Carlo} \label{sec:discussion_HMC} Hamiltonian Monte Carlo (HMC), a Markov chain algorithm that is in many ways analogous to the geodesic walk, is popular in applications such as statistics and machine learning where one wishes to sample from a target distribution $\pi:\mathbb{R}^d \rightarrow \mathbb{R}^+$.  The HMC Markov chain proceeds on $\mathbb{R}^d$ in a similar way as the geodesic walk on a manifold $\mathcal{M}$, except that the geodesic trajectories are replaced by the trajectory of a particle moving according to the laws of classical mechanics in a potential well with potential $U = -\log(\pi)$.  When $\pi$ is a high-dimensional product measure, most HMC trajectories are closely approximated by a geodesic on one of the level sets of $\pi$, due to the concentration of measure phenomenon.  In a similar vein, the paper \cite{seiler2014positive} uses concentration of measure to approximate HMC with geodesic trajectories on a related manifold, called the Jacobi manifold.

Using a different approach, the paper \cite{HMC_logconcave} obtained the best current mixing time bound $\mathcal{O}^*((\frac{M_2}{m_2})^2)$ for HMC on strongly logconcave distributions $\pi$, when the eigenvalues of the Hessian matrix of $\pi$ are uniformly bounded above and below by positive constants $M_2$ and $m_2$, respectively.  As each level set $S$ of $\pi$ has sectional curvature ratio $\frac{\mathfrak{M}_2(S)}{\mathfrak{m}_2(S)} \leq \frac{M_2}{m_2}$, where $\mathfrak{M}_2(S)$ and $\mathfrak{m}_2(S)$ are, respectively, the maximum and minimum values of the sectional curvature of $S$, the mixing time bound $\mathcal{O}^*((\frac{M_2}{m_2})^2)$ obtained in \cite{HMC_logconcave} for HMC on the distribution $\pi$ is slower than the mixing time bound of $\mathcal{O}^*(\frac{\mathfrak{M}_2}{\mathfrak{m}_2})$ we obtained in this paper for the geodesic walk on any of the level sets $S$ of $\pi$.

Since HMC trajectories closely approximate geodesic paths, we conjecture that one should be able to obtain a stronger mixing time bound of $\mathcal{O}^*(\frac{M_2}{m_2})$ for HMC that matches the mixing time bound we obtained in this paper for the geodesic walk on any of the level sets $S$ of $\pi$.  We leave as an open problem whether the methods used in this paper to bound the mixing time of the geodesic walk can be extended to obtain the conjectured mixing time bound of $\mathcal{O}^*(\frac{M_2}{m_2})$ for HMC.

\section{Acknowledgements}
We are grateful to Yin Tat Lee and A.B. Dieker for helpful discussions.  We also thank the anonymous reviewer for helpful comments and suggestions.
\bibliographystyle{plain}
\bibliography{Geodesic}

\end{document}